\begin{document}

\def\cF{\mathcal{F}}
\def\cG{\mathcal{G}}
\def\cH{\mathcal{H}}
\def\cO{\mathcal{O}}
\def\cS{\mathcal{S}}
\def\cX{\mathcal{X}}
\def\cY{\mathcal{Y}}
\def\defeq{\vcentcolon=}
\def\id{\rm id}

\newcommand{\myStep}[1]{\noindent {\bf Step #1}.}

\newcommand{\removableFootnote}[1]{}

\newtheorem{theorem}{Theorem}[section]
\newtheorem{lemma}[theorem]{Lemma}
\newtheorem{proposition}[theorem]{Proposition}

\theoremstyle{definition}
\newtheorem{definition}{Definition}[section]

\title{
The structure of mode-locking regions of piecewise-linear continuous maps: II. Skew sawtooth maps.
}
\author{
D.J.W.~Simpson\\\\
Institute of Fundamental Sciences\\
Massey University\\
Palmerston North\\
New Zealand
}
\maketitle

\begin{abstract}

In two-parameter bifurcation diagrams of piecewise-linear continuous maps on $\mathbb{R}^N$, mode-locking regions typically have points of zero width known as shrinking points. Near any shrinking point, but outside the associated mode-locking region, a significant proportion of parameter space can be usefully partitioned into a two-dimensional array of nearly-hyperbolic annular sectors. The purpose of this paper is to show that in these sectors the dynamics is well-approximated by a three-parameter family of skew sawtooth circle maps, where the relationship between the skew sawtooth maps and the $N$-dimensional map is fixed within each sector. The skew sawtooth maps are continuous, degree-one, and piecewise-linear, with two different slopes. They approximate the stable dynamics of the $N$-dimensional map with an error that goes to zero with the distance from the shrinking point. The results explain the complicated radial pattern of periodic, quasi-periodic, and chaotic dynamics that occurs near shrinking points.

\end{abstract}


\section{Introduction}
\label{sec:intro}
\setcounter{equation}{0}

This paper extends the study of \cite{Si17c} for
piecewise-linear maps on $\mathbb{R}^N$ ($N \ge 2$) that are continuous and involve two pieces.
For such maps, coordinates $x \in \mathbb{R}^N$ can be chosen such that the
hyperplane along which the two pieces are joined, termed the switching manifold,
is simply where the first coordinate of $x$ vanishes.
Using $s$ to denote the first coordinate of $x$, i.e.
\begin{equation}
s \defeq e_1^{\sf T} x \;,
\label{eq:s}
\end{equation}
such a map takes the form
\begin{equation}
x_{i+1} = f(x_i;\xi) \defeq
\begin{cases}
A_L(\xi) x_i + B(\xi), & s_i \le 0 \;, \\
A_R(\xi) x_i + B(\xi), & s_i \ge 0 \;,
\end{cases}
\label{eq:f}
\end{equation}
where $A_L$ and $A_R$ are real-valued $N \times N$ matrices and $B \in \mathbb{R}^N$.
It is assumed that $A_L$, $A_R$, and $B$ have a $C^K$ ($K \ge 2$) dependency on a parameter $\xi \in \mathbb{R}^M$ ($M \ge 2$).
The assumption that $f$ is continuous on $s=0$
implies $A_R = A_L + C e_1^{\sf T}$
for some $C \in \mathbb{R}^N$.

Maps of the form (\ref{eq:f}) describe the dynamics
near border-collision bifurcations of piecewise-smooth maps \cite{DiBu08,Si16}.
Piecewise-smooth maps arise as return maps for
nonsmooth differential equations which serve as useful mathematical models of phenomena
with events that are discontinuous or at least fast relative to the usual motion of the system.
Examples of such systems exhibiting border-collision bifurcations include
neurons with square wave forcing \cite{Ti02},
mechanical oscillators with friction \cite{DiKo03,KoPi08},
and DC/DC power converters \cite{DiBu98,ZhMo03}.
Maps of the form (\ref{eq:f}) also arise as models of discrete-time phenomena involving a switching element,
such as trade cycle models with non-negativity constraints \cite{PuSu06}.

To illustrate the ideas of this paper, consider
\begin{equation}
A_L = \begin{bmatrix}
\tau_L & 1 & 0 \\
-\sigma_L & 0 & 1 \\
\delta_L & 0 & 0
\end{bmatrix}, \qquad
A_R = \begin{bmatrix}
\tau_R & 1 & 0 \\
-\sigma_R & 0 & 1 \\
\delta_R & 0 & 0
\end{bmatrix}, \qquad
B = \begin{bmatrix}
\mu \\ 0 \\ 0
\end{bmatrix},
\label{eq:ALAREx20}
\end{equation}
where $\xi = (\tau_L,\sigma_L,\delta_L,\tau_R,\sigma_R,\delta_R,\mu) \in \mathbb{R}^7$.
The map (\ref{eq:f}) with (\ref{eq:ALAREx20}) is the
border-collision normal form in three dimensions \cite{Di03}. 
Fig.~\ref{fig:modeLockEx20} shows mode-locking regions of (\ref{eq:f}) with (\ref{eq:ALAREx20}) and
\begin{equation}
\tau_L = 0 \;, \qquad
\sigma_L = -1 \;, \qquad
\sigma_R = 0 \;, \qquad
\delta_R = 2 \;, \qquad
\mu = 1 \;.
\label{eq:paramModeLockEx20}
\end{equation}
Each coloured region is a mode-locking region where (\ref{eq:f})
has an attracting periodic solution of a fixed rotation number.
The regions are roughly ordered by rotation number and are more narrow for higher periods.

Unlike mode-locking regions of smooth maps, 
the mode-locking regions in Fig.~\ref{fig:modeLockEx20} have points of zero width.
These are termed shrinking points and have been described in a wide variety of mathematical models
that can be put in the form (\ref{eq:f}),
or are at least well-approximated by a map of form (\ref{eq:f}),
see for instance \cite{Ti02,SuGa04,LaMo06,ZhMo08b,SzOs09}.
In a mode-locking region, as we cross a shrinking point the number of points that the
corresponding periodic solution has on each side of the switching manifold changes by one.
In a neighbourhood of a shrinking point, the mode-locking region is bounded by four curves
along which an $\cS$-cycle (periodic solution with symbolic itinerary $\cS \in \{ L,R \}^{\mathbb{Z}}$)
has one point on the switching manifold for a particular sequence $\cS$ \cite{SiMe09},
and the shrinking point is referred to as an $\cS$-shrinking point.

\begin{figure}[t!]
\begin{center}
\setlength{\unitlength}{1cm}
\begin{picture}(15,5.2)
\put(0,0){\includegraphics[height=5cm]{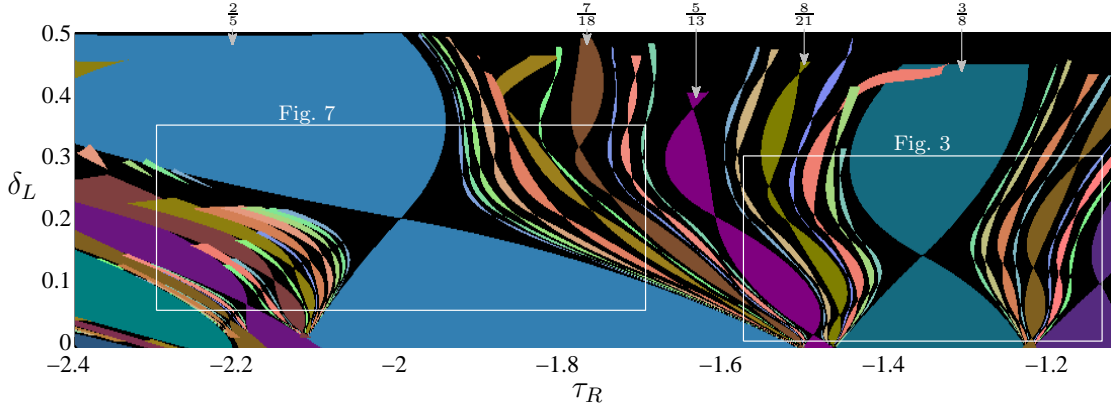}}
\put(7.5,0){$\tau_R$}
\put(0,2.7){$\delta_L$}
\put(2.9,5.04){\tiny $\frac{2}{5}$}
\put(7.54,5.04){\tiny $\frac{7}{18}$}
\put(8.99,5.04){\tiny $\frac{5}{13}$}
\put(10.43,5.04){\tiny $\frac{8}{21}$}
\put(12.59,5.04){\tiny $\frac{3}{8}$}
\put(3.6,3.69){\scriptsize \color{white} Fig.~\ref{fig:modeLockApprox20}}
\put(11.78,3.28){\scriptsize \color{white} Fig.~\ref{fig:modeLockApprox50}}
\end{picture}
\caption{
Mode-locking regions of (\ref{eq:f})--(\ref{eq:paramModeLockEx20}).
Only mode-locking regions corresponding to rotational periodic solutions
(defined in \S\ref{sub:shrPoints}) are shown as other mode-locking regions are relatively small,
do not exhibit shrinking points, and require more computation time to obtain to the same degree of accuracy.
This figure was computed by numerically checking the existence and stability of
rotational periodic solutions up to period $50$ on a $1024 \times 256$ grid of $\tau_R$ and $\delta_L$ values.
At grid points where mode-locking regions overlap, the solution of the highest period is indicated.
Five of the mode-locking regions are labelled by their rotation number.
\label{fig:modeLockEx20}
}
\end{center}
\end{figure}

Now consider the dynamics near an $\cS$-shrinking point but outside its corresponding mode-locking region.
Let $\frac{m}{n}$ be the rotation number associated with $\cS$.
As explained in \cite{Si17c}, 
the most dominant nearby mode-locking regions have rotation numbers
$\frac{m_k^-}{n_k^-} = \frac{k m + m^-}{k n + n^-}$ and
$\frac{m_k^+}{n_k^+} = \frac{k m + m^+}{k n + n^+}$,
where $k \in \mathbb{Z}^+$ and
$\frac{m^-}{n^-}$ and $\frac{m^+}{n^+}$ are the left and right Farey roots of $\frac{m}{n}$.
These regions, termed $\cG^\pm_k$-mode-locking regions,
form two sequences that approach the $\cS$-shrinking point from opposite sides as $k \to \infty$\removableFootnote{
Conceivably, one of these sequences of mode-locking regions could be virtual,
but I think this would be a special case (i.e.~higher codimension).
}.
The existence and location of shrinking points on the $\cG^\pm_k$-mode-locking regions is determined by
various scalar quantities associated with the $\cS$-shrinking point.


The results in \cite{Si17c} were obtained
by performing calculations on one-dimensional centre manifolds.
This paper concerns circle maps
that capture an approximate return to a fundamental domain of such a manifold.
The maps are continuous, piecewise-linear, and degree-one.
They are written as\removableFootnote{
Here $w$ has a different definition than in earlier versions.
I like this form because it is defined on $[0,1)$
(rather than an interval containing $0$),
has just one weird quantity ($\frac{a_R-1}{a_R-a_L}$),
and the later formulas for the three parameters $a_L$, $a_R$ and $w$
are about as simple as possible.
}
\begin{equation}
z_{i+1} = g(z_i;a_L,a_R,w) \defeq \begin{cases}
\left( w + a_L \left( z_i - z_{\rm sw} \right) + z_{\rm sw} \right) {\rm \,mod\,} 1 \;,
& 0 \le z_i \le z_{\rm sw} \;, \\
\left( w + a_R \left( z_i - z_{\rm sw} \right) + z_{\rm sw} \right) {\rm \,mod\,} 1 \;,
& z_{\rm sw} \le z_i < 1 \;,
\end{cases}
\label{eq:g}
\end{equation}
where $z \in [0,1)$ is the state variable\removableFootnote{
We do not need consider the case $a_R < 1 < a_L$ in view of a coordinate change that switches $a_L$ and $a_R$:
presumably just $z \mapsto z - z_{\rm sw}$.
}\removableFootnote{
The naive coordinate change
(attempting to move the kink from $z_{\rm sw}$ to $\frac{1}{2}$)
\begin{equation}
z \mapsto \begin{cases}
\frac{(a_R-a_L) z}{2 (a_R-1)} \;, & 0 \le z \le z_{\rm sw} \\
1 + \frac{(z-1)}{2 \left( 1 - z_{\rm sw} \right)} \;, &
z_{\rm sw} \le z < 1
\end{cases} \;,
\end{equation}
is not helpful as the resulting PWL map involves more than two linear pieces.
},
\begin{equation}
a_L < 1 \;, \qquad
a_R > 1 \;, \qquad
w \in \mathbb{R} \;,
\label{eq:skewSawtoothParamRange}
\end{equation}
are parameters, and
\begin{equation}
z_{\rm sw} \defeq \frac{a_R-1}{a_R-a_L} \;.
\label{eq:zKink}
\end{equation}
As shown in Fig.~\ref{fig:cobwebSkewSawtooth},
$a_L$ and $a_R$ are the slopes of (\ref{eq:g})
and $w$ is the vertical displacement (modulo $1$) of (\ref{eq:g}) at the kink $z = z_{\rm sw}$.
Equation (\ref{eq:g}) is termed a {\em skew sawtooth map}
because with $a_L + a_R = 2$ (giving $z_{\rm sw} = \frac{1}{2}$)
it is commonly known as a sawtooth map \cite{YaHa87}.
As a three-parameter family, (\ref{eq:g}) is equivalent to the ``tip maps'' studied in \cite{CaGa96}.

\begin{figure}[t!]
\begin{center}
\setlength{\unitlength}{1cm}
\begin{picture}(6,6)
\put(0,0){\includegraphics[height=6cm]{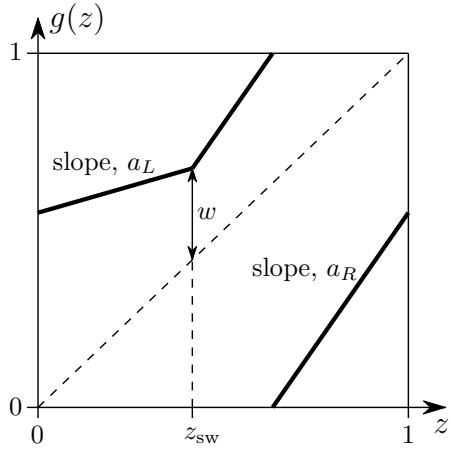}}
\put(5.76,.41){$z$}
\put(.66,5.82){$g(z)$}
\put(.41,.28){\footnotesize $0$}
\put(2.44,.33){\footnotesize $z_{\rm sw}$}
\put(5.34,.28){\footnotesize $1$}
\put(.12,.65){\footnotesize $0$}
\put(.12,5.37){\footnotesize $1$}
\put(2.62,3.26){\footnotesize $w$}
\put(.7,3.92){\footnotesize slope, $a_L$}
\put(3.35,2.5){\footnotesize slope, $a_R$}
\end{picture}
\caption{
A sketch of the skew sawtooth map (\ref{eq:g}).
\label{fig:cobwebSkewSawtooth}
}
\end{center}
\end{figure}



\begin{figure}[t!]
\begin{center}
\setlength{\unitlength}{1cm}
\begin{picture}(14.1,9.5)
\put(0,.3){\includegraphics[height=9cm]{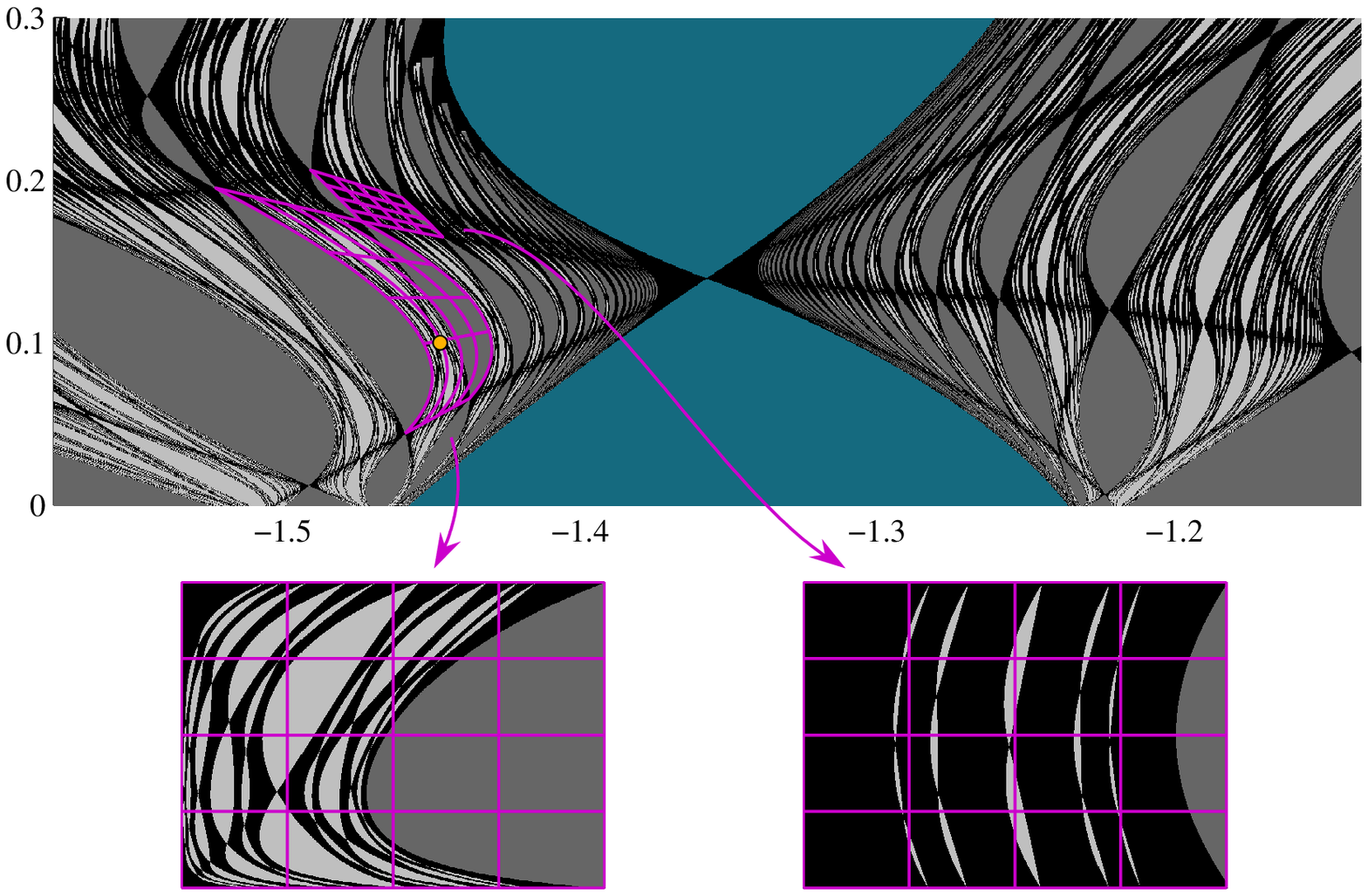}}
\put(7.42,3.76){$\tau_R$}
\put(0,6.5){$\delta_L$}
\put(7.27,9.2){\scriptsize $\frac{3}{8}$}
\put(1.6,9.32){\tiny $\frac{m_2^+}{n_2^+}$}
\put(3.2,9.32){\tiny $\frac{m_3^+}{n_3^+}$}
\put(3.86,9.32){\tiny $\frac{m_4^+}{n_4^+}$}
\put(12.9,9.32){\tiny $\frac{m_2^-}{n_2^-}$}
\put(11.58,9.32){\tiny $\frac{m_3^-}{n_3^-}$}
\put(10.84,9.32){\tiny $\frac{m_4^-}{n_4^-}$}
\put(1.76,0){\scriptsize $\delta \approx 0$}
\put(5.98,0){\scriptsize $\delta \approx \frac{1}{k^2}$}
\put(.88,.32){\scriptsize $\theta \approx \theta_{\rm max}$}
\put(.93,3.38){\scriptsize $\theta \approx \theta_{\rm min}$}
\put(1.3,1.78){\small $\Sigma^+_{2,0}$}
\put(12.74,1.78){\small $\Sigma^+_{3,1}$}
\put(7.93,0){\scriptsize $\delta \approx 0$}
\put(12.15,0){\scriptsize $\delta \approx \frac{1}{k^2}$}
\put(7.05,.32){\scriptsize $\theta \approx \theta_{\rm max}$}
\put(7.1,3.38){\scriptsize $\theta \approx \theta_{\rm min}$}
\end{picture}
\caption{
Mode-locking regions of (\ref{eq:f})--(\ref{eq:paramModeLockEx20})
centred about the $\cS$-shrinking point of Fig.~\ref{fig:modeLockEx20} with $\cS = \cF[3,3,8] = LRRLRRLR$ (here $\frac{m}{n} = \frac{3}{8}$).
In order to show many mode-locking regions (up to period $150$),
the mode-locking regions were approximated by detecting periodicity,
within some tolerance, of the forward orbit of $x = (0,0,0)$
computed for $10^5$ iterates over a $2048 \times 512$ grid of $\tau_R$ and $\delta_L$ values.
The rotation numbers $\frac{m_k^-}{n_k^-}$ and $\frac{m_k^+}{n_k^+}$
are given by (\ref{eq:lmnkpm}) where here $\frac{m^-}{n^-} = \frac{1}{3}$
and $\frac{m^+}{n^+} = \frac{2}{5}$ are the left and right Farey roots of $\frac{m}{n} = \frac{3}{8}$.
The sectors $\Sigma^+_{2,0}$ and $\Sigma^+_{3,1}$ are also shown.
To four decimal places, these admit the approximation (\ref{eq:SigmaApprox}) with
$\theta_{\rm min} = 5.1288$ and $\theta_{\rm max} = 6.1315$ for $\Sigma^+_{2,0}$, and
$\theta_{\rm min} = 4.9844$ and $\theta_{\rm max} = 5.1288$ for $\Sigma^+_{3,1}$.
The coloured dot in $\Sigma^+_{2,0}$ indicates the parameter values of Fig.~\ref{fig:iteratePhi}-A.
\label{fig:modeLockApprox50}
}
\end{center}
\end{figure}

Here it is briefly shown how (\ref{eq:g}) can be used to approximate the dynamics of (\ref{eq:f}) near shrinking points,
with details to be provided in later sections.
In a neighbourhood of an $\cS$-shrinking point, the inner boundaries
of the $\cG^\pm_k$-mode-locking regions are used to define a grid of sectors $\Sigma^\pm_{k,\Delta \ell}$,
where $\Delta \ell \in \mathbb{Z}$.
As an example, Fig.~\ref{fig:modeLockApprox50} shows a magnification of Fig.~\ref{fig:modeLockEx20}
centred at the $\cS$-shrinking point with $\cS = LRRLRRLR$.
Here $\cG^\pm_k$-mode-locking regions are shaded dark grey and all other nearby mode-locking regions are shaded light grey.
Fig.~\ref{fig:modeLockApprox50} also shows two representative sectors, $\Sigma^+_{2,0}$ and $\Sigma^+_{3,1}$,
and indicates the dynamics of (\ref{eq:f}) in these sectors.
Fig.~\ref{fig:sectorsApprox50} shows the approximate location of the sectors
$\Sigma^\pm_{k,\Delta \ell}$ for Fig.~\ref{fig:modeLockApprox50}
as given by truncating an asymptotic expansion in powers of $\frac{1}{k}$ to leading order\removableFootnote{
This was obtained by using Newton's method (in two dimensions) to achieve the coordinate change.

To numerically create the insets of Fig.~\ref{fig:modeLockApprox50}:
\begin{enumerate}
\setlength{\itemsep}{0pt}
\item
I located the $\cG^\pm[k,\Delta \ell]$ and $\cG^\pm[k,\Delta \ell+1]$-shrinking points,
and determined their $\theta$-values.
\item
I created a vector of equally spaced $\theta$-values using those of (i) as end points.
For each value of $\theta$ in this vector, I found the $r$-value
of the mode-locking region boundary connecting the
$\cG^\pm[k,\Delta \ell]$ and $\cG^\pm[k,\Delta \ell+1]$-shrinking points.
\item
I repeated the above two steps for the
$\cG^\pm[k+1,\Delta \ell]$ and $\cG^\pm[k+1,\Delta \ell+1]$-shrinking points.
\item
Note, the two vectors of $\theta$-values are slightly different
(they differ by $\cO \!\left( \frac{1}{k^2} \right)$).
For each corresponding pair of $\theta$-values, and their corresponding $r$-values,
I created a vector of equally spaced $\theta$-values and $r$-values.
This gives me a grid of $(r,\theta)$-values over the sector.
\item
For each point $(r,\theta)$ in this grid,
I computed the corresponding point $(\tau_R,\delta_L)$
(using Newton's method in 2d)
and iterated the map at this point.
\end{enumerate}
}.


\begin{figure}[t!]
\begin{center}
\setlength{\unitlength}{1cm}
\begin{picture}(15,9)
\put(0,0){\includegraphics[height=9cm]{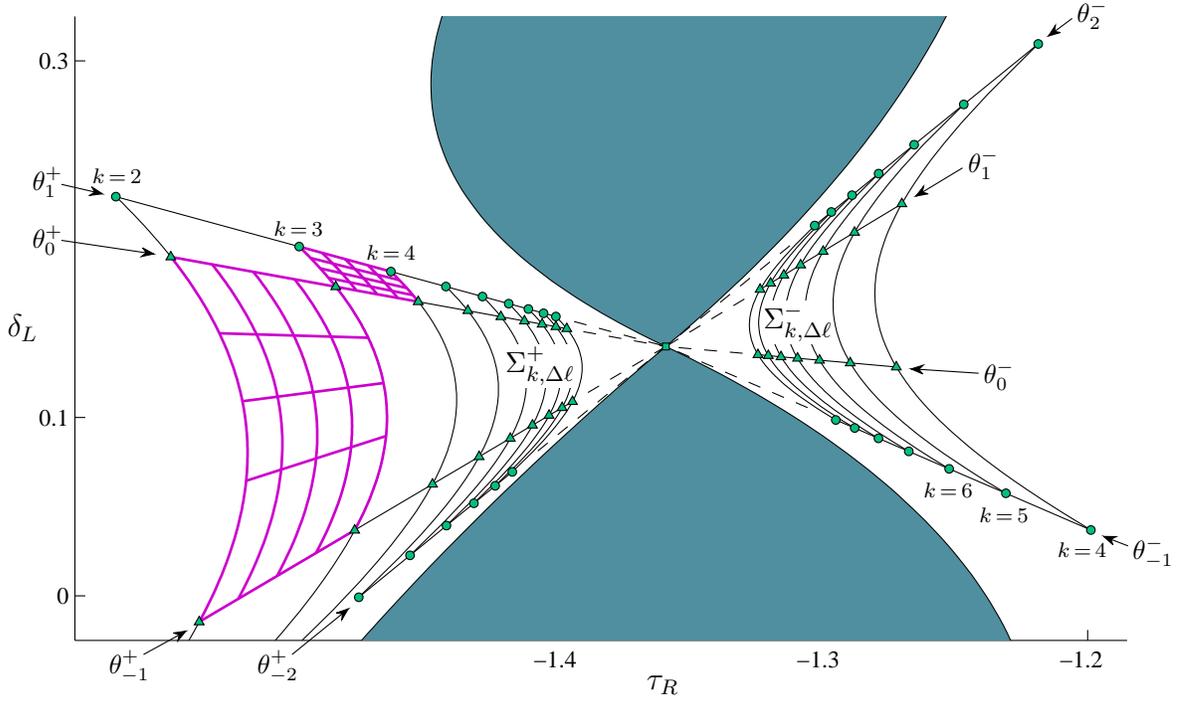}}
\put(8.5,0){$\tau_R$}
\put(0,4.7){$\delta_L$}
\put(6.62,4.13){\small \colorbox{white}{\makebox(.7,.35){$\Sigma^+_{k,\Delta \ell}$}}}
\put(10.05,4.71){\small \colorbox{white}{\makebox(.7,.35){$\Sigma^-_{k,\Delta \ell}$}}}
\put(.33,6.65){\footnotesize $\theta^+_1$}
\put(.33,5.88){\footnotesize $\theta^+_0$}
\put(1.35,.22){\footnotesize $\theta^+_{-1}$}
\put(3.32,.22){\footnotesize $\theta^+_{-2}$}
\put(14.96,1.74){\footnotesize $\theta^-_{-1}$}
\put(12.98,4.08){\footnotesize $\theta^-_0$}
\put(12.77,6.89){\footnotesize $\theta^-_1$}
\put(14.22,8.88){\footnotesize $\theta^-_2$}
\put(1.13,6.74){\scriptsize $k \hspace{-.3mm}=\hspace{-.3mm} 2$}
\put(3.54,6.04){\scriptsize $k \hspace{-.3mm}=\hspace{-.3mm} 3$}
\put(4.77,5.72){\scriptsize $k \hspace{-.3mm}=\hspace{-.3mm} 4$}
\put(13.96,1.76){\scriptsize $k \hspace{-.3mm}=\hspace{-.3mm} 4$}
\put(12.92,2.23){\scriptsize $k \hspace{-.3mm}=\hspace{-.3mm} 5$}
\put(12.18,2.56){\scriptsize $k \hspace{-.3mm}=\hspace{-.3mm} 6$}
\end{picture}
\caption{
The leading order approximation to the location of the sectors $\Sigma^\pm_{k,\Delta \ell}$
for the example shown in Fig.~\ref{fig:modeLockApprox50}.
To four decimal places:
$\theta^-_{-1} = 1.6222$,
$\theta^-_0 = 2.1401$,
$\theta^-_1 = 3.0002$,
$\theta^-_2 = 3.1268$,
$\theta^+_1 = 4.9844$,
$\theta^+_0 = 5.1288$,
$\theta^+_{-1} = 6.1315$,
$\theta^+_{-2} = 6.2705$.
These values provide the bounds for (\ref{eq:SigmaApprox}).
\label{fig:sectorsApprox50}
}
\end{center}
\end{figure}

Within each $\Sigma^\pm_{k,\Delta \ell}$,
$\delta \ge 0$ measures the distance to the outer boundary of
$\Sigma^\pm_{k,\Delta \ell}$ in a radial direction
and $\theta \in [0,2 \pi)$ measures the angle about the $\cS$-shrinking point.
In $(\delta,\theta)$-coordinates the sectors are rectangular in the approximation of Fig.~\ref{fig:sectorsApprox50}:
\begin{equation}
\Sigma^\pm_{k,\Delta \ell} \approx \left\{ (\delta,\theta) ~\middle|~
0 \le \delta \le {\textstyle \frac{1}{k^2}} ,\,
\theta_{\rm min} \le \theta \le \theta_{\rm max} \right\},
\label{eq:SigmaApprox}
\end{equation}
where $\theta_{\rm min}, \theta_{\rm max} \in [0,2 \pi)$ depend only on the value of $\Delta \ell$\removableFootnote{
The point is that we require a mesh that is linear in both $\theta$ and $r$ because
this is what we have done for Fig.~\ref{fig:modeLockSkewSaw50},
as it seems like the most natural thing to do.
I converted the $(\delta,\theta)$-mesh into $(\tau_R,\delta_L)$-coordinates
by using Newton's method in two-dimensions.
}.

For $\Sigma^+_{2,0}$ and $\Sigma^+_{3,1}$,
Fig.~\ref{fig:modeLockSkewSaw50} shows mode-locking regions of (\ref{eq:g}) using
\begin{equation}
a_L = \frac{\tan(\theta)}{\tan \!\left( \theta_{\rm min} \right)} \;, \qquad
a_R = \frac{\tan(\theta)}{\tan \!\left( \theta_{\rm max} \right)} \;, \qquad
w = k^2 \delta \;.
\label{eq:aLaRw}
\end{equation}
Equation (\ref{eq:aLaRw}) represents the appropriate transformation from
$(\delta,\theta)$-coordinates to the parameter space of (\ref{eq:g}) for $\Sigma^+_{2,0}$ and $\Sigma^+_{3,1}$.
By comparing Figs.~\ref{fig:modeLockApprox50} and \ref{fig:modeLockSkewSaw50},
it is evident that the dynamics of (\ref{eq:f}) in $\Sigma^+_{2,0}$ and $\Sigma^+_{3,1}$
matches well to that of (\ref{eq:g}) using (\ref{eq:aLaRw}).
Moreover, the dynamics of (\ref{eq:f}) in each $\Sigma^+_{k,0}$ and $\Sigma^+_{k,1}$
matches that of (\ref{eq:g}) using (\ref{eq:aLaRw})
with an error of order $\cO \!\left( \frac{1}{k} \right)$ and where
one iterate of (\ref{eq:g}) corresponds to roughly $k n$ iterates of (\ref{eq:f}) (here $n = 8$).

The remainder of this paper is organised as follows.
In \S\ref{sec:results}, the sectors
$\Sigma^\pm_{k,\Delta \ell}$ and $(\delta,\theta)$-coordinates are defined more precisely.
The correspondence between (\ref{eq:f}) and (\ref{eq:g}) is clarified and 
used to explain the bifurcation structure of (\ref{eq:f}) near a typical shrinking point.
In \S\ref{sec:approxRecurrent} some important identities for the symbolic
itineraries of periodic solutions in $\cG^\pm$-mode-locking regions are derived
and an attracting one-dimensional centre manifold $W^c$
and a fundamental domain $\Omega_{\Delta \ell} \subset W^c$ are introduced.

In \S\ref{sec:Lambda}, $\Omega_{\Delta \ell}$ is enlarged into an $N$-dimensional set $\Phi$
that forward orbits of (\ref{eq:f}) regularly visit.
The set $\Phi$ is interpreted as a cylinder and topological arguments are used to prove that
the first return map $F : \Phi \to \Phi$
has an attracting invariant set that is homotopic to $\Omega_{\Delta \ell}$.
This shows that (\ref{eq:f}) has an attracting invariant set homotopic to a circle
on which the dynamics is well approximated by (\ref{eq:g}).
In \S\ref{sec:return} the necessary calculations to achieve this approximation are performed,
with the main result given by Theorem \ref{th:main}.
Finally \S\ref{sec:conc} provides concluding remarks.

Throughout this paper, but mostly in proofs, particular places in \cite{Si17c} are referenced.
Some proofs are deferred to Appendix \ref{app:proofs}
and some formulas of \cite{Si17c} are given in Appendix \ref{app:formulas}.
For brevity, Sections \ref{sec:approxRecurrent}--\ref{sec:return} only provide
calculations for sectors $\Sigma^+_{k,\Delta \ell}$ with $\Delta \ell \ge 0$.

\section{Main results}
\label{sec:results}
\setcounter{equation}{0}

In this section we first describe periodic solutions of (\ref{eq:f}), \S\ref{sub:perSolns}.
We then discuss shrinking points and briefly review the notation of \cite{Si17c} in \S\ref{sub:shrPoints}.
In \S\ref{sub:nearby} we describe $\cG^\pm_k$-mode-locking regions and
in \S\ref{sub:sectors} define the sectors $\Sigma^\pm_{k,\Delta \ell}$.
We then detail the correspondence between (\ref{eq:f}) and (\ref{eq:g}) in \S\ref{sub:skewSawtoothApprox}
and finally use this to explain the dynamics of (\ref{eq:f})
near a typical shrinking point in \S\ref{sub:skewSawtoothDyns}.

\subsection{Periodic solutions}
\label{sub:perSolns}

We denote the two components of (\ref{eq:f}) by
\begin{equation}
f^L(x;\xi) \defeq A_L(\xi) x + B(\xi), \qquad
f^R(x;\xi) \defeq A_R(\xi) x + B(\xi).
\label{eq:fLfR}
\end{equation}
Given a periodic symbol sequence $\cS \in \{ L, R \}^{\mathbb{Z}}$ of period $n$, we let 
\begin{equation}
f^{\cS} \defeq f^{\cS_{n-1}} \circ \cdots \circ f^{\cS_0} \;,
\label{eq:fS}
\end{equation}
denote the composition of $f^L$ and $f^R$ in the order specified by $\cS$.
The map $f^{\cS}$ is given by
\begin{equation}
f^{\cS}(x) = M_{\cS} x + P_{\cS} B \;,
\label{eq:fS2}
\end{equation}
where
\begin{align}
M_{\cS} &\defeq A_{\cS_{n-1}} \cdots A_{\cS_0} \;, \label{eq:MS} \\
P_{\cS} &\defeq I + A_{\cS_{n-1}} + A_{\cS_{n-1}} A_{\cS_{n-2}} + \cdots +
A_{\cS_{n-1}} \cdots A_{\cS_1} \;. \label{eq:PS}
\end{align}
An {\em $\cS$-cycle} is defined as an $n$-tuple, $\{ x^{\cS}_i \}_{i=0}^{n-1}$ for which
\begin{equation}
f^{\cS_0} \!\left( x^{\cS}_0 \right) = x^{\cS}_1 \;, \hspace{3mm}
f^{\cS_1} \!\left( x^{\cS}_1 \right) = x^{\cS}_2 \;, \;\ldots\;, \hspace{3mm} 
f^{\cS_{n-1}} \!\left( x^{\cS}_{n-1} \right) = x^{\cS}_0 \;.
\end{equation}
If $s^{\cS}_i \le 0$ for every $i$ for which $\cS_i = L$,
and $s^{\cS}_i \ge 0$ for every $i$ for which $\cS_i = R$,
then each point $x^{\cS}_i$ lies on the side of the switching manifold corresponding to $\cS_i$.
In this case the $\cS$-cycle is a periodic solution of $f$
and we say it is {\em admissible}.
If it is not admissible we say it is {\em virtual}.

Each point $x^{\cS}_i$ is a fixed point of $f^{\cS^{(i)}}$,
where $\cS^{(i)}$ denotes the $i^{\rm th}$ left shift permutation of $\cS$
(i.e.~$\cS^{(i)}_j = \cS_{i+j}$, for all $j \in \mathbb{Z}$).
If $\det \!\left( I-M_{\cS} \right) \ne 0$,
then $x^{\cS}_i$ is unique and given by
\begin{equation}
x^{\cS}_i = \left( I - M_{\cS^{(i)}} \right)^{-1} P_{\cS^{(i)}} B \;.
\end{equation}

\begin{figure}[t!]
\begin{center}
\setlength{\unitlength}{1cm}
\begin{picture}(14.9,5.7)
\put(0,0){\includegraphics[height=5.4cm]{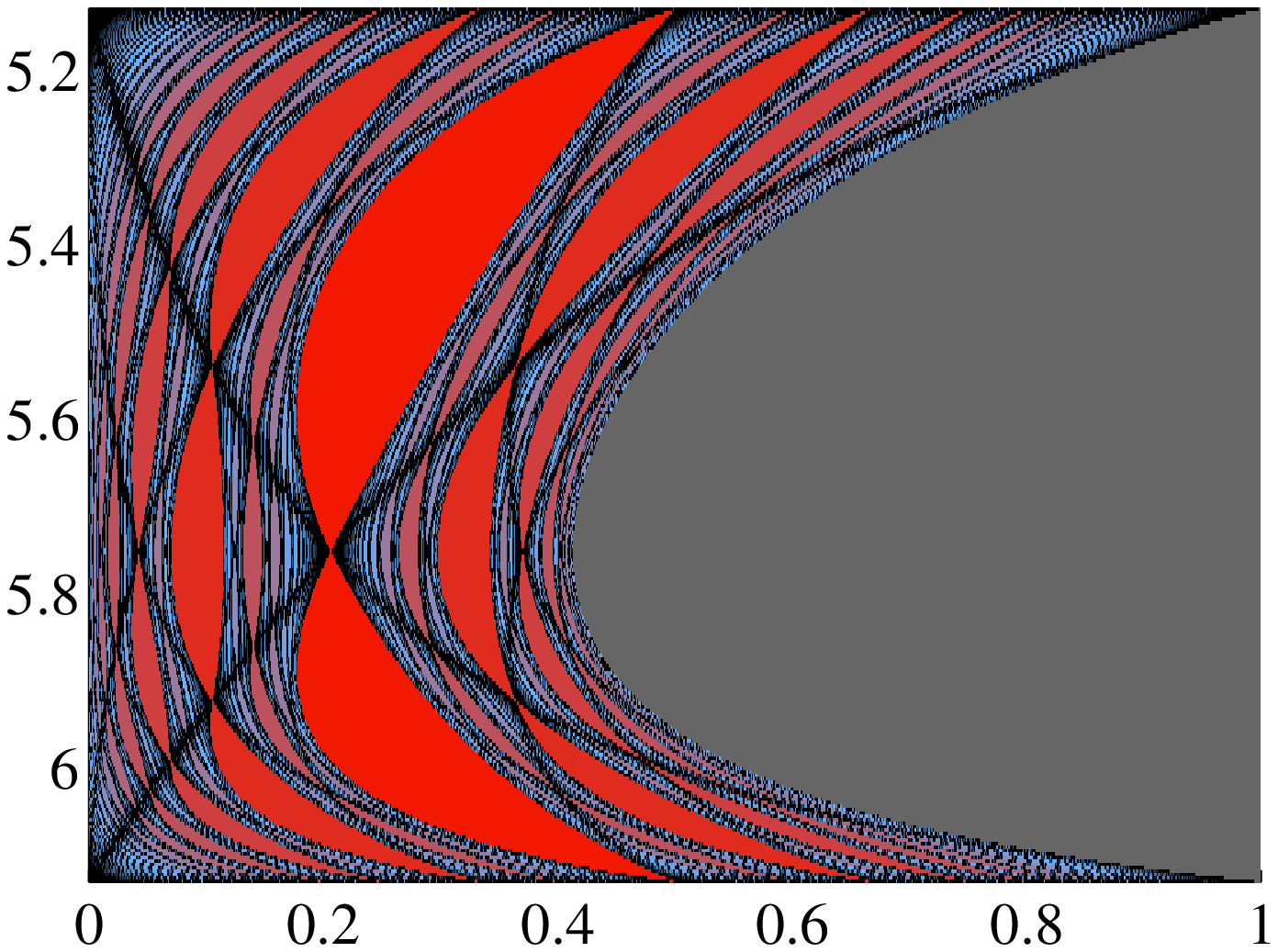}}
\put(7.7,0){\includegraphics[height=5.4cm]{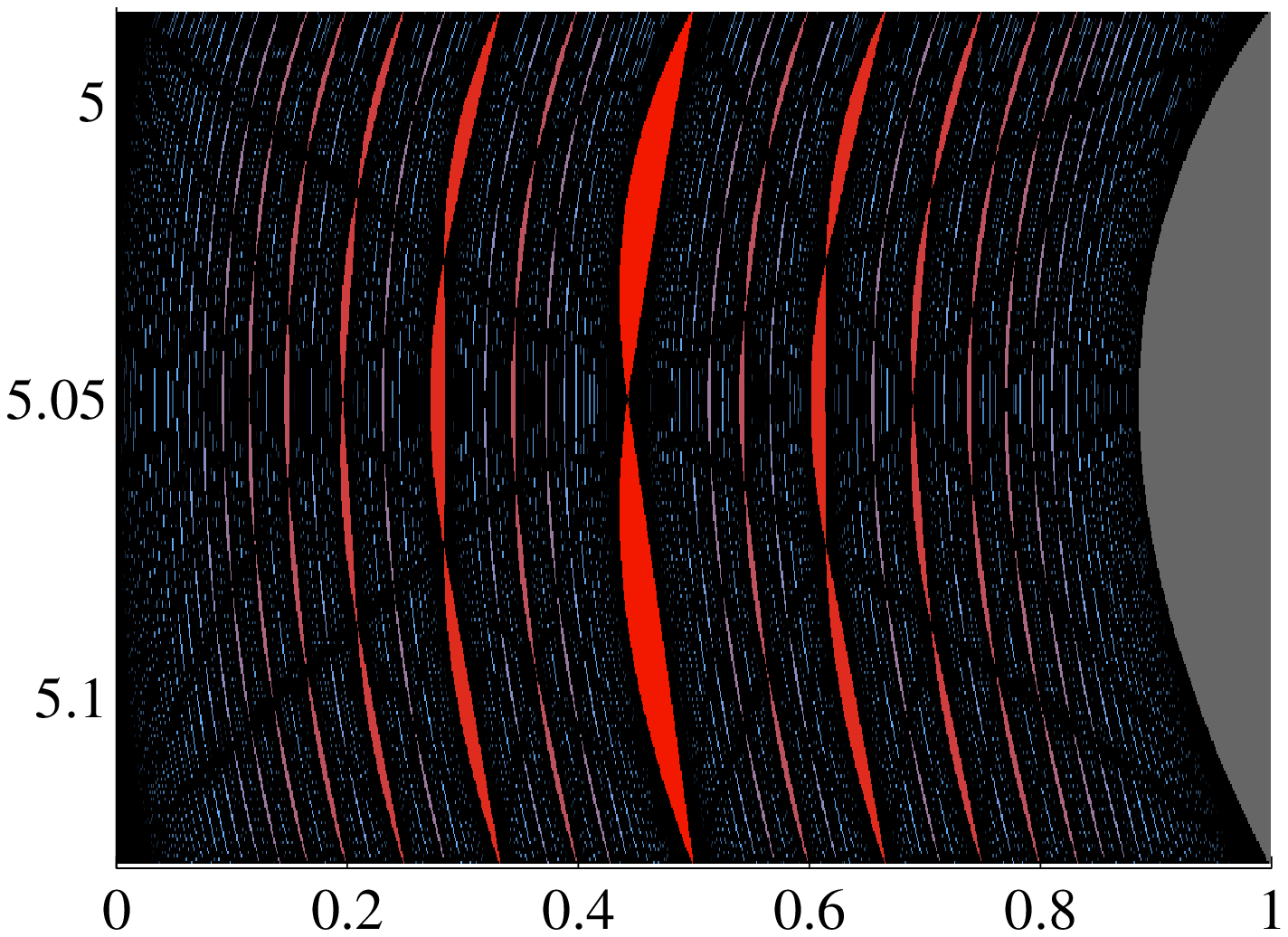}}
\put(3.6,0){$k^2 \delta$}
\put(0,3.7){$\theta$}
\put(11.3,0){$k^2 \delta$}
\put(7.7,3.7){$\theta$}
\put(3.5,5.58){$\Sigma^+_{k,0}$}
\put(11.2,5.58){$\Sigma^+_{k,1}$}
\end{picture}
\caption{
Mode-locking regions of (\ref{eq:g}) with (\ref{eq:aLaRw})
corresponding to $\Sigma^+_{k,0}$ and $\Sigma^+_{k,1}$ of Fig.~\ref{fig:modeLockApprox50}, where $k \in \mathbb{Z}^+$.
In both plots the rotation number (with respect to (\ref{eq:g})) associated with each mode-locking region
is equal to its value of $w = k^2 \delta$ at both the top of the figure (where $a_L = 1$)
and the bottom of the figure (where $a_R = 1$).
To four decimal places, $\frac{a_R}{a_L} = 14.7884$ for $\Sigma^+_{k,0}$,
and $\frac{a_R}{a_L} = 1.5856$ for $\Sigma^+_{k,1}$.
Note, the $\theta$-axis is reversed to match Fig.~\ref{fig:modeLockApprox50}.
\label{fig:modeLockSkewSaw50}
}
\end{center}
\end{figure}

Given $\ell,m,n \in \mathbb{Z}^+$, with $\ell < n$, $m < n$ and ${\rm gcd}(m,n) = 1$,
we define a symbol sequence $\cF[\ell,m,n] \in \{ L, R \}^{\mathbb{Z}}$ by
\begin{equation}
\cF[\ell,m,n]_i \defeq
\begin{cases}
L \;, & i m {\rm ~mod~} n < \ell \;, \\
R \;, & i m {\rm ~mod~} n \ge \ell \;.
\end{cases}
\label{eq:rss}
\end{equation}
Each $\cF[\ell,m,n]$ describes the action of stepping through $n$ points on a circle,
$\ell$ of which lie to the left of the switching manifold,
with rotation number $\frac{m}{n}$, \cite{SiMe09,Si10}.
Indeed, we call $\frac{m}{n}$ the {\em rotation number} of $\cF[\ell,m,n]$,
and refer to such symbol sequences and their shift permutations as {\em rotational}.
Rotational symbol sequences satisfy the important identity
\begin{equation}
\cF[\ell,m,n]^{\overline{0} \,\overline{\ell d}} = \cF[\ell,m,n]^{(-d)},
\label{eq:rssMainIdentity}
\end{equation}
where $d$ denotes the multiplicative inverse of $m$ modulo $n$,
and we use the notation $\cS^{\overline{i}}$ to
denote the symbol sequence that differs from $\cS$ in only the
indices $i + j n$, for all $j \in \mathbb{Z}$.

\subsection{Shrinking points}
\label{sub:shrPoints}

We use rotational symbol sequences to define shrinking points of (\ref{eq:f}).
Let $\cS = \cF[\ell,m,n]$ be a rotational symbol sequence with $2 \le \ell \le n-2$.
Suppose that at some point in parameter space $\xi$,
the following three genericity conditions are satisfied:
\begin{equation}
e_1^{\sf T} {\rm adj} \!\left( I - A_L \right) B \ne 0 \;, \qquad
\det \!\left( I - M_{\cS^{\overline{0}}} \right) \ne 0 \;, \qquad
\det \!\left( I - M_{\cS^{\overline{\ell d}}} \right) \ne 0 \;.
\label{eq:shrPointGeneric}
\end{equation}
In view of the second condition,
the $\cS^{\overline{0}}$-cycle is unique.
If the $\cS^{\overline{0}}$-cycle is also admissible,
with $s^{\cS^{\overline{0}}}_i = 0$ if and only if $i = 0$ or $i = \ell d {\rm \,mod\,} n$,
then we say that $\xi$ is an {\em $\cS$-shrinking point}.

As in \cite{Si17c}, at an $\cS$-shrinking point we let
\begin{equation}
a \defeq \det \!\left( I-M_{\cS^{\overline{0}}} \right), \qquad
b \defeq \det \!\left( I-M_{\cS^{\overline{\ell d}}} \right),
\label{eq:ab}
\end{equation}
and
\begin{equation}
y_i \defeq x^{\cS^{\overline{0}}}_i \;, \qquad
t_i \defeq s^{\cS^{\overline{0}}}_i \;,
\label{eq:yiti}
\end{equation}
for each $i$.
Many identities involving quantities associated with an $\cS$-shrinking point,
such as (\ref{eq:ab})--(\ref{eq:yiti}), are given in \cite{Si17c}.
Of these, perhaps the most fundamental is
\begin{equation}
\frac{a}{b} = -\frac{t_d t_{(\ell-1)d}}{t_{-d} t_{(\ell+1)d}} \;,
\label{eq:fourtIdentity}
\end{equation}
which was first given in \cite{SiMe10}.

By Lemma 5.8 of \cite{Si17c}, at an $\cS$-shrinking point $0$ is an eigenvalue of $M_{\cS}$ with multiplicity one.
We let
\begin{equation}
\rho_{\rm max} \defeq \max_{j=2,\ldots,N} |\rho_j|,
\label{eq:rhoMax}
\end{equation}
where $\rho_j$ are the eigenvalues of $M_{\cS}$, counting multiplicity, with $\rho_1 = 1$, and
\begin{equation}
c \defeq \prod_{j=2}^N (1-\rho_j).
\label{eq:c}
\end{equation}
If $\rho_{\rm max} < 1$ then $\cS$-cycles are
stable for some parameter values near the $\cS$-shrinking point
ensuring that the $\cS$-shrinking point is connected to part of a mode-locking region involving $\cS$-cycles
(instead of merely a periodicity region involving only unstable $\cS$-cycles).
Note that $\rho_{\rm max} < 1$ implies $c > 0$ (Lemma 7.4 of \cite{Si17c}).

\subsection{$\cG^\pm_k$-mode-locking regions}
\label{sub:nearby}

Let $\cF[\ell,m,n]$ be a rotational symbol sequence.
Let $\frac{m^-}{n^-}$ and $\frac{m^+}{n^+}$ denote the left and right Farey roots of $\frac{m}{n}$, and let
$\ell^- \defeq \left\lfloor \frac{\ell n^-}{n} \right\rfloor$ and
$\ell^+ \defeq \left\lceil \frac{\ell n^+}{n} \right\rceil$.
Let $k \in \mathbb{Z}^+$ and $\Delta \ell \in \mathbb{Z}$ with $| \Delta \ell | < k$.
We then define
\begin{equation}
\cG^\pm[k,\Delta \ell] \defeq \cF \!\left[ \ell_k^\pm + \Delta \ell, m_k^\pm, n_k^\pm \right],
\label{eq:Gplusminus}
\end{equation}
where
\begin{equation}
\ell_k^\pm \defeq k \ell + \ell^\pm \;, \qquad
m_k^\pm \defeq k m + m^\pm \;, \qquad
n_k^\pm \defeq k n + n^\pm \;.
\label{eq:lmnkpm}
\end{equation}
We also let $d_k^\pm$ denote the multiplicative inverse of $m_k^\pm$ modulo $n_k^\pm$.
The rotation number of $\cG^\pm[k,\Delta \ell]$ is $\frac{m_k^\pm}{n_k^\pm}$.
These rotation numbers represent the first level of complexity relative to $\frac{m}{n}$ \cite{ZhMo06b}.
As mentioned in \S\ref{sec:intro},
we refer to a mode-locking region with rotation number $\frac{m_k^\pm}{n_k^\pm}$
as a $\cG^\pm_k$-mode-locking region.

\begin{figure}[t!]
\begin{center}
\setlength{\unitlength}{1cm}
\begin{picture}(12,6)
\put(0,0){\includegraphics[height=6cm]{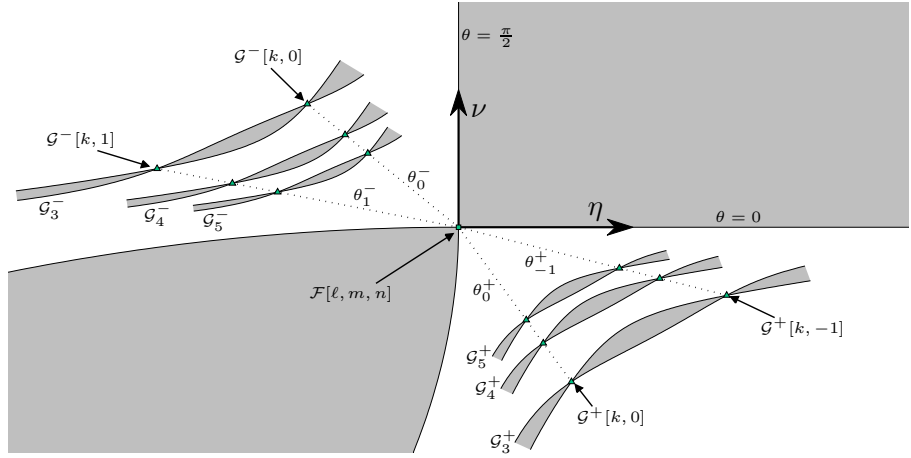}}
\put(7.7,3.17){\small $\eta$}
\put(6.15,4.47){\small $\nu$}
\put(9.4,3.08){\tiny $\theta = 0$}
\put(6.02,5.5){\tiny $\theta = \frac{\pi}{2}$}
\put(6.17,2.1){\tiny $\theta^+_0$}
\put(6.86,2.47){\tiny $\theta^+_{-1}$}
\put(5.3,3.6){\tiny $\theta^-_0$}
\put(4.58,3.37){\tiny $\theta^-_1$}
\put(7.56,.41){\tiny $\cG^+[k,0]$}
\put(10,1.6){\tiny $\cG^+[k,-1]$}
\put(3,5.22){\tiny $\cG^-[k,0]$}
\put(.5,4.11){\tiny $\cG^-[k,1]$}
\put(6.08,1.22){\tiny $\cG^+_5$}
\put(6.2,.8){\tiny $\cG^+_4$}
\put(6.4,.09){\tiny $\cG^+_3$}
\put(2.58,3.03){\tiny $\cG^-_5$}
\put(1.8,3.1){\tiny $\cG^-_4$}
\put(.4,3.2){\tiny $\cG^-_3$}
\put(4,2.06){\tiny $\cF[\ell,m,n]$}
\end{picture}
\caption{
A sketch of typical $\cG^\pm_k$-mode-locking regions
in $(\eta,\nu)$-coordinates in the case $a < 0$.
(Taken from \cite{Si17c}.)
\label{fig:shrPointSchem}
}
\end{center}
\end{figure}

For simplicity we assume $\xi \in \mathbb{R}^2$ and write $\xi = (\xi_1,\xi_2)$.
In a neighbourhood of an $\cS$-shrinking point, where $\cS = \cF[\ell,m,n]$, let
\begin{equation}
\eta \defeq s^{\cS^{\overline{0}}}_0(\xi_1,\xi_2), \qquad
\nu \defeq s^{\cS^{\overline{0}}}_{\ell d}(\xi_1,\xi_2).
\label{eq:etanu}
\end{equation}
Throughout this paper we assume that
$(\xi_1,\xi_2) \to (\eta,\nu)$ is a locally invertible coordinate change.
That is, $\det(J) \ne 0$,
where $J \defeq \begin{bmatrix}
\frac{\partial \eta}{\partial \xi_1} &
\frac{\partial \eta}{\partial \xi_2} \\
\frac{\partial \nu}{\partial \xi_1} &
\frac{\partial \nu}{\partial \xi_2}
\end{bmatrix}$ evaluated at the shrinking point.
We have $s^{\cS^{\overline{0}}}_0 = s^{\cS^{\overline{0}}}_{\ell d} = 0$ at the $\cS$-shrinking point.
Thus in $(\eta,\nu)$-coordinates the $\cS$-shrinking point is located at the origin.

As in \cite{Si17c}, we define polar coordinates $(r,\theta)$ by
\begin{equation}
\eta = \left| \frac{c t_d}{a} \right| r \cos(\theta), \qquad
\nu = \left| \frac{c t_{(\ell-1)d}}{a} \right| r \sin(\theta),
\label{eq:polarCoords}
\end{equation}
where $a$, $c$, $t_d$ and $t_{(\ell-1)d}$ are defined by (\ref{eq:ab})--(\ref{eq:yiti}).
If $a < 0$, then $\cG^+_k$-mode-locking regions exist for $\theta \in \left( \frac{3 \pi}{2}, 2 \pi \right)$
and $\cG^-_k$-mode-locking regions exist for $\theta \in \left( \frac{\pi}{2}, \pi \right)$,
see Fig.~\ref{fig:shrPointSchem}.
If $a > 0$, then the opposite is true.

For large values of $k$,
the $\cG^\pm_k$-mode-locking regions are narrow regions
that lie within $\cO \!\left( \frac{1}{k^2} \right)$ of the curve
\begin{equation}
r = \frac{1}{k} \Gamma(\theta),
\label{eq:nearbyCurve}
\end{equation}
where $\Gamma$ is defined by
\begin{equation}
\Gamma(\theta) \defeq \begin{cases}
\frac{\ln \left( \cos(\theta) \right) - \ln \left( \sin(\theta) \right)}
{\cos(\theta) - \sin(\theta)} \;, & \theta \in \left( 0, \frac{\pi}{2} \right)
\setminus \left\{ \frac{\pi}{4} \right\},\\
\sqrt{2} \;, & \theta = \frac{\pi}{4} \;,
\end{cases}
\label{eq:Gamma}
\end{equation}
for $\theta \in \left( 0, \frac{\pi}{2} \right)$, and defined by
$\Gamma(\theta) = \Gamma \!\left( \theta {\rm ~mod~} \frac{\pi}{2} \right)$
for all other non-integer multiples of $\frac{\pi}{2}$.

The existence of shrinking points on the $\cG^\pm_k$-mode-locking regions for large values of $k$
is determined by scalar quantities $\kappa^\pm_{\Delta \ell}$
that are associated with the $\cS$-shrinking point.
Formulas for these are given in Appendix \ref{app:formulas}.
If $\rho_{\rm max} < 1$ and certain admissibility conditions are satisfied,
then given $\Delta \ell \in \mathbb{Z}$ and a choice of $+$ or $-$,
$\cG^\pm_k$-mode-locking regions have $\cG^\pm[k,\Delta \ell]$-shrinking points
for all sufficiently large values of $k$ if $\kappa^\pm_{\Delta \ell} > 0$,
see Theorem 2.2 of \cite{Si17c}.
If instead $\kappa^\pm_{\Delta \ell} < 0$, then the mode-locking regions
have short stability-loss boundaries where the $\cG^\pm[k,\Delta \ell]$-cycle has a stability multiplier of $-1$.

To leading order, the $\theta$-values of the $\cG^\pm[k,\Delta \ell]$-shrinking points
and stability loss boundaries are independent of $k$.
We denote the leading order components by $\theta^\pm_{\Delta \ell}$ and provide formulas for them in Appendix \ref{app:formulas}.

\subsection{Nearly-hyperbolic annular sectors}
\label{sub:sectors}

Let $\Delta \ell \in \mathbb{Z}$ and a choice of $+$ or $-$ be given.
Suppose $\kappa^\pm_{\Delta \ell}$ and $\kappa^\pm_{\Delta \ell - 1}$ are non-zero (as is generically the case)
with which $\theta^\pm_{\Delta \ell}$ and $\theta^\pm_{\Delta \ell - 1}$ are well-defined.
Assuming admissibility, for any sufficiently large value of $k \in \mathbb{Z}$
the inner boundary of the $\cG^\pm_{k}$-mode-locking region, call it $\gamma_k$,
is $C^K$ between endpoints, call them $p_k$ and $q_k$,
at $\theta = \theta^\pm_{\Delta \ell} + \cO \!\left( \frac{1}{k} \right)$ and
$\theta = \theta^\pm_{\Delta \ell - 1} + \cO \!\left( \frac{1}{k} \right)$.
If $\kappa^\pm_{\Delta \ell} > 0$, then $p_k$ is a $\cG^\pm[k,\Delta \ell]$-shrinking point,
while if $\kappa^\pm_{\Delta \ell} < 0$, then at $p_k$ there is
an $\cG^\pm[k,\Delta \ell]$-cycle with a stability multiplier of $-1$.
Similarly if $\kappa^\pm_{\Delta \ell - 1} > 0$, then $q_k$ is a $\cG^\pm[k,\Delta \ell - 1]$-shrinking point,
while if $\kappa^\pm_{\Delta \ell - 1} < 0$, then at $q_k$ there is
an $\cG^\pm[k,\Delta \ell - 1]$-cycle with a stability multiplier of $-1$.

We define $\Sigma^\pm_{k,\Delta \ell}$ as the region bounded by the curves
$\gamma_k$ and $\gamma_{k+1}$, the line segment connecting $p_k$ and $p_{k+1}$,
and the line segment connecting $q_k$ and $q_{k+1}$.
The boundaries $\gamma_k$ and $\gamma_{k+1}$ are given by (\ref{eq:nearbyCurve}), which resembles a hyperbola,
while $\theta$ is approximately constant along the two line segments.
In this sense $\Sigma^\pm_{k,\Delta \ell}$ is a ``nearly-hyperbolic annular sector''\removableFootnote{
We define $\Sigma^\pm_{k,\Delta \ell}$ using the shrinking points with $\Delta \ell$ and $\Delta \ell-1$
not using the shrinking points with $\Delta \ell$ and $\Delta \ell+1$ for the following reason.

Below we focus on $\cG^+[k,\Delta \ell]$-cycles with $\Delta \ell \ge 0$.
In order to use a centre manifold to study $\cG^+[k,\Delta \ell]$-cycles with $\Delta \ell \ge 0$,
it is desirable to study $\cG^+[k,\Delta \ell]$ and $\cG^+[k,\Delta \ell]^{\overline{0}}$-cycles.
Since $\cG^+[k,\Delta \ell]^{\overline{0}}$ is a permutation of $\cG^+[k,\Delta \ell-1]$,
it is the shrinking points with $\Delta \ell$ and $\Delta \ell-1$ that are relevant.

In order to study other cases in the same fashion it is necessary to look at permutations
of $\cG^\pm[k,\Delta \ell]$, for which when we flip the first symbol
the resulting symbol sequence is instead a permutation of $\cG^\pm[k,\Delta \ell+1]$ in some cases.
Nevertheless, as done below, we can state the results in all cases
using the shrinking points with $\Delta \ell$ and $\Delta \ell-1$.
}\removableFootnote{
To deal with other cases we have to work with certain permutations of the $\cG^{\pm}[k,\Delta \ell]$
as specified in the following definition.

\begin{definition}
For all $k \in \mathbb{Z}^+$ and $|\Delta \ell| < k$, define
\begin{align}
\cH^+[k,\Delta \ell] = \begin{cases}
\cG^+[k,\Delta \ell]^{\left( \tilde{\ell} d_k^+ \right)} \;, & \Delta \ell = -k+1,\ldots,-1 \\
\cG^+[k,\Delta \ell] \;, & \Delta \ell = 0,\ldots,k-1
\end{cases} \;, \\
\cH^-[k,\Delta \ell] = \begin{cases}
\cG^-[k,\Delta \ell]^{\left( -d_k^- \right)} \;, & \Delta \ell = -k+1,\ldots,0 \\
\cG^-[k,\Delta \ell]^{\left( \left( \tilde{\ell}-1 \right) d_k^- \right)} \;, & \Delta \ell = 1,\ldots,k-1
\end{cases} \;.
\end{align}
\end{definition}

As indicated by the following two lemmas,
the above formulation is useful because each $\cH^{\pm}[k,\Delta \ell]$
can be written in terms of $\cS$ and its partitions with an expression that
{\em ends} in a power involving $k$,
and each $\cH^{\pm}[k,\Delta \ell]^{\overline{0}}$ is a permutation of either
$\cG^{\pm}[k,\Delta \ell-1]$ or $\cG^{\pm}[k,\Delta \ell+1]$.

\begin{lemma}
For all $k \in \mathbb{Z}^+$ and $|\Delta \ell| < k$,
\begin{align}
\cH^+[k,\Delta \ell] = \begin{cases}
\left( \cY \cX^{\overline{0}} \right)^{-\Delta \ell-1} \check{\cX}
\left( \cS^{((\ell-1)d)} \right)^{k+\Delta \ell+1} \;, & \Delta \ell = -k+1,\ldots,-1 \\
\left( \cX \cY^{\overline{0}} \right)^{\Delta \ell} \hat{\cX}
\left( \cS^{(-d)} \right)^{k-\Delta \ell} \;, & \Delta \ell = 0,\ldots,k-1
\end{cases} \;, \\
\cH^-[k,\Delta \ell] = \begin{cases}
\left( \cX^{\overline{0}} \cY \right)^{-\Delta \ell} \hat{\cY}
\cS^{k+\Delta \ell} \;, & \Delta \ell = -k+1,\ldots,0 \\
\left( \cY^{\overline{0}} \cX \right)^{\Delta \ell-1} \check{\cY}
\left( \cS^{(\ell d)} \right)^{k-\Delta \ell+1} \;, & \Delta \ell = 1,\ldots,k-1
\end{cases} \;.
\end{align}
\end{lemma}

\begin{lemma}
For all $k \in \mathbb{Z}^+$,
\begin{enumerate}
\setlength{\itemsep}{0pt}
\item
$\cH^+[k,\Delta \ell]^{\overline{0}} = \cG^+[k,\Delta \ell+1]^{\left( \tilde{\ell} d_k^+ \right)}$,
for all $\Delta \ell = -k+1,\ldots,-1$,
\item
$\cH^+[k,\Delta \ell]^{\overline{0}} = \cG^+[k,\Delta \ell-1]^{\left( -d_k^+ \right)}$,
for all $\Delta \ell = 0,\ldots,k-1$,
\item
$\cH^-[k,\Delta \ell]^{\overline{0}} = \cG^-[k,\Delta \ell+1]$,
for all $\Delta \ell = -k+1,\ldots,0$,
\item
$\cH^-[k,\Delta \ell]^{\overline{0}} = \cG^-[k,\Delta \ell-1]^{\left( \left( \tilde{\ell}-1 \right) d_k^- \right)}$,
for all $\Delta \ell = 1,\ldots,k-1$.
\end{enumerate}
\label{le:cHall0bar}
\end{lemma}
}.

Equations (\ref{eq:nearbyCurve}) and (\ref{eq:thetaPlus})--(\ref{eq:thetaMinus}) provide
expressions for the boundaries of $\Sigma^\pm_{k,\Delta \ell}$ 
with an $\cO \!\left( \frac{1}{k^2} \right)$ error.
Fig.~\ref{fig:sectorsApprox50} illustrates the leading order
approximation for the example of Fig.~\ref{fig:modeLockApprox50}.


For a given value of $\Delta \ell$ and a choice of $+$ or $-$,
let $r_k(\theta)$ denote the radial coordinate of $\gamma_k$
(i.e.~the inner boundary of the $\cG^\pm_k$-mode-locking region) for each $k$.
Extending (\ref{eq:nearbyCurve}),
we have $r_k(\theta) = \frac{1}{k} \Gamma(\theta) + \frac{1}{k^2} \Gamma_2(\theta) +
\cO \!\left( \frac{1}{k^3} \right)$, for a $C^K$ function $\Gamma_2$.
The difference in $r$-values between the inner and outer boundaries of $\Sigma^\pm_{k,\Delta \ell}$
is then\removableFootnote{
In an earlier write-up my proof of this result required about two pages!
}
\begin{equation}
r_k(\theta) - r_{k+1}(\theta) = \frac{1}{k^2} \Gamma(\theta) + \cO \!\left( \frac{1}{k^3} \right),
\label{eq:outerInnerDifference}
\end{equation}
because $\cO \!\left( \frac{1}{k^2} \right)$-terms involving $\Gamma_2$ vanish.

We then define
\begin{equation}
\delta \defeq \frac{1}{k} \left( 1 - \frac{r}{r_k(\theta)} \right),
\label{eq:delta}
\end{equation}
with which each point in $\Sigma^\pm_{k,\Delta \ell}$ is uniquely represented by the pair $(\delta,\theta)$.
We have $\delta = 0$ at any point on the outer boundary of $\Sigma^\pm_{k,\Delta \ell}$ and
by (\ref{eq:outerInnerDifference}) we have
$\delta = \frac{1}{k^2} + \cO \!\left( \frac{1}{k^3} \right)$
at any point on the inner boundary of $\Sigma^\pm_{k,\Delta \ell}$.
Let
\begin{equation}
\theta_{\rm min} \defeq \min \!\left( \theta^{\pm}_{\Delta \ell} ,\, \theta^{\pm}_{\Delta \ell-1} \right), \qquad
\theta_{\rm max} \defeq \max \!\left( \theta^{\pm}_{\Delta \ell} ,\, \theta^{\pm}_{\Delta \ell-1} \right).
\label{eq:thetaMinMax}
\end{equation}
On the two linear boundaries of $\Sigma^\pm_{k,\Delta \ell}$, we have
$\theta = \theta_{\rm min} + \cO \!\left( \frac{1}{k} \right)$ and
$\theta = \theta_{\rm max} + \cO \!\left( \frac{1}{k} \right)$.

The particular definition (\ref{eq:delta}) is useful because,
to leading order, $\Sigma^\pm_{k,\Delta \ell}$ is a rectangle in
$(\delta,\theta)$-coordinates as given by (\ref{eq:SigmaApprox}).
We include the factor $\frac{1}{k}$ in (\ref{eq:delta}) so that
an $\cO(1)$ change in the value of $\delta$ corresponds to an $\cO(1)$ change in the value of $r$.

\subsection{The approximation by skew sawtooth maps}
\label{sub:skewSawtoothApprox}

Throughout each sector $\Sigma^\pm_{k,\Delta \ell}$ we can approximate the dynamics of (\ref{eq:f})
by using the skew sawtooth map (\ref{eq:g}) with appropriate formulas for 
$a_L$, $a_R$ and $w$ in terms of $\delta$ and $\theta$.
The value of $w$ is independent of $\theta$ and given by
\begin{equation}
w = k^2 \delta \;.
\end{equation}
The values of $a_L$ and $a_R$ are independent of $\delta$ and given in Table \ref{tb:aLaR}.
Note that we do not consider sectors with $\kappa^\pm_{\Delta \ell} < 0$ and $\kappa^\pm_{\Delta \ell - 1} < 0$
as such sectors do not seem to involve admissible $\cG^\pm[k,\Delta \ell]$-cycles\removableFootnote{
I don't think I need to mention the following.
Suppose $\kappa^\pm_{\Delta \ell} \kappa^\pm_{\Delta \ell - 1} < 0$.
By Theorem 2.3 of \cite{Si17c}, if $a < 0$ then $\kappa^\pm_{\Delta \ell-1} > 0$,
while if $a > 0$ then $\kappa^\pm_{\Delta \ell} > 0$.
}.

\begin{table}[b!]
\renewcommand{\arraystretch}{1.5}
\begin{center}
\begin{tabular}{c|c|c|}
& $a_L$ & $a_R$ \\
\hline
for $\Sigma^+_{k,\Delta \ell}$ with $\kappa^+_{\Delta \ell} > 0$, $\kappa^+_{\Delta \ell - 1} > 0$ &
$\frac{\tan(\theta)}{\tan(\theta_{\rm min})}$ &
$\frac{\tan(\theta)}{\tan(\theta_{\rm max})}$ \\[1mm]
\hline
for $\Sigma^+_{k,\Delta \ell}$ with $\kappa^+_{\Delta \ell} \kappa^+_{\Delta \ell - 1} < 0$ &
$-\frac{\tan(\theta)}{\tan(\theta_{\rm min})}$ &
$\frac{\tan(\theta)}{\tan(\theta_{\rm max})}$ \\[1mm]
\hline
for $\Sigma^-_{k,\Delta \ell}$ with $\kappa^-_{\Delta \ell} > 0$, $\kappa^-_{\Delta \ell - 1} > 0$ &
$\frac{\tan(\theta_{\rm max})}{\tan(\theta)}$ &
$\frac{\tan(\theta_{\rm min})}{\tan(\theta)}$ \\[1mm]
\hline
for $\Sigma^-_{k,\Delta \ell}$ with $\kappa^-_{\Delta \ell} \kappa^-_{\Delta \ell - 1} < 0$ &
$-\frac{\tan(\theta_{\rm max})}{\tan(\theta)}$ &
$\frac{\tan(\theta_{\rm min})}{\tan(\theta)}$ \\[1mm]
\hline
\end{tabular}
\caption{
Formulas for $a_L$ and $a_R$ inside a sector $\Sigma^\pm_{k,\Delta \ell}$.
\label{tb:aLaR}
}
\end{center}
\end{table}

As indicated in Table \ref{tb:aLaR}, the formulas for $a_L$ and $a_R$ take slightly different forms in different cases,
but in each case $\frac{a_R}{a_L}$ is constant throughout $\Sigma^\pm_{k,\Delta \ell}$.
Therefore the value of $\frac{a_R}{a_L}$ characterises the two-dimensional slice
of parameter space of (\ref{eq:g}) to which $\Sigma^\pm_{k,\Delta \ell}$ corresponds.
In terms of $a_L$, $a_R$, and $w$,
in the approximation of (\ref{eq:SigmaApprox}) the four parts of the boundary of $\Sigma^\pm_{k,\Delta \ell}$ are
given by $w=0$, $w=1$, $a_R=1$, and either $a_L=1$ or $a_L=-1$.

One iteration of the skew sawtooth map (\ref{eq:g}) corresponds to many iterations of (\ref{eq:f}).
To be more specific, let
\begin{equation}
g_{\rm lift}(z) \defeq \begin{cases}
w + a_L \left( z_i - z_{\rm sw} \right) + z_{\rm sw} \;,
& 0 \le z_i \le z_{\rm sw} \;, \\
w + a_R \left( z_i - z_{\rm sw} \right) + z_{\rm sw} \;,
& z_{\rm sw} \le z_i < 1 \;,
\end{cases}
\label{eq:glift}
\end{equation}
be a lift of $g$.
At any point in $\Sigma^\pm_{k,\Delta \ell}$,
for all but an $\cO \!\left( \frac{1}{k} \right)$ set of $z$-values
for which (\ref{eq:f}) and (\ref{eq:g}) are misaligned,
if $0 \le z \le z_{\rm sw}$ then $g(z)$ corresponds to $f^{\cG^\pm[k+\Delta k,\Delta \ell]}$,
while if $z_{\rm sw} \le z < 1$ then $g(z)$ corresponds to $f^{\cG^\pm[k+\Delta k,\Delta \ell - 1]}$, where
\begin{equation}
\Delta k = g_{\rm lift}(z) - g(z).
\label{eq:Deltak}
\end{equation}
A formal statement of this approximation is given below by Theorem \ref{th:main}.

\subsection{Dynamics of skew sawtooth maps}
\label{sub:skewSawtoothDyns}

If $\frac{a_R}{a_L} > 0$, as in both plots shown in Fig.~\ref{fig:modeLockSkewSaw50},
then (\ref{eq:g}) is a homeomorphism and has a unique rotation number.
The dynamics of (\ref{eq:g}) approximates that of (\ref{eq:f}),
not only in $\Sigma^+_{2,0}$ and $\Sigma^+_{3,1}$,
but also in $\Sigma^+_{k,0}$ and $\Sigma^+_{k,1}$ for all $k \ge 1$, see Fig.~\ref{fig:modeLockApprox50}.

For instance in the large grey region of the left plot of Fig.~\ref{fig:modeLockSkewSaw50},
the map (\ref{eq:g}) has a stable fixed point with $0 \le z \le z_{\rm sw}$ and $g_{\rm lift}(z) - g(z) = 1$.
Therefore, in the corresponding region of each $\Sigma^+_{k,0}$,
the map (\ref{eq:f}) has a stable $\cG^+[k+1,0]$-cycle
(this belongs to the $\cG^+_{k+1}$-mode-locking-region).

As a further illustration, consider the mode-locking region of the left plot of
Fig.~\ref{fig:modeLockSkewSaw50} with rotation number $\frac{1}{2}$.
This region has two components connected by a shrinking point.
The map (\ref{eq:g}) has a stable period-two solution $\{ z_0, z_1 \}$
with $g_{\rm lift}(z_0) - g(z_0) = 0$, $g_{\rm lift}(z_1) - g(z_1) = 1$, $0 \le z_0 \le z_{\rm sw}$, and
$0 \le z_1 \le z_{\rm sw}$ in the upper component, and a stable period-two solution $\{ z_0, z_1 \}$
with the same properties except $z_{\rm sw} \le z_1 < 1$ in the lower component.
Therefore, in the upper [resp.~lower] component of each $\Sigma^+_{k,0}$,
the map (\ref{eq:f}) has a stable $\cG^+[k,0] \cG^+[k+1,0]$-cycle [resp.~$\cG^+[k,0] \cG^+[k+1,-1]$-cycle].
These belong to a mode-locking region with rotation number $\frac{m^+_k + m^+_{k+1}}{n^+_k + n^+_{k+1}}$.
In this way (\ref{eq:g}) can be used to describe all mode-locking regions of (\ref{eq:f}) near a shrinking point.

\begin{figure}[t!]
\begin{center}
\setlength{\unitlength}{1cm}
\begin{picture}(14.1,9.5)
\put(0,.3){\includegraphics[height=9cm]{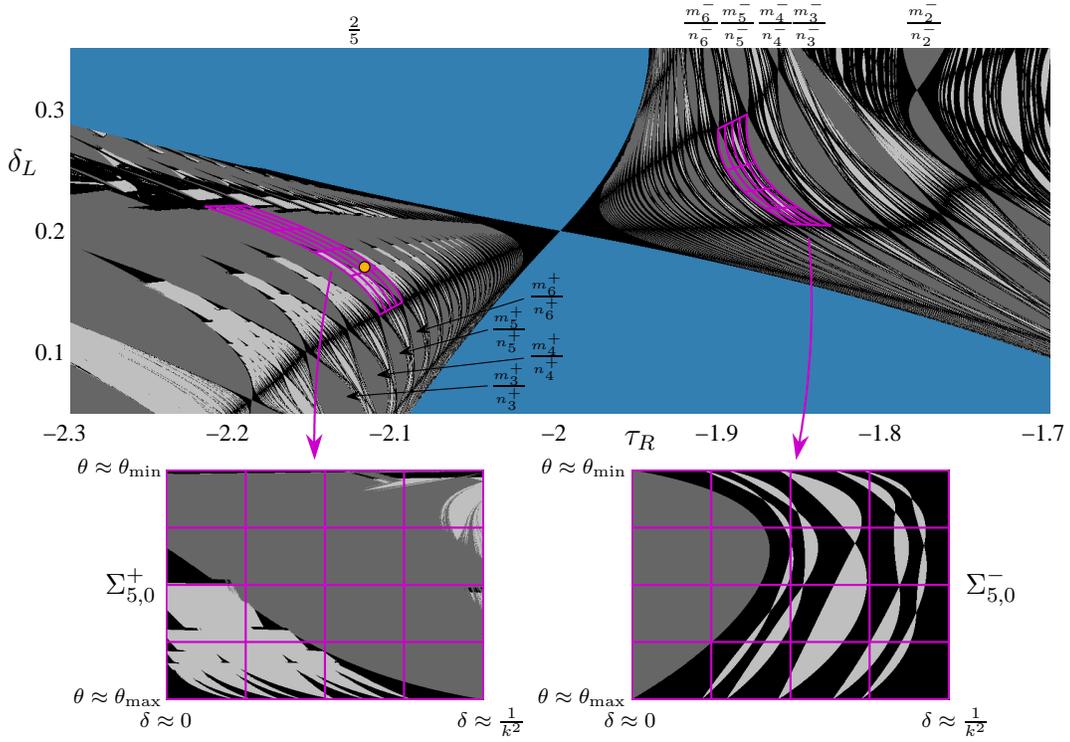}}
\put(8.2,3.76){$\tau_R$}
\put(0,7.4){$\delta_L$}
\put(4.5,9.2){\scriptsize $\frac{2}{5}$}
\put(11.93,9.32){\tiny $\frac{m_2^-}{n_2^-}$}
\put(10.4,9.32){\tiny $\frac{m_3^-}{n_3^-}$}
\put(9.95,9.32){\tiny $\frac{m_4^-}{n_4^-}$}
\put(9.45,9.32){\tiny $\frac{m_5^-}{n_5^-}$}
\put(8.97,9.32){\tiny $\frac{m_6^-}{n_6^-}$}
\put(6.42,4.48){\tiny $\frac{m_3^+}{n_3^+}$}
\put(6.92,4.88){\tiny $\frac{m_4^+}{n_4^+}$}
\put(6.42,5.24){\tiny $\frac{m_5^+}{n_5^+}$}
\put(6.92,5.68){\tiny $\frac{m_6^+}{n_6^+}$}
\put(1.76,0){\scriptsize $\delta \approx 0$}
\put(5.98,0){\scriptsize $\delta \approx \frac{1}{k^2}$}
\put(.88,.32){\scriptsize $\theta \approx \theta_{\rm max}$}
\put(.93,3.38){\scriptsize $\theta \approx \theta_{\rm min}$}
\put(1.3,1.78){\small $\Sigma^+_{5,0}$}
\put(12.74,1.78){\small $\Sigma^-_{5,0}$}
\put(7.93,0){\scriptsize $\delta \approx 0$}
\put(12.15,0){\scriptsize $\delta \approx \frac{1}{k^2}$}
\put(7.05,.32){\scriptsize $\theta \approx \theta_{\rm max}$}
\put(7.1,3.38){\scriptsize $\theta \approx \theta_{\rm min}$}
\end{picture}
\caption{
A magnification of Fig.~\ref{fig:modeLockEx20}
centred about the $\cS$-shrinking point with $\cS = \cF[2,2,5] = LRRLR$
computed in the same way as Fig.~\ref{fig:modeLockApprox50}.
Here $\frac{m}{n} = \frac{2}{5}$ and the Farey roots are
$\frac{m^-}{n^-} = \frac{1}{3}$ and $\frac{m^+}{n^+} = \frac{1}{2}$.
For $\Sigma^+_{5,0}$, $\theta_{\rm min} = \theta^+_0 \approx 4.9786$
and $\theta_{\rm max} = \theta^+_{-1} \approx 6.0014$.
For $\Sigma^-_{5,0}$, $\theta_{\rm min} = \theta^-_0 \approx 2.2254$
and $\theta_{\rm max} = \theta^-_1 \approx 3.0811$.
The dot inside $\Sigma^+_{5,0}$ indicates the parameter values of Fig.~\ref{fig:iteratePhi}-B.
\label{fig:modeLockApprox20}
}
\end{center}
\end{figure}

Fig.~\ref{fig:modeLockApprox20} shows a second example
using the $\cS$-shrinking point of Fig.~\ref{fig:modeLockEx20} with $\cS = LRRLR$.
The sectors $\Sigma^+_{5,0}$ and $\Sigma^-_{5,0}$ are highlighted
and Fig.~\ref{fig:modeLockSkewSaw20} shows the mode-locking regions of (\ref{eq:g}) corresponding to these sectors.

As expected, the dynamics of (\ref{eq:g}) shown in Fig.~\ref{fig:modeLockSkewSaw20}
matches well to the dynamics of (\ref{eq:f}) in $\Sigma^+_{5,0}$ and $\Sigma^-_{5,0}$
shown in Fig.~\ref{fig:modeLockApprox20}.
For each $\Sigma^-_{k,0}$, we have $\frac{a_R}{a_L} = 21 \frac{1}{2}$\removableFootnote{
For the $\cF[2,2,5]$-shrinking point,
$a = -\frac{41}{5}$,
$b = \frac{123}{125}$, and, as given by equation (3.2) of \cite{Si17c},
$\kappa^+_{-1} = \frac{38}{55}$,
$\kappa^+_0 = -\frac{5}{11}$,
$\kappa^-_0 = \frac{43}{55}$, and
$\kappa^-_1 = \frac{10}{33}$.
By Table \ref{tb:aLaR}, the formulas given in Appendix \ref{app:formulas}, and (\ref{eq:fourtIdentity}), for $\Sigma^+_{5,0}$,
\begin{equation}
\frac{a_R}{a_L} = -\frac{\tan \!\left( \theta^+_0 \right)}{\tan \!\left( \theta^+_{-1} \right)}
= -\frac{a \kappa^+_{-1}}{b \kappa^+_0}
= -\frac{38}{3}
= -12 {\textstyle \frac{2}{3}} \;,
\nonumber
\end{equation}
and for $\Sigma^-_{5,0}$,
\begin{equation}
\frac{a_R}{a_L} = -\frac{\tan \!\left( \theta^-_0 \right)}{\tan \!\left( \theta^-_1 \right)}
= -\frac{a \kappa^-_0}{b \kappa^-_1}
= \frac{43}{2}
= 21 {\textstyle \frac{1}{2}} \;.
\nonumber
\end{equation}
},
and so the right plot of Fig.~\ref{fig:modeLockSkewSaw20} is similar to
the left plot of Fig.~\ref{fig:modeLockSkewSaw50}
(for which $\frac{a_R}{a_L} \approx 14.7884$).
For each $\Sigma^+_{k,0}$, we have $\frac{a_R}{a_L} = -12 \frac{2}{3}$.
Here $\frac{a_R}{a_L} < 0$ because $\kappa^+_0 < 0$
(thus $\cG^+[k,0]$-shrinking points do not exist)
and $\kappa^+_{-1} > 0$, see Table \ref{tb:aLaR}.
With $\frac{a_R}{a_L} < 0$, the map (\ref{eq:g}) is not invertible
and exhibits a fundamentally different bifurcation structure.
There are no shrinking points.
Mode-locking regions emanate from $\theta = \theta_{\rm max}$ and terminate at critical values of $\theta$ where
the corresponding periodic solution attains a stability multiplier of $-1$.
Beyond these critical values the dynamics of (\ref{eq:g}) can be chaotic
and multiple attractors exist for some parameter values \cite{CaGa96}.

\begin{figure}[t!]
\begin{center}
\setlength{\unitlength}{1cm}
\begin{picture}(14.9,5.7)
\put(0,0){\includegraphics[height=5.4cm]{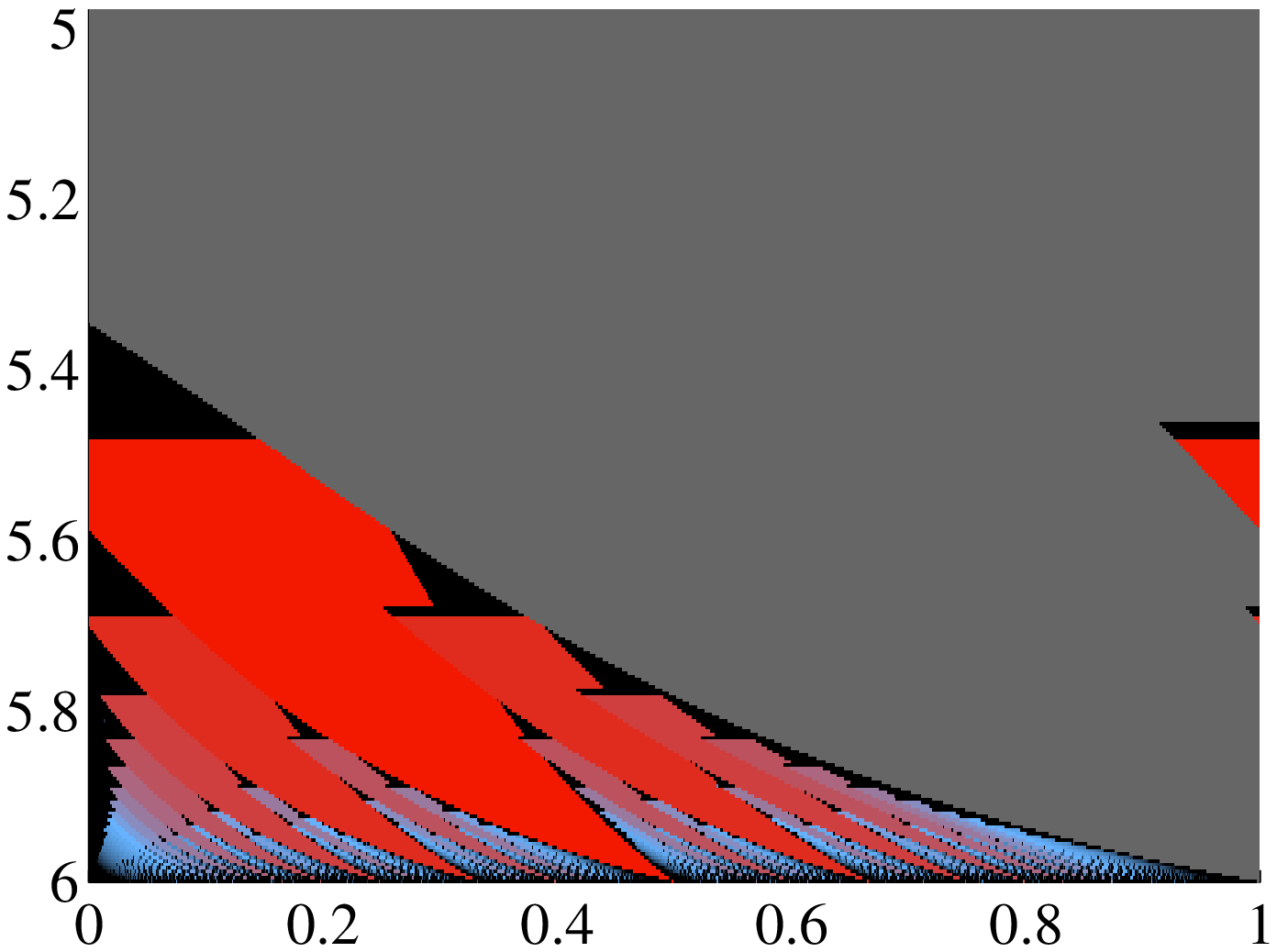}}
\put(7.7,0){\includegraphics[height=5.4cm]{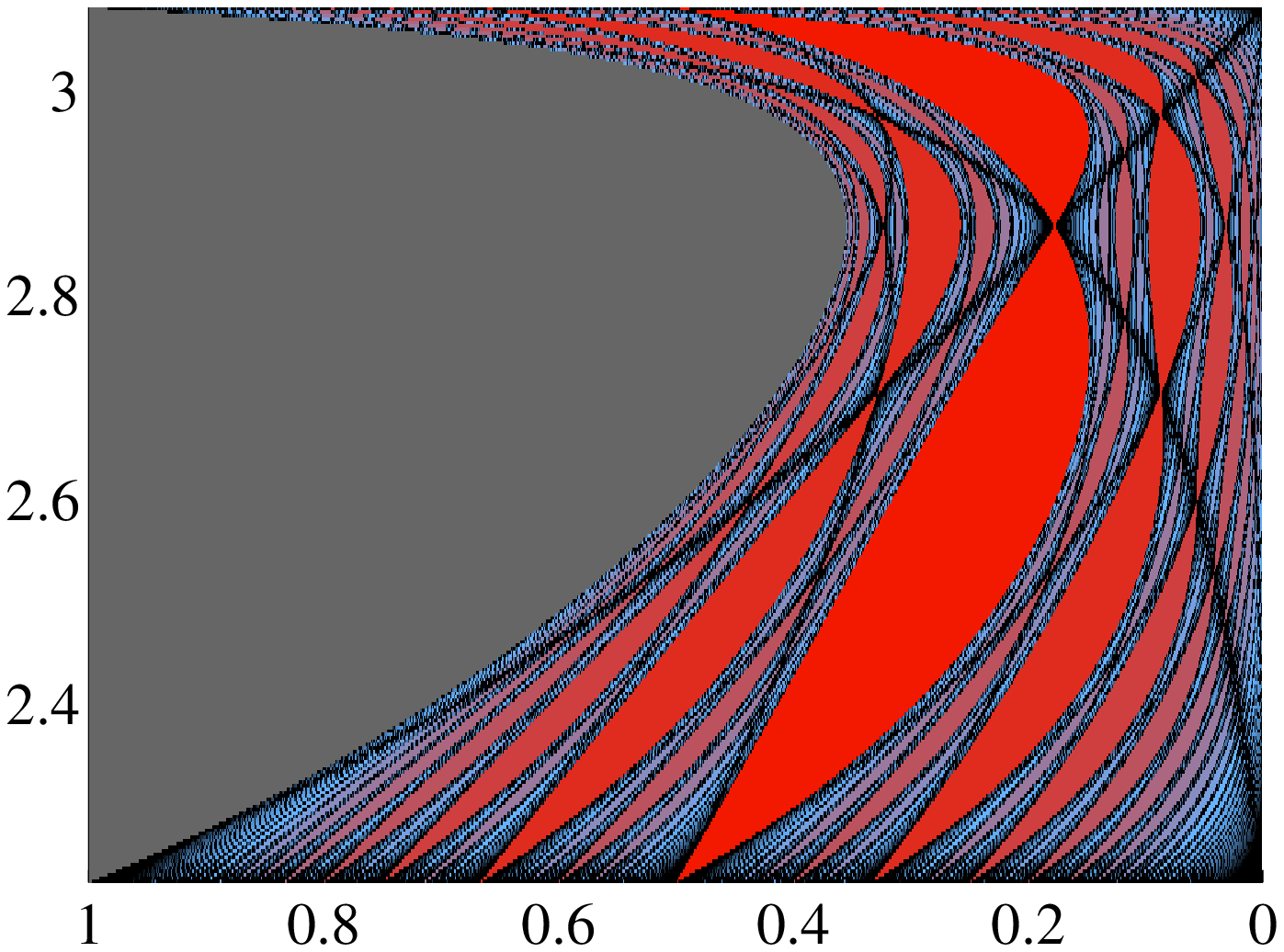}}
\put(3.6,0){$k^2 \delta$}
\put(0,3.7){$\theta$}
\put(11.3,0){$k^2 \delta$}
\put(7.7,3.7){$\theta$}
\put(3.5,5.58){$\Sigma^+_{k,0}$}
\put(11.2,5.58){$\Sigma^-_{k,0}$}
\end{picture}
\caption{
Mode-locking regions of (\ref{eq:g}) with (\ref{eq:aLaRw})
corresponding to $\Sigma^+_{k,0}$ and $\Sigma^-_{k,0}$ of Fig.~\ref{fig:modeLockApprox20}, where $k \in \mathbb{Z}^+$.
For $\Sigma^+_{k,0}$, $\frac{a_R}{a_L} = -12 \frac{2}{3}$.
For $\Sigma^-_{k,0}$, $\frac{a_R}{a_L} = 21 \frac{1}{2}$.
In both plots the axes are oriented to match Fig.~\ref{fig:modeLockApprox20}.
\label{fig:modeLockSkewSaw20}
}
\end{center}
\end{figure}

\section{A centre manifold and approximately recurrent dynamics}
\label{sec:approxRecurrent}
\setcounter{equation}{0}

In this section we first discuss symmetries of the centre manifolds
associated with an $\cS$-shrinking point to explain why the majority of the calculations
involve the map $f^{\cS^{(-d)}}$, \S\ref{sub:fourCentreManifolds}.
We then derive new properties of $\cG^+[k,\Delta \ell]$ in the case $\Delta \ell \ge 0$, \S\ref{sub:Gplus}.
In \S\ref{sub:centre} we describe the dynamics of $f^{\cS^{(-d)}}$ in relation to its centre manifold and
in \S\ref{sub:Omega} identify a fundamental domain of this manifold.

\subsection{Centre manifolds}
\label{sub:fourCentreManifolds}

\begin{figure}[b!]
\begin{center}
\setlength{\unitlength}{1cm}
\begin{picture}(10,5)
\put(0,0){\includegraphics[height=5cm]{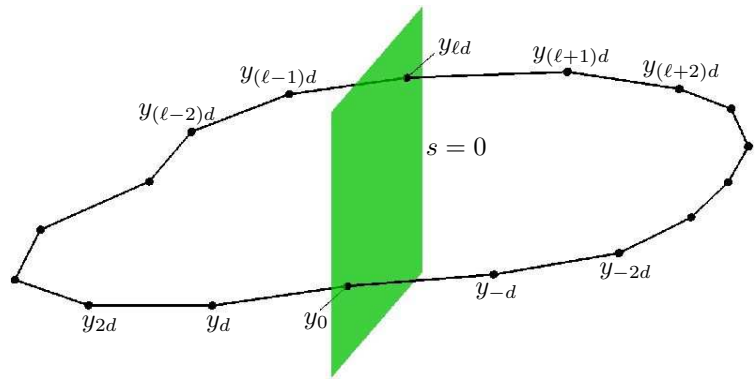}}
\put(5.52,3){\footnotesize $s=0$}
\put(3.9,.75){\footnotesize $y_0$}
\put(2.6,.72){\footnotesize $y_d$}
\put(.95,.72){\footnotesize $y_{2 d}$}
\put(1.7,3.54){\footnotesize $y_{(\ell-2) d}$}
\put(3.05,4.02){\footnotesize $y_{(\ell-1) d}$}
\put(5.68,4.4){\footnotesize $y_{\ell d}$}
\put(6.94,4.3){\footnotesize $y_{(\ell+1) d}$}
\put(8.4,4.1){\footnotesize $y_{(\ell+2) d}$}
\put(7.83,1.42){\footnotesize $y_{-2 d}$}
\put(6.18,1.15){\footnotesize $y_{-d}$}
\end{picture}
\caption{
A sketch of the uncountable collection of $\cS$-cycles of (\ref{eq:f}) at an $\cS$-shrinking point.
\label{fig:schemPolygon}
}
\end{center}
\end{figure}

At an $\cS$-shrinking point (\ref{eq:f}) has a unique $\cS^{\overline{0}}$-cycle, denoted $\{ y_i \}$.
There are also uncountably many $\cS$-cycles and
the union of these forms a non-planar polygon with vertices $y_i$, see Fig.~\ref{fig:schemPolygon}.

Each point on the line passing through $y_0$ and $y_d$ is a fixed point of $f^{\cS}$.
Of these, only points between $y_0$ and $y_d$ are admissible
(i.e.~are fixed points of $f^n$).
More generally, for each $j = 0,1,\ldots,n-1$, every point on the line passing through
$y_j$ and $y_{(d+j) \,{\rm mod}\, n}$ is a fixed point of $f^{\cS^{(j)}}$.
Each line is a centre manifold persisting as an extended centre manifold
for parameter values near the $\cS$-shrinking point.

As shown in \cite{Si17c}, the dynamics near an $\cS$-shrinking point can be analysed by working with
these (extended) centre manifolds for the four values $j \in \{ 0, (\ell-1)d, \ell d, -d \}$,
where from now on we omit ``${\rm mod\,} n$'' in indices for brevity.
These four values of $j$ correspond to the four line segments of the polygon that intersect the switching manifold.
Also, the four curves that bound the mode-locking region associated with the $\cS$-shrinking point
are where the $j^{\rm th}$ point of an $\cS$-cycle lies on the switching manifold for these four values of $j$.

For the remainder of this paper we work only with the $j = -d$ centre manifold.
As explained in the next section, we can use this manifold to extract information about
$\cG^+[k,\Delta \ell]$-cycles with $\Delta \ell \ge 0$.
Information about $\cG^+[k,\Delta \ell]$-cycles with $\Delta \ell < 0$ and $\cG^-[k,\Delta \ell]$-cycles
can be deduced by utilising two forms of symmetry associated with shrinking points.
First, we can simply swap the symbols $L$ and $R$ throughout the analysis.
More accurately, if we flip all the symbols of $\cF[\ell,m,n]$
(from $L$ to $R$ and vice-versa) we produce $\cF[n-\ell,m,n]^{(\ell d)}$.
Second, rather than viewing $\cF[\ell,m,n]$ as clockwise rotation with rotation number $\frac{m}{n}$,
we can view it as anticlockwise rotation with rotation number $\frac{n-m}{n}$.
This corresponds to the identity $\cF[\ell,n-m,n] = \cF[\ell,m,n]^{((\ell-1)d)}$.
It follows that by replacing $\ell$ and $m$ with $n-\ell$ and $n-m$, respectively, in the results below
we can generate analogous results for any $\cG^\pm[k,\Delta \ell]$-cycle.



\subsection{Properties of $\cG^+[k,\Delta \ell]$ with $\Delta \ell \ge 0$}
\label{sub:Gplus}

Here we provide three symbolic results for $\cG^+[k,\Delta \ell]$ with $\Delta \ell \ge 0$
obtained in a straight-forward manner from formulas given in \cite{Si17c}.
Here it is helpful to work with words rather than sequences.
As explained in \cite{Si17c}, any periodic symbol sequence $\cS$ of minimal period $n$
is given by the infinite repetition of the primitive word $\cS_0 \cdots \cS_{n-1}$.
In this way there is a one-to-one correspondence between periodic symbol sequences and primitive words,
and so we also denote the word $\cS_0 \cdots \cS_{n-1}$ by $\cS$.

For any $\cS = \cF[\ell,m,n]$, we define the words
\begin{align}
\cX &\defeq \cS_0 \cdots \cS_{(\ell d-1) {\rm \,mod\,} n}
\;, \label{eq:X} \\
\cY &\defeq \cS_{\ell d {\rm \,mod\,} n} \cdots \cS_{n-1}
\;, \label{eq:Y} \\
\hat{\cX} &\defeq \cS_0 \cdots \cS_{(-d-1) {\rm \,mod\,} n}
\;. \label{eq:hatX}
\end{align}
We first show that the word $\cG^+[k,\Delta \ell]$ ends in powers of $\cS^{(-d)}$.

\begin{lemma}
For all $k \in \mathbb{Z}^+$ and $\Delta \ell = 0,\ldots,k-1$,
\begin{equation}
\cG^+[k,\Delta \ell] = \left( \cX \cY^{\overline{0}} \right)^{\Delta \ell}
\hat{\cX} \left( \cS^{(-d)} \right)^{k-\Delta \ell}.
\label{eq:cGplusFormula}
\end{equation}
\label{le:cGplusFormula}
\end{lemma}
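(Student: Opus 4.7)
The plan is to convert the defining condition for $\cG^+[k,\Delta\ell]$ into a form that exposes the periodic structure of the right-hand side, and then verify the identity by induction on $\Delta\ell$.

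The key preliminary is that the Farey-neighbor relation $nm^+ - mn^+ = 1$ immediately yields $nm_k^+ - mn_k^+ = 1$, hence $nm_k^+ \equiv 1 \pmod{n_k^+}$ and $d_k^+ = n$. Together with $n^+ = n - d$, $m^+ = m - c$ (where $cn = dm - 1$), and $\ell^+ = \ell - q$ (where $\ell d = qn + u_0$, $u_0 := \ell d \bmod n$), this is essentially all the scalar information needed. Inverting (\ref{eq:rss}) through $d_k^+ = n$, the $L$-positions of $\cG^+[k,\Delta\ell]$ are exactly $\{rn \bmod n_k^+ : 0 \le r < \ell_k^+ + \Delta\ell\}$.

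Next I would simplify the RHS using the identity $\hat{\cX}\cdot\cS^{(-d)} = \cS\cdot\hat{\cX}$, obtained by writing out both sides (they spell $\cS_0\cdots\cS_{n-1}\cS_0\cdots\cS_{n-d-1}$). Iterating gives $\hat{\cX}(\cS^{(-d)})^{k-\Delta\ell} = \cS^{k-\Delta\ell}\hat{\cX}$. Since $\cX\cY^{\overline{0}}$ is $\cS$ with its single $R$ at position $u_0$ (note $u_0 m \equiv \ell \pmod n$) flipped to an $L$, the RHS is the length-$n_k^+$ prefix of the periodic extension of $\cS$ with exactly $\Delta\ell$ positions flipped from $R$ to $L$, at indices $u_0, u_0 + n, \ldots, u_0 + (\Delta\ell - 1)n$.

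Induction on $\Delta\ell$ then finishes the proof. For the base case $\Delta\ell = 0$, set $i = rn \bmod n_k^+$ and $a := \lfloor rn/n_k^+ \rfloor$; reducing modulo $n$ gives $i \equiv -an^+ \pmod n$, whence $(i \bmod n)\,m \bmod n \equiv -an^+ m \equiv a \pmod n$ using $mn^+ \equiv -1 \pmod n$. Since $a$ is non-decreasing in $r$, the condition $\cS_{i\bmod n} = L$ (i.e.\ $a < \ell$) is equivalent to $r < \ell n_k^+/n = k\ell + \ell^+ - u_0/n$, which for $u_0 > 0$ reduces to $r < \ell_k^+$. For the inductive step, going from $\Delta\ell$ to $\Delta\ell + 1$ adds one new $L$-position on the LHS, namely $(\ell_k^+ + \Delta\ell) n \bmod n_k^+$; expanding and applying $kn \equiv -n^+ \pmod{n_k^+}$ together with the preliminaries reduces this to $u_0 + \Delta\ell \cdot n$, matching precisely the next flipped position on the RHS.

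The main obstacle is bookkeeping: the cast of interlocking quantities ($\ell, m, n, \ell^\pm, m^\pm, n^\pm, d, d_k^+, u_0, q, c$) is large, but everything flows from the single Farey identity $nm^+ - mn^+ = 1$ applied consistently. The technical core is the identification $d_k^+ = n$; once it is in hand the rest is routine algebra, and the strict inequality in the base case holds because $u_0 > 0$ (which follows from $2 \le \ell \le n-2$ and $\gcd(d,n) = 1$).
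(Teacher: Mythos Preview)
Your proof is correct, but it takes a considerably longer route than the paper's. The paper simply invokes Proposition~4.8 of \cite{Si17c}, which already provides the formula $\cG^+[k,\Delta\ell] = (\cX\cY^{\overline{0}})^{\Delta\ell}\,\cS^{k-\Delta\ell}\,\hat{\cX}$, and then applies the same identity $\cS\hat{\cX} = \hat{\cX}\cS^{(-d)}$ that you derive. Your argument effectively reproves that proposition from scratch: you compute $d_k^+ = n$, enumerate the $L$-positions of $\cG^+[k,\Delta\ell]$ as $\{rn \bmod n_k^+ : 0 \le r < \ell_k^+ + \Delta\ell\}$, and verify by induction that these coincide with the $L$-positions of the claimed word. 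The payoff of your approach is self-containment --- you do not lean on the companion paper --- whereas the paper's two-line proof is economical precisely because the symbolic groundwork has been laid elsewhere. Both arguments hinge on the same Farey identity and the same cyclic-shift relation for $\hat{\cX}$; yours just unpacks what the citation encapsulates.
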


\begin{proof}
By Proposition 4.8 of \cite{Si17c},
$\cG^+[k,\Delta \ell] = \left( \cX \cY^{\overline{0}} \right)^{\Delta \ell} \cS^{k-\Delta \ell} \hat{\cX}$.
This can rewritten as (\ref{eq:cGplusFormula}) because $\cS \hat{\cX} = \hat{\cX} \cS^{(-d)}$.
\end{proof}

The next result shows that if we flip the first symbol of $\cG^+[k,\Delta \ell]$
the result is a cyclic permutation of $\cG^+[k,\Delta \ell-1]$.

\begin{lemma}
For all $k \in \mathbb{Z}^+$ and $\Delta \ell = 0,\ldots,k-1$,
\begin{equation}
\cG^+[k,\Delta \ell]^{\overline{0}} = \cG^+[k,\Delta \ell-1]^{\left( -d_k^+ \right)} \;.
\label{eq:cGplus0bar}
\end{equation}
\label{le:cGplus0bar}
\end{lemma}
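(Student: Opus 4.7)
The crux is the arithmetic identity $d_k^+ = n$, valid for all $k \ge 1$ since $n < n_k^+$. This follows from the Farey bookkeeping $m^+ n^- - m^- n^+ = 1$ via the one-line calculation $m_k^+\, n - m\, n_k^+ = (km + m^+) n - m(kn + n^+) = m^+ n - m n^+ = m^+ n^- - m^- n^+ = 1$, so $m_k^+ \cdot n \equiv 1 \pmod{n_k^+}$, and $d_k^+ \in \{1,\ldots,n_k^+ - 1\}$ then forces $d_k^+ = n$.

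Granted this, I would verify the claimed equality entry by entry using definition (2.16). Let $v_i \defeq i m_k^+ \bmod n_k^+$. Then (2.16) says $\cG^+[k,\Delta\ell]_i = L$ iff $v_i < \ell_k^+ + \Delta\ell$, whereas $\cG^+[k,\Delta\ell-1]^{(-d_k^+)}_i = \cG^+[k,\Delta\ell-1]_{(i-n) \bmod n_k^+}$ is $L$ iff $(v_i - 1) \bmod n_k^+ < \ell_k^+ + \Delta\ell - 1$, having used $(i - n) m_k^+ \equiv v_i - 1 \pmod{n_k^+}$.

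For $i \ne 0$, coprimality of $m_k^+$ and $n_k^+$ forces $v_i \in \{1,\ldots,n_k^+-1\}$, hence $(v_i - 1) \bmod n_k^+ = v_i - 1$ and the two inequalities $v_i < \ell_k^+ + \Delta\ell$ and $v_i - 1 < \ell_k^+ + \Delta\ell - 1$ coincide, so both sides agree. For $i = 0$ the left side equals $R$ by the definition of $\overline{0}$-flipping; on the right, $v_0 = 0$ gives $v_0 - 1 \equiv n_k^+ - 1 \pmod{n_k^+}$, so the right side is $L$ iff $\ell_k^+ + \Delta\ell > n_k^+$, which fails since $\ell_k^+ + \Delta\ell < n_k^+$ is needed for $\cF[\ell_k^+ + \Delta\ell, m_k^+, n_k^+]$ to contain any $R$ symbols at all; hence the right side is also $R$.

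The arithmetic identity $d_k^+ = n$ is the only subtle step; the rest is a direct reading of (2.16). An alternative for $\Delta\ell \ge 1$ is to invoke Lemma 3.1 at both $\Delta\ell$ and $\Delta\ell - 1$ and use $\cX^{\overline{0}} \cY^{\overline{0}} = \cS^{\overline{0}\,\overline{\ell d}} = \cS^{(-d)}$ (from (2.12)) to recognise that flipping the leading symbol of $\cG^+[k,\Delta\ell]$ moves one copy of $\cS^{(-d)}$ cyclically from the end of $\cG^+[k,\Delta\ell - 1]$ to its front, a left shift by $-n$; however, since Lemma 3.1 does not cover $\Delta\ell = 0$, the index-by-index argument is preferable because it handles every admissible value of $\Delta\ell$ uniformly.
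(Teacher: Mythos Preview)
Your proof is correct. The paper's own proof is a one-liner: it simply invokes the general identity $\cF[\ell,m,n]^{\overline{0}} = \cF[\ell-1,m,n]^{(-d)}$ (equation (4.3) of \cite{Si17c}) and applies it to $\cG^+[k,\Delta\ell] = \cF[\ell_k^+ + \Delta\ell, m_k^+, n_k^+]$, which yields the claim immediately with $d$ replaced by $d_k^+$.

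Your argument is effectively a self-contained re-derivation of that cited identity in the case at hand, worked out entry-by-entry from definition (2.16). This is more elementary in the sense that it does not appeal to the companion paper, and it makes transparent exactly which arithmetic facts are being used. You also go further than strictly necessary by establishing $d_k^+ = n$ explicitly; the lemma as stated only mentions $d_k^+$, so the paper's proof never needs this value here (though the paper does use $d_k^+ = n$ in the proof of Lemma~\ref{le:cGcSmdIdentity}). Your observation about the alternative via Lemma~\ref{le:cGplusFormula} is also accurate, including the caveat that it does not directly handle $\Delta\ell = 0$. In short: same mathematical content, but the paper outsources the combinatorics to a known identity while you reprove it from scratch.
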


\begin{proof}
The result is obtained by simply applying the general identity
$\cF[\ell,m,n]^{\overline{0}} = \cF[\ell-1,m,n]^{(-d)}$ (equation (4.3) of \cite{Si17c}) to
$\cG^+[k,\Delta \ell] = \cF \left[ \ell_k^+ + \Delta \ell, m_k^+, n_k^+ \right]$.
\end{proof}

The $j = -d$ centre manifold is useful for analysing $\cG^+[k,\Delta \ell]$-cycles with $\Delta \ell \ge 0$
because, by Lemma \ref{le:cGplusFormula}, the word $\cG^+[k,\Delta \ell]$ ends in a large power of $\cS^{(-d)}$.
Thus, under certain assumptions, the fixed point of $f^{\cG^+[k,\Delta \ell]}$ lies close to the
$j = -d$ centre manifold.
Also, by Lemma \ref{le:cGplus0bar}, $\cG^+[k,\Delta \ell - 1]$-cycles can be analysed by studying
the fixed point of $f^{\cG^+[k,\Delta \ell]^{\overline{0}}}$ which lies close to the
$j = -d$ centre manifold for the same reason.

We also provide an additional result used in later proofs.

\begin{lemma}
For all $k \in \mathbb{Z}^+$ and $\Delta \ell = 0,\ldots,k-1$,
\begin{equation}
\cS^{(-d)} \cG^+[k,\Delta \ell] = 
\cG^+[k,\Delta \ell]^{\overline{0} \,\overline{\left( \ell^+_k + \Delta \ell \right) d_k^+}} \cS^{(-d)} \;.
\label{eq:cGcSmdIdentity}
\end{equation}
\label{le:cGcSmdIdentity}
\end{lemma}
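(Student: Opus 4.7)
The plan is to reduce (\ref{eq:cGcSmdIdentity}) to a trivial cyclic rearrangement by combining the generic rotational identity (\ref{eq:rssMainIdentity}) with the tail structure of $\cG^+[k,\Delta \ell]$ encoded in Lemma \ref{le:cGplusFormula}. First I would invoke (\ref{eq:rssMainIdentity}), applied to $\cG^+[k,\Delta \ell] = \cF[\ell_k^+ + \Delta \ell, m_k^+, n_k^+]$, to convert the double flip on the right-hand side into a pure cyclic shift:
$$\cG^+[k,\Delta \ell]^{\overline{0}\,\overline{(\ell_k^+ + \Delta \ell) d_k^+}} = \cG^+[k,\Delta \ell]^{(-d_k^+)}.$$
It is therefore enough to establish the commutation relation $\cS^{(-d)} \cG^+[k,\Delta \ell] = \cG^+[k,\Delta \ell]^{(-d_k^+)} \cS^{(-d)}$.

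The next step is to pin down $d_k^+$ explicitly. Using the Farey property $m^+ n - m n^+ = 1$ for the right Farey root, a one-line calculation gives
$$m_k^+ \cdot n = (k m + m^+) n = k m n + 1 + m n^+ = m \, n_k^+ + 1 \equiv 1 \pmod{n_k^+},$$
so $m_k^+ n \equiv 1 \pmod{n_k^+}$, and hence, because $0 < n < n_k^+$, the modular inverse is $d_k^+ = n$.

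With $d_k^+ = n$ in hand the commutation relation becomes almost self-evident. Since $k - \Delta \ell \ge 1$, Lemma \ref{le:cGplusFormula} lets me factor $\cG^+[k,\Delta \ell] = \mathcal{A} \cdot \cS^{(-d)}$, where $\mathcal{A}$ is the prefix of length $n_k^+ - n$ consisting of $\left( \cX \cY^{\overline{0}} \right)^{\Delta \ell} \hat{\cX} \left( \cS^{(-d)} \right)^{k-\Delta \ell - 1}$. A cyclic shift by $-n$ simply transplants the trailing $\cS^{(-d)}$ to the front, so $\cG^+[k,\Delta \ell]^{(-n)} = \cS^{(-d)} \cdot \mathcal{A}$, and consequently
$$\cG^+[k,\Delta \ell]^{(-d_k^+)} \cS^{(-d)} = \cS^{(-d)} \mathcal{A} \cS^{(-d)} = \cS^{(-d)} \cG^+[k,\Delta \ell].$$
I expect no real obstacle here; the substantive input is (\ref{eq:rssMainIdentity}) (already in the paper) and the identification $d_k^+ = n$, and everything else is word concatenation.
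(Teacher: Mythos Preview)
Your proposal is correct and follows essentially the same route as the paper's proof: both invoke (\ref{eq:rssMainIdentity}) to rewrite the double flip as the cyclic shift $\cG^+[k,\Delta \ell]^{(-d_k^+)}$, use $d_k^+ = n$, and then read off the commutation relation from the tail factorisation of Lemma~\ref{le:cGplusFormula}. The only difference is cosmetic: the paper cites $d_k^+ = n$ from \cite{Si17c}, whereas you derive it directly from the Farey identity $m^+ n - m n^+ = 1$.
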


\begin{proof}
Since $d_k^+ = n$ (see Lemma 4.5 of \cite{Si17c}), by (\ref{eq:cGplusFormula}),
\begin{equation}
\cG^+[k,\Delta \ell]^{(-d_k^+)} = \cS^{(-d)} \left( \cX \cY^{\overline{0}} \right)^{\Delta \ell} \hat{\cX}
\left( \cS^{(-d)} \right)^{k-\Delta \ell-1} \;.
\label{eq:cGcSmdIdentityProof1}
\end{equation}
Also, by (\ref{eq:rssMainIdentity}),
\begin{equation}
\cG^+[k,\Delta \ell]^{\overline{0} \,\overline{\left( \ell^+_k + \Delta \ell \right) d_k^+}} =
\cG^+[k,\Delta \ell]^{(-d_k^+)} \;.
\label{eq:cGcSmdIdentityProof2}
\end{equation}
Then (\ref{eq:cGcSmdIdentity}) is obtained by 
combining (\ref{eq:cGplusFormula}), (\ref{eq:cGcSmdIdentityProof1}) and (\ref{eq:cGcSmdIdentityProof2}).
\end{proof}

\subsection{Dynamics near the centre manifold}
\label{sub:centre}


The map $f^{\cS^{(-d)}}$ is affine with matrix part $M_{\cS^{(-d)}}$.
At the $\cS$-shrinking point, $(\eta,\nu) = (0,0)$,
$M_{\cS^{(-d)}}$ has a unit eigenvalue with multiplicity one.
Thus in a neighbourhood of $(\eta,\nu) = (0,0)$,
$M_{\cS^{(-d)}}$ has an eigenvalue $\lambda(\eta,\nu)$ with $\lambda(0,0) = 1$
and a $C^K$ dependency on $\eta$ and $\nu$.
Throughout this neighbourhood let $\omega_{-d}^{\sf T}$ and $\zeta_{-d}$
be the corresponding left and right eigenvectors of $M_{\cS^{(-d)}}$ normalised by
\begin{equation}
e_1^{\sf T} \zeta_{-d} = 1 \;, \qquad
\omega_{-d}^{\sf T} \zeta_{-d} = 1 \;.
\end{equation}

At points $(\eta,\nu)$ where $\lambda \ne 1$, the $\cS$-cycle, denoted $\left\{ x^{\cS}_i \right\}$, is unique.
The point $x^{\cS}_{-d}$ is a fixed point of $f^{\cS^{(-d)}}$.
As shown in \cite{Si17c} (Lemma 7.2), the quantities
\begin{equation}
x^{\rm int}_{-d} \defeq \left( I - \zeta_{-d} e_1^{\sf T} \right) x^{\cS}_{-d} \;, \qquad
s^{\rm step}_{-d} \defeq (1-\lambda) e_1^{\sf T} x^{\cS}_{-d} \;,
\label{eq:varphimdgammamd}
\end{equation}
whose purpose is explained below,
can be extended in a $C^K$ fashion to a neighbourhood of $(\eta,\nu) = (0,0)$
(i.e.~including points where $\lambda = 1$).

We define two functions $u : \mathbb{R}^N \to \mathbb{R}$ and $v : \mathbb{R} \to \mathbb{R}^N$ by
\begin{equation}
u(x) \defeq \omega_{-d}^{\sf T} \left( x - x^{\rm int}_{-d} \right), \qquad
v(h) \defeq x^{\rm int}_{-d} + h \zeta_{-d} \;.
\label{eq:uv}
\end{equation}
The line
\begin{equation}
W^c \defeq \left\{ v(h) ~\middle|~ h \in \mathbb{R} \right\},
\label{eq:centreManifold}
\end{equation}
has direction $\zeta_{-d}$ and passes through $x^{\cS}_{-d}$ whenever $\lambda \ne 1$
and so is a centre manifold of $f^{\cS^{(-d)}}$.

Any $x \in \mathbb{R}^N$ can be uniquely written as
\begin{equation}
x = x^{\rm int}_{-d} + h \zeta_{-d} + q \;,
\label{eq:xEigCoords}
\end{equation}
where $h \in \mathbb{R}$ and $q \in \mathbb{R}^N$ satisfies $\omega_{-d}^{\sf T} q = 0$
(equation (\ref{eq:xEigCoords}) represents a partial decomposition by eigenspaces).
Moreover, $h$ and $q$ are given by
\begin{equation}
h = u(x), \qquad
q = \left( I - \zeta_{-d} \omega_{-d}^{\sf T} \right) \left( x - x^{\rm int}_{-d} \right),
\label{eq:hq}
\end{equation}
and $x \in W^c$ if and only if $q = 0$.


In terms of the decomposition (\ref{eq:xEigCoords}),
we can write an arbitrary power of $f^{\cS^{(-d)}}$ as
\begin{equation}
f^{\left( \cS^{(-d)} \right)^k}(x) = x^{\rm int}_{-d} +
\left( s^{\rm step}_{-d} \sum_{j=0}^{k-1} \lambda^j + h \lambda^k \right) \zeta_{-d} +
M_{\cS^{(-d)}}^k q \;,
\label{eq:centreDyns}
\end{equation}
valid for any $x \in \mathbb{R}^N$ and any $k \in \mathbb{Z}^+$.
This is illustrated in Fig.~\ref{fig:schemOmega}.
Equation (\ref{eq:centreDyns}) has the form (\ref{eq:xEigCoords})
because the middle term is a scalar multiple of $\zeta_j$
and the last term is orthogonal to $\omega_j$.

\begin{figure}[b!]
\begin{center}
\setlength{\unitlength}{1cm}
\begin{picture}(15,7.5)
\put(0,0){\includegraphics[height=7.5cm]{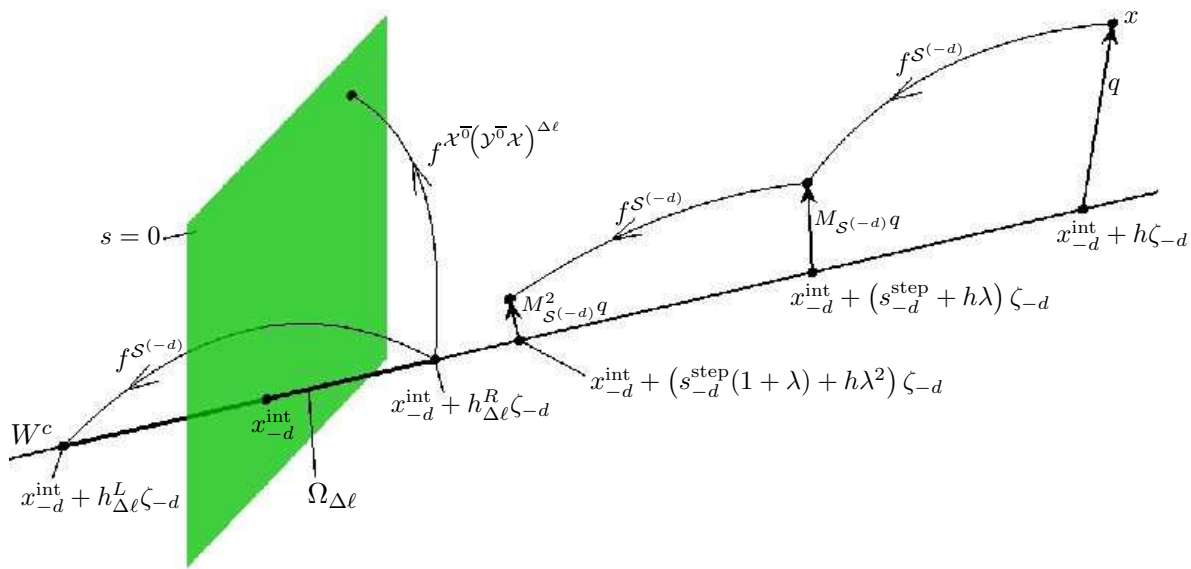}}
\put(1.2,4.37){\footnotesize $s=0$}
\put(.15,.91){\footnotesize $x^{\rm int}_{-d} + h^L_{\Delta \ell} \zeta_{-d}$}
\put(3.21,1.88){\footnotesize $x^{\rm int}_{-d}$}
\put(5.07,2.14){\footnotesize $x^{\rm int}_{-d} + h^R_{\Delta \ell} \zeta_{-d}$}
\put(7.72,2.44){\footnotesize $x^{\rm int}_{-d} + \left( s^{\rm step}_{-d} (1+\lambda) + h \lambda^2 \right) \zeta_{-d}$}
\put(10.38,3.56){\footnotesize $x^{\rm int}_{-d} + \left( s^{\rm step}_{-d} + h \lambda \right) \zeta_{-d}$}
\put(13.92,4.4){\footnotesize $x^{\rm int}_{-d} + h \zeta_{-d}$}
\put(1.4,2.74){\footnotesize $f^{\cS^{(-d)}}$}
\put(8.04,4.7){\footnotesize $f^{\cS^{(-d)}}$}
\put(11.77,6.68){\footnotesize $f^{\cS^{(-d)}}$}
\put(5.55,5.48){\footnotesize $f^{\cX^{\overline{0}} \!\left( \cY^{\overline{0}} \cX \right)^{\Delta \ell}}$}
\put(14.6,6.42){\footnotesize $q$}
\put(10.69,4.63){\scriptsize $M_{\cS^{(-d)}} q$}
\put(6.8,3.49){\scriptsize $M_{\cS^{(-d)}}^2 q$}
\put(14.81,7.32){\footnotesize $x$}
\put(.01,1.71){\small $W^c$}
\put(3.96,.93){\small $\Omega_{\Delta \ell}$}
\end{picture}
\caption{
A sketch illustrating dynamics near the centre manifold $W^c$ (\ref{eq:centreManifold}).
We show an arbitrary point $x$ and its first and second images under $f^{\cS^{(-d)}}$
and indicate the decomposition of these points as given by (\ref{eq:centreDyns}).
We also show the fundamental domain $\Omega_{\Delta \ell}$ (\ref{eq:Omega}).
\label{fig:schemOmega}
}
\end{center}
\end{figure}

In summary, the function $u$ returns the value of $h$ in the decomposition (\ref{eq:xEigCoords}),
while the function $v$ defines the centre manifold $W^c$.
The composition $v \circ u$ represents the projection onto $W^c$ in directions orthogonal to $\omega_{-d}$.
The point $x^{\rm int}_{-d}$ is the intersection of $W^c$ with the switching manifold,
while $s^{\rm step}_{-d} = u \!\left( f^{\cS^{(-d)}} \!\left( x^{\rm int}_{-d} \right) \right)$
is a scalar quantity used in (\ref{eq:centreDyns}) to describe the action of iteration under $f^{\cS^{(-d)}}$.

If $\rho_{\rm max} < 1$, where $\rho_{\rm max}$ is defined by (\ref{eq:rhoMax}),
then forward orbits of $f^{\cS^{(-d)}}$ approach $W^c$.
Indeed, by (\ref{eq:centreDyns}) the vector $M_{\cS^{(-d)}}^k q$
is the displacement of $\left( f^{\cS^{(-d)}} \right)^k(x)$ from $W^c$
in directions orthogonal to $\omega_{-d}$.
If $\rho_{\rm max} < 1$, then $M_{\cS^{(-d)}}^k q = \cO \!\left( \rho_{\rm max}^k \right)$
for any fixed $q$ orthogonal to $\omega_{-d}$.
In view of (\ref{eq:hq}) we can therefore write
\begin{equation}
M_{\cS^{(-d)}}^k \left( I - \zeta_{-d} \omega_{-d}^{\sf T} \right) = \cO \!\left( \rho_{\rm max}^k \right).
\label{eq:fastDyns}
\end{equation}

\subsection{An approximately recurrent line segment, $\Omega_{\Delta \ell}$}
\label{sub:Omega}

Here we identify a useful fundamental domain $\Omega_{\Delta \ell} \subset W^c$, then provide three lemmas.
Lemma \ref{le:denominator} gives an algebraic identity,
Lemma \ref{le:hLhR} indicates the rough range of $\eta$ and $\nu$ values relevant for $\Omega_{\Delta \ell}$, and
Lemma \ref{le:tildegJApproxFixedPoint} relates $\Omega_{\Delta \ell}$
to a boundary of the $\cG^+_k$-mode-locking region.

For any $\check{x} \in W^c$, consider a half-open line segment with endpoints $\check{x}$ and $f^{\cS^{(-d)}}(\check{x})$.
Assuming $\check{x} \ne x^{\cS}_{-d}$ (in the generic case $\lambda \ne 1$ that $x^{\cS}_{-d}$ exists),
all orbits of $f^{\cS^{(-d)}}$ in $W^c$ on one side of $x^{\cS}_{-d}$ have exactly one point
in this line segment, and in this sense the line segment is a fundamental domain.



We choose $\check{x} \in W^c$ such that its image under
$f^{\cX^{\overline{0}} \left( \cY^{\overline{0}} \cX \right)^{\Delta \ell}}$ lies on the switching manifold.
We do this because by (\ref{eq:cGplusFormula}) the first $\left( \ell^+_k + \Delta \ell \right) d_k^+$
symbols of $\cG^+[k,\Delta \ell]$ are $\left( \cX \cY^{\overline{0}} \right)^{\Delta \ell} \hat{\cX}$,
which, upon flipping the first symbol, can be rewritten as
$\cX^{\overline{0}} \left( \cY^{\overline{0}} \cX \right)^{\Delta \ell}$\removableFootnote{
Here we also need to use
\begin{equation}
\tilde{\ell} d_k^+ {\rm ~mod~} n_k^+ =
\left( \ell d {\rm ~mod~} n + \Delta \ell n \right) {\rm ~mod~} n_k^+ \;,
\label{eq:lkdkplus}
\end{equation}
given in \cite{Si17c}.
}.
Thus if there exists a $\cG^+[k,\Delta \ell]^{\overline{0}}$-cycle
for which $f^{\cX^{\overline{0}} \left( \cY^{\overline{0}} \cX \right)^{\Delta \ell}}
\!\left( x^{\cG^+[k,\Delta \ell]^{\overline{0}}}_0 \right)$ lies on the switching manifold,
then the $\cG^+[k,\Delta \ell]^{\overline{0}}$-cycle is also
a $\cG^+[k,\Delta \ell]^{\overline{0} \,\overline{\left( \ell^+_k + \Delta \ell \right) d_k^+}}$-cycle,
and by (\ref{eq:rssMainIdentity}) it is also a permutation of
a $\cG^+[k,\Delta \ell]$-cycle.

That is, we let $\check{x} = v \!\left( h^R_{\Delta \ell} \right)$,
where $h^R_{\Delta \ell} \in \mathbb{R}$ is defined by
\begin{equation}
e_1^{\sf T} f^{\cX^{\overline{0}} \left( \cY^{\overline{0}} \cX \right)^{\Delta \ell}}
\!\left( v \!\left( h^R_{\Delta \ell} \right) \right) = 0 \;.
\label{eq:hR}
\end{equation}
The image of $v \!\left( h^R_{\Delta \ell} \right)$ under $f^{\cS^{(-d)}}$ is
$v \!\left( h^L_{\Delta \ell} \right)$ where
\begin{equation}
h^L_{\Delta \ell} \defeq e_1^{\sf T} f^{\cS^{(-d)}}
\!\left( v \!\left( h^R_{\Delta \ell} \right) \right).
\label{eq:hL}
\end{equation}
Then our fundamental domain is
\begin{equation}
\Omega_{\Delta \ell} \defeq \left\{ v(h) ~\Big|~ 
h^L_{\Delta \ell} \le h < h^R_{\Delta \ell} \right\},
\label{eq:Omega}
\end{equation}
assuming $h^L_{\Delta \ell} < h^R_{\Delta \ell}$\removableFootnote{
This occurs whenever $s^{\rm step}_{-d} < 0$,
and, using a formula for $s^{\rm step}_{-d}$, we find that this is when $a \nu > 0$.
Originally I required $h^L_{\Delta \ell} \le 0$ and $h^R_{\Delta \ell} \ge 0$,
but this was problematic in my proofs below because these inequalities may not be true
very near the upper and lower boundaries of $\Sigma^+_{k,\Delta \ell}$.
}.

Before we continue we must verify that $h^R_{\Delta \ell}$ is unique and well-defined by (\ref{eq:hR}).
To do this we solve for $h^R_{\Delta \ell}$ in (\ref{eq:hR}) giving
\begin{equation}
h^R_{\Delta \ell} = -\frac{e_1^{\sf T} f^{\cX^{\overline{0}} \left( \cY^{\overline{0}} \cX \right)^{\Delta \ell}} \!\left( x^{\rm int}_{-d} \right)}
{e_1^{\sf T} M_{\cX^{\overline{0}} \left( \cY^{\overline{0}} \cX \right)^{\Delta \ell}} \zeta_{-d}} \;,
\label{eq:hLhRProof2}
\end{equation}
assuming that the denominator of (\ref{eq:hLhRProof2}) is non-zero.
Next we derive an explicit expression for the leading order component of this denominator.
Then Lemma \ref{le:hLhR} gives formulas for $h^L_{\Delta \ell}$ and $h^R_{\Delta \ell}$
and is proved in Appendix \ref{app:proofs}.


\begin{lemma}~\\
Suppose $\det(J) \ne 0$.
Then for any $\Delta \ell \ge 0$,
\begin{equation}
e_1^{\sf T} M_{\cX^{\overline{0}} \left( \cY^{\overline{0}} \cX \right)^{\Delta \ell}} \zeta_{-d} =
\begin{cases}
\frac{t_{(\ell-1)d}}{t_{-d}} + \cO(\eta,\nu), & \Delta \ell = 0 \;, \\
\frac{c t_{(\ell-1)d}}{a t_{-d}} \left( \kappa^+_{\Delta \ell} - \kappa^+_{\Delta \ell - 1} \right) + \cO(\eta,\nu),
& \Delta \ell \ge 1 \;.
\end{cases}
\label{eq:denominator}
\end{equation}
\label{le:denominator}
\end{lemma}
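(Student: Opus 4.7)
The plan is to verify both equalities at the shrinking point $(\eta,\nu)=(0,0)$ and extend by the $C^K$ dependence of all quantities on parameters. Two preliminary ingredients recur: (i) at the shrinking point, $\lambda=1$, and since the center eigenvector $\zeta_{-d}$ is tangent to the polygon edge $[y_{-d},y_0]$, the normalizations $e_1^{\sf T}\zeta_{-d}=1$ and $t_0=0$ pin down $\zeta_{-d}=(y_{-d}-y_0)/t_{-d}$; (ii) for any word $W$ the matrix part of $f^W$ acts on differences, $M_W(v_1-v_2)=f^W(v_1)-f^W(v_2)$.

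For the base case $\Delta\ell=0$, I would first establish the symbolic identity $\cX^{\overline{0}}_i = \cS_{(i-d)\bmod n}$ for $i=0,\ldots,(\ell d\bmod n)-1$, directly from (\ref{eq:rss}). At $i=0$ both sides equal $R$: $\cS_0=L$ because $0<\ell$, so $\overline{\cS_0}=R$, and $(-d)m\equiv n-1\ge\ell\pmod n$ gives $\cS_{-d}=R$. For $i\ge1$, the equality $\cS_i=\cS_{i-d}$ fails only when $im\bmod n\in\{0,\ell\}$, and these two residues are attained precisely at $i=0$ and $i=\ell d\bmod n$, both outside the stated range. Using this identity, applying $f^{\cX^{\overline{0}}}$ to $y_{-d}$ threads the polygon vertices and terminates at $y_{(\ell-1)d}$ (any intermediate passage through $y_0$ or $y_{\ell d}$ is harmless, as the matched symbols coincide there with the cycle's own symbols). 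Also $f^{\cX^{\overline{0}}}(y_0)=f^{\cX}(y_0)=y_{\ell d}$ since $y_0$ lies on the switching manifold. Combining these with ingredient (ii) and using $t_{\ell d}=0$,
\[
e_1^{\sf T} M_{\cX^{\overline{0}}}\zeta_{-d}=\frac{e_1^{\sf T}\bigl(y_{(\ell-1)d}-y_{\ell d}\bigr)}{t_{-d}}=\frac{t_{(\ell-1)d}}{t_{-d}}
\]
at the shrinking point, yielding the $\Delta\ell=0$ equality by smoothness.

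For $\Delta\ell\ge1$, I would factor $M_{\cX^{\overline{0}}(\cY^{\overline{0}}\cX)^{\Delta\ell}} = (M_{\cY^{\overline{0}}\cX})^{\Delta\ell} M_{\cX^{\overline{0}}}$ and reuse the base-case expression. The companion symbolic identity $\cY^{\overline{0}}_i = \cS_{((\ell-1)d+i)\bmod n}$ (proven by the same residue analysis, now with $jm\equiv\ell+im\pmod n$) yields $f^{\cY^{\overline{0}}}(y_{(\ell-1)d})=y_{-d}$, so the first application of $M_{\cY^{\overline{0}}\cX}$ sends $y_{(\ell-1)d}-y_{\ell d}$ to $f^{\cX}(y_{-d})-y_{\ell d}$; also $f^{\cY^{\overline{0}}\cX}(y_{\ell d})=y_{\ell d}$. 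Beyond this first iterate the images leave the polygon, so I would decompose each intermediate vector along $\zeta_{-d}$ and its $\omega_{-d}$-orthogonal complement (cf.\ (\ref{eq:xEigCoords})), allowing $(M_{\cY^{\overline{0}}\cX})^{\Delta\ell-1}$ to be treated as perturbed center-manifold dynamics producing a partial sum whose $\Delta\ell$-th incremental contribution matches $\kappa^+_{\Delta\ell}-\kappa^+_{\Delta\ell-1}$ as defined in Appendix \ref{app:formulas}. The principal obstacle is precisely this matching with the exact prefactor $ct_{(\ell-1)d}/(at_{-d})$: the factor $a=\det(I-M_{\cS^{\overline{0}}})$ enters upon projecting iterates onto $\zeta_{-d}$, while $c=\prod_{j\ge2}(1-\rho_j)$ absorbs the non-unit eigenvalues of $M_{\cS^{(-d)}}$, and the identity (\ref{eq:fourtIdentity}) together with the scalar identities imported from \cite{Si17c} completes the alignment.
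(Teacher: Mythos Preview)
Your $\Delta\ell=0$ argument is correct and in fact more self-contained than the paper's, which simply quotes an identity from \cite{Si17c}. The identification $\zeta_{-d}=(y_{-d}-y_0)/t_{-d}$ and the symbolic check that $\cX^{\overline{0}}$ pushes $y_{-d}$ to $y_{(\ell-1)d}$ are sound.

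Your $\Delta\ell\ge1$ argument, however, has a genuine gap. Factoring as $(M_{\cY^{\overline{0}}\cX})^{\Delta\ell}M_{\cX^{\overline{0}}}$ and then proposing to ``decompose each intermediate vector along $\zeta_{-d}$ and its $\omega_{-d}$-orthogonal complement'' does not work as stated: $\zeta_{-d}$ is an eigenvector of $M_{\cS^{(-d)}}$, not of $M_{\cY^{\overline{0}}\cX}$ (which is a cyclic permutation of $M_{\cS^{\overline{\ell d}}}$, a different matrix altogether), so this decomposition is not invariant under the iteration you want to perform. Moreover, $\kappa^+_{\Delta\ell}$ is defined as $u_0^{\sf T}M_{\cS^{\overline{\ell d}}}^{\Delta\ell}v_{-d}$, involving the left eigenvector $u_0$ of $M_{\cS}$ and powers of $M_{\cS^{\overline{\ell d}}}$ --- neither of which appears in your decomposition. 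Your closing paragraph acknowledges that matching the prefactor is the ``principal obstacle'' but does not resolve it.

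The paper closes this gap with two ingredients you are missing. First, the word reorganisation
\[
\cX^{\overline{0}}(\cY^{\overline{0}}\cX)^{\Delta\ell}
=\cX^{\overline{0}}\cY^{\overline{0}}(\cX\cY^{\overline{0}})^{\Delta\ell-1}\cX
=\cS^{(-d)}(\cS^{\overline{\ell d}})^{\Delta\ell-1}\cX,
\]
which places $M_{\cS^{(-d)}}$ on the right (so it acts on $\zeta_{-d}$ as the identity at the shrinking point) and produces $M_{\cS^{\overline{\ell d}}}^{\Delta\ell-1}$ explicitly. Second, the identity $u_0^{\sf T}(I-M_{\cS^{\overline{\ell d}}})=\tfrac{bt_d}{ct_{(\ell+1)d}}\,e_1^{\sf T}M_{\cX}$ (equation (A.34) of \cite{Si17c}), which converts the leftmost $e_1^{\sf T}M_{\cX}$ into $u_0^{\sf T}(I-M_{\cS^{\overline{\ell d}}})$ and thereby produces the telescoping difference $\kappa^+_{\Delta\ell-1}-\kappa^+_{\Delta\ell}$ directly from the definition (\ref{eq:kappaPlus}); identity (\ref{eq:fourtIdentity}) then fixes the prefactor. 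Without these two steps your outline does not reach the stated formula.
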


\begin{proof}
If $\Delta \ell = 0$, then the denominator of (\ref{eq:hLhRProof2}) is
$e_1^{\sf T} M_{\cX^{\overline{0}}} \zeta_{-d}$
and the result follows immediately from equation (6.9) of \cite{Si17c}.
If $\Delta \ell \ge 1$, then at $(\eta,\nu) = (0,0)$ we have
\begin{equation}
u_0^{\sf T} \left( I - M_{\cS^{\overline{\ell d}}} \right) =
\frac{b t_d}{c t_{(\ell+1)d}} \,e_1^{\sf T} M_{\cX} \;,
\label{eq:u0Identity}
\end{equation}
where $u_0^{\sf T}$ is the left eigenvector of $M_{\cS}$ corresponding to the unit eigenvalue $1$
(this is equation (A.34) of \cite{Si17c}).
Also
\begin{equation}
\cX^{\overline{0}} \left( \cY^{\overline{0}} \cX \right)^{\Delta \ell} =
\cX^{\overline{0}} \cY^{\overline{0}} \left( \cX \cY^{\overline{0}} \right)^{\Delta \ell-1} \cX =
\cS^{(-d)} \left( \cS^{\overline{\ell d}} \right)^{\Delta \ell-1} \cX \;,
\nonumber
\end{equation}
and so by (\ref{eq:u0Identity}) at $(\eta,\nu) = (0,0)$ we have
\begin{align}
e_1^{\sf T} M_{\cX^{\overline{0}} \left( \cY^{\overline{0}} \cX \right)^{\Delta \ell}} \zeta_{-d}
&= \frac{c t_{(\ell+1)d}}{b t_d} \,u_0^{\sf T} \left( I - M_{\cS^{\overline{\ell d}}} \right)
M_{\cS^{\overline{\ell d}}}^{\Delta \ell-1} \zeta_{-d} \;.
\nonumber
\end{align}
This yields (\ref{eq:denominator}) for $\Delta \ell \ge 1$ by using
(\ref{eq:fourtIdentity}) and the definition of $\kappa^+_{\Delta \ell}$ (\ref{eq:kappaPlus}).
\end{proof}

\begin{lemma}~\\
Suppose $\det(J) \ne 0$.
Then for any $\Delta \ell \ge 0$,
\begin{align}
h^L_{\Delta \ell} &= \begin{cases}
\eta - \frac{t_d \kappa^+_{-1}}{t_{(\ell+1)d}} \nu +
\cO \!\left( \left( \eta, \nu \right)^2 \right), & \Delta \ell = 0 \;, \\
\frac{a}{c \left( \kappa^+_{\Delta \ell} - \kappa^+_{\Delta \ell-1} \right)}
\left( \eta - \frac{t_{-d} \kappa^+_{\Delta \ell-1}}{t_{(\ell-1)d}} \nu \right) +
\cO \!\left( \left( \eta, \nu \right)^2 \right), & \Delta \ell \ge 1 \;,
\end{cases} \label{eq:hL2} \\
h^R_{\Delta \ell} &= \begin{cases}
\eta - \frac{t_{-d} \kappa^+_0}{t_{(\ell-1)d}} \nu +
\cO \!\left( \left( \eta, \nu \right)^2 \right), & \Delta \ell = 0 \;, \\
\frac{a}{c \left( \kappa^+_{\Delta \ell} - \kappa^+_{\Delta \ell-1} \right)}
\left( \eta - \frac{t_{-d} \kappa^+_{\Delta \ell}}{t_{(\ell-1)d}} \nu \right) +
\cO \!\left( \left( \eta, \nu \right)^2 \right), & \Delta \ell \ge 1 \;,
\end{cases} \label{eq:hR2}
\end{align}
assuming $\kappa^+_{\Delta \ell} \ne \kappa^+_{\Delta \ell-1}$ in the case $\Delta \ell \ge 1$.
\label{le:hLhR}
\end{lemma}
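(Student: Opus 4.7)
The approach is to work from the explicit expression (\ref{eq:hLhRProof2}) for $h^R_{\Delta \ell}$, whose denominator is already controlled by Lemma \ref{le:denominator}, and then to derive $h^L_{\Delta \ell}$ from the affine action of $f^{\cS^{(-d)}}$ on the centre manifold $W^c$. The main task is therefore to expand the numerator
\[
N_{\Delta \ell}(\eta,\nu) \defeq e_1^{\sf T} f^{\cX^{\overline{0}} \left( \cY^{\overline{0}} \cX \right)^{\Delta \ell}}\!\left( x^{\rm int}_{-d} \right)
\]
to first order in $(\eta,\nu)$ about the $\cS$-shrinking point.

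First I would verify that $N_{\Delta \ell}$ vanishes at the shrinking point. There $x^{\rm int}_{-d} = y_0$, since $W^c$ at the shrinking point is the polygon edge from $y_{-d}$ to $y_0$ in Fig.~\ref{fig:schemPolygon} and $y_0$ is its intersection with $\{s=0\}$. Because both $y_0$ and $y_{\ell d}$ lie on $\{s=0\}$, flipping the leading symbol of $\cX$ or of $\cY$ is inconsequential at those two points, so iterating $\cX^{\overline{0}}$ and $\cY^{\overline{0}}$ produces the alternating orbit $y_0 \mapsto y_{\ell d} \mapsto y_0 \mapsto \cdots$, all in $\{s=0\}$. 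Taylor's theorem then gives $N_{\Delta \ell} = \alpha_{\Delta \ell} \eta + \beta_{\Delta \ell} \nu + \cO((\eta,\nu)^2)$, and it remains to identify the linear coefficients. To do this I would differentiate both $M_{\cX^{\overline{0}} \left( \cY^{\overline{0}} \cX \right)^{\Delta \ell}}$ and $x^{\rm int}_{-d}$ with respect to $(\eta,\nu)$, invoking for $\Delta \ell \ge 1$ the factorisation $\cX^{\overline{0}} (\cY^{\overline{0}} \cX)^{\Delta \ell} = \cS^{(-d)} \left( \cS^{\overline{\ell d}} \right)^{\Delta \ell-1} \cX$ used in the proof of Lemma \ref{le:denominator}, together with (\ref{eq:u0Identity}) and the definition of $\kappa^+_{\Delta \ell}$ in Appendix \ref{app:formulas}. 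I expect the computation to yield $\alpha_{\Delta \ell} = -t_{(\ell-1)d}/t_{-d}$ and $\beta_{\Delta \ell} = \kappa^+_{\Delta \ell}$, uniformly in $\Delta \ell \ge 0$; dividing by the denominator from Lemma \ref{le:denominator} and simplifying via (\ref{eq:fourtIdentity}) then gives (\ref{eq:hR2}). The main obstacle is this bookkeeping: producing $\kappa^+_{\Delta \ell}$ rather than some other combination of $\kappa^+_j$ requires careful pairing of the derivative of the matrix product with the derivative of $x^{\rm int}_{-d}$.

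For $h^L_{\Delta \ell}$ I would apply (\ref{eq:centreDyns}) with $k = 1$ and $x = v(h^R_{\Delta \ell}) \in W^c$ (so $q = 0$) to obtain $f^{\cS^{(-d)}}(v(h^R_{\Delta \ell})) = v(s^{\rm step}_{-d} + \lambda h^R_{\Delta \ell})$. Applying $e_1^{\sf T}$ and using $e_1^{\sf T} v(h) = h$, together with $\lambda = 1 + \cO(\eta,\nu)$ and $h^R_{\Delta \ell} = \cO(\eta,\nu)$, yields $h^L_{\Delta \ell} = h^R_{\Delta \ell} + s^{\rm step}_{-d} + \cO((\eta,\nu)^2)$. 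The increment $s^{\rm step}_{-d}$ is independent of $\Delta \ell$ by its definition in (\ref{eq:varphimdgammamd}), and its first-order expansion (available from the expansions of $\lambda$ and $x^{\cS}_{-d}$ given in \cite{Si17c}) produces precisely the shift that converts the coefficient $\kappa^+_{\Delta \ell}$ appearing in $h^R_{\Delta \ell}$ into $\kappa^+_{\Delta \ell-1}$ in $h^L_{\Delta \ell}$, reproducing (\ref{eq:hL2}). The factor $\kappa^+_{\Delta \ell} - \kappa^+_{\Delta \ell-1}$ in the denominator of $h^R_{\Delta \ell}$ for $\Delta \ell \ge 1$ is exactly what makes this conversion consistent, which also explains the hypothesis $\kappa^+_{\Delta \ell} \ne \kappa^+_{\Delta \ell-1}$.
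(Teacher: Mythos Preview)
Your outline is sound, and your derivation of $h^L_{\Delta \ell}$ from $h^R_{\Delta \ell}$ via the affine action of $f^{\cS^{(-d)}}$ on $W^c$ matches the paper exactly: the paper also writes $h^L_{\Delta \ell} = s^{\rm step}_{-d} + \lambda h^R_{\Delta \ell}$ and plugs in the linear expansion of $s^{\rm step}_{-d}$ from \cite{Si17c}.

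Where you differ is in extracting the linear part of the numerator $N_{\Delta \ell}$. You propose direct differentiation of both the matrix product $M_{\cX^{\overline{0}}(\cY^{\overline{0}}\cX)^{\Delta\ell}}$ and the point $x^{\rm int}_{-d}$ with respect to $(\eta,\nu)$. The paper instead brings in the $\cS^{\overline{\ell d}}$-cycle and rewrites
\[
N_{\Delta\ell} \;=\; s^{\cS^{\overline{\ell d}}}_{\ell d}
\;+\; e_1^{\sf T} M_{\cX}\, M_{\cS^{\overline{\ell d}}}^{\Delta\ell}
\bigl(x^{\rm int}_{-d} - x^{\cS^{\overline{\ell d}}}_0\bigr),
\]
then evaluates this on the two boundary curves of the $\cS$-mode-locking region tangent to the axes at the shrinking point. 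On $\eta = \psi_1(\nu)$ one has $s^{\cS^{\overline{\ell d}}}_{\ell d} = 0$, so only the second summand survives; combining (\ref{eq:u0Identity}) with the identity $(I - M_{\cS^{\overline{\ell d}}})(x^{\rm int}_{-d} - x^{\cS^{\overline{\ell d}}}_0) = -s^{\rm step}_{-d}\zeta_{-d}$ yields the $\nu$-coefficient $\kappa^+_{\Delta\ell}$. On $\nu = \psi_2(\eta)$ one has $x^{\rm int}_{-d} = x^{\cS^{\overline{\ell d}}}_0$, so only $s^{\cS^{\overline{\ell d}}}_{\ell d}$ survives, and its expansion in $\eta$ is read off from \cite{SiMe10}. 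Your direct-differentiation route is legitimate and conceptually straightforward, but the paper's two-curve decomposition buys exactly the bookkeeping shortcut you flagged as the main obstacle: no parameter-derivatives of matrix products are ever taken, because on each curve one of the two summands vanishes identically and the other is already a scalar quantity with a known first-order expansion. (A minor point: the paper's computation gives the $\eta$-coefficient as $t_{(\ell-1)d}/t_{-d}$, opposite in sign to your expectation; this would sort itself out once you actually carry out the differentiation.)
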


Inside the $\cG^+_k$-mode-locking region and
between the $\cG^+[k,\Delta \ell]$ and $\cG^+[k,\Delta \ell - 1]$-shrinking points (if they exist),
there are $\cG^+[k,\Delta \ell]$ and $\cG^+[k,\Delta \ell - 1]$-cycles,
one of which is attracting (as determined by the sign of $a$).
The mode-locking region is bounded by a curve where $x^{\cG^+[k,\Delta \ell]}_0$ lies on the switching manifold,
and a curve where $x^{\cG^+[k,\Delta \ell]}_{\left( \ell^+_k + \Delta \ell - 1 \right) d^+_k}$
lies on the switching manifold.
These boundaries are where $P_{\cG^+[k,\Delta \ell]^{(i)}}$ is singular
for $i = 0$ and $i = \left( \ell^+_k + \Delta \ell - 1 \right) d^+_k$\removableFootnote{
In an earlier write-up I also showed that on the curve
$\det \!\left( P_{\cG^+[k,\Delta \ell]^{\left( \left( \tilde{\ell}-1 \right) d_k^+ \right)}} \right) = 0$
(specified by Theorem 2.1 of \cite{Si17c}), we have
\begin{equation}
\tilde{g}^R_{k,\Delta \ell} \!\left( h^R_{\Delta \ell} \right) =
h^R_{\Delta \ell} + \cO \!\left( \rho_{\rm max}^k \right)\;.
\label{eq:tildegRApproxFixedPoint}
\end{equation}
}



\begin{lemma}
Suppose $\det(J) \ne 0$ and $\rho_{\rm max} < 1$.
Then for any $\Delta \ell \ge 0$
there exists a neighbourhood of the $\cS$-shrinking point within which
$\det \!\left( P_{\cG^+[k,\Delta \ell]} \right) = 0$ implies
\begin{equation}
f^{\cG^+[k,\Delta \ell]} \!\left( x^{\rm int}_{-d} \right) = x^{\rm int}_{-d} + \cO \!\left( \rho_{\rm max}^k \right),
\label{eq:tildegLApproxFixedPoint}
\end{equation}
where $k \in \mathbb{Z}^+$.
\label{le:tildegJApproxFixedPoint}
\end{lemma}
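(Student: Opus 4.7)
The plan is to locate a fixed point $\tilde{x}$ of $f^{\cG^+[k,\Delta \ell]}$ that lies on the switching manifold when $\det(P_{\cG^+[k,\Delta \ell]}) = 0$, show $\tilde{x} = x^{\rm int}_{-d} + \cO(\rho_{\rm max}^k)$, and transfer this estimate to $x^{\rm int}_{-d}$ using the affine form of $f^{\cG^+[k,\Delta \ell]}$. The existence of such a $\tilde{x}$ follows from the characterisation of mode-locking region boundaries recalled just before the lemma: the curve $\det(P_{\cG^+[k,\Delta \ell]}) = 0$ is precisely where the $0$th point of a $\cG^+[k,\Delta \ell]$-cycle meets the switching manifold.

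To see that $\tilde{x}$ is $\cO(\rho_{\rm max}^k)$-close to $x^{\rm int}_{-d}$, I would invoke Lemma \ref{le:cGplusFormula} to write $\cG^+[k,\Delta \ell] = (\cX \cY^{\overline{0}})^{\Delta \ell} \hat{\cX} (\cS^{(-d)})^{k-\Delta \ell}$, set $\tilde{y} \defeq f^{(\cX \cY^{\overline{0}})^{\Delta \ell} \hat{\cX}}(\tilde{x})$, and apply the closed-form iterate formula (\ref{eq:centreDyns}) to $\tilde{x} = f^{(\cS^{(-d)})^{k-\Delta \ell}}(\tilde{y})$. This yields a decomposition of $\tilde{x} - x^{\rm int}_{-d}$ as a $\zeta_{-d}$-component plus a transverse piece $M_{\cS^{(-d)}}^{k-\Delta \ell} q$, which is $\cO(\rho_{\rm max}^k)$ by (\ref{eq:fastDyns}) since $\tilde{y}$ (hence $q$) is bounded on a fixed neighbourhood of the shrinking point. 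Projecting with $e_1^{\sf T}$ and using $e_1^{\sf T} \tilde{x} = e_1^{\sf T} x^{\rm int}_{-d} = 0$ together with $e_1^{\sf T} \zeta_{-d} = 1$ then forces the $\zeta_{-d}$-component to be $\cO(\rho_{\rm max}^k)$ as well.

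Finally, affineness of $f^{\cG^+[k,\Delta \ell]}$ combined with the fixed-point property of $\tilde{x}$ gives
\[
f^{\cG^+[k,\Delta \ell]}(x^{\rm int}_{-d}) - x^{\rm int}_{-d} = (M_{\cG^+[k,\Delta \ell]} - I)(x^{\rm int}_{-d} - \tilde{x}),
\]
and factoring $M_{\cG^+[k,\Delta \ell]} = M_{\cS^{(-d)}}^{k-\Delta \ell} M_{(\cX \cY^{\overline{0}})^{\Delta \ell} \hat{\cX}}$ reduces the required estimate to bounding $\|M_{\cS^{(-d)}}^{k-\Delta \ell}\|$ uniformly in $k$. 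The main obstacle is the centre eigenvalue $\lambda$ of $M_{\cS^{(-d)}}$: since $\lambda(0,0) = 1$, a generic fixed neighbourhood only gives $\lambda = 1 + \cO(\eta,\nu)$, which a priori allows $\lambda^k$ to blow up. The saving observation is that the constraint $\det(P_{\cG^+[k,\Delta \ell]}) = 0$ restricts the parameters to the curve described by (\ref{eq:nearbyCurve}), which lies within $\cO(1/k)$ of the $\cS$-shrinking point; thus $\lambda = 1 + \cO(1/k)$ and $|\lambda|^{k-\Delta \ell}$ stays bounded by a Bernoulli-type estimate, while the remaining eigenvalues $\rho_j$ of $M_{\cS^{(-d)}}$ contribute powers bounded by $\rho_{\rm max}^{k-\Delta \ell}$. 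This $k$-dependent confinement of the parameter region is the crux of the argument.
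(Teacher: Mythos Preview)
Your argument follows the same route as the paper's: locate a $\cG^+[k,\Delta\ell]$-fixed point $\tilde{x}$ on the switching manifold, show it is $\cO(\rho_{\rm max}^k)$-close to $x^{\rm int}_{-d}$ via the $(\cS^{(-d)})^{k-\Delta\ell}$-tail of $\cG^+[k,\Delta\ell]$, and transfer the estimate. Where the paper simply cites Lemma~7.7 of \cite{Si17c} for the closeness and then says the conclusion ``follows from the fact that $x^{\cG^+[k,\Delta\ell]}_0$ is a fixed point'', you correctly unpack both steps, including the key $\lambda^{k}=\cO(1)$ bound coming from the $\cO(1/k)$-confinement (\ref{eq:nearbyCurve}) of the curve $\det(P_{\cG^+[k,\Delta\ell]})=0$. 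One omission: your argument tacitly assumes $\tilde{x}$ exists, which fails on the measure-zero set where $\det(I-M_{\cG^+[k,\Delta\ell]})=0$; the paper patches this by noting that both sides of (\ref{eq:tildegLApproxFixedPoint}) depend smoothly on the parameters, so the estimate extends by density from the generic case.
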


\begin{proof}
Suppose $\det \!\left( P_{\cG^+[k,\Delta \ell]} \right) = 0$.
Also suppose, for the moment, that the $\cG^+[k,\Delta \ell]$-cycle exists and is unique
(as is generically the case, although it may not be admissible).
Then $x^{\cG^+[k,\Delta \ell]}_0$ lies on the switching manifold.
Since $\rho_{\rm max} < 1$ and $\cG^+[k,\Delta \ell]$ ends in a power of $\cS^{(-d)}$ proportional to $k$,
$x^{\cG^+[k,\Delta \ell]}_0$ is an $\cO \!\left( \rho_{\rm max}^k \right)$ distance from $W^c$
(this is demonstrated formally in the proof of Lemma 7.7 of \cite{Si17c}).
Thus $x^{\cG^+[k,\Delta \ell]}_0 = x^{\rm int}_{-d} + \cO \!\left( \rho_{\rm max}^k \right)$,
and then (\ref{eq:tildegLApproxFixedPoint}) follows from the fact that
$x^{\cG^+[k,\Delta \ell]}_0$ is a fixed point of $f^{\cG^+[k,\Delta \ell]}$.

Equation (\ref{eq:tildegLApproxFixedPoint}) is also true if the $\cG^+[k,\Delta \ell]$-cycle
does not exist or is non-unique\linebreak							
(i.e.~$\det \!\left( I - M_{\cG^+[k,\Delta \ell]} \right) = 0$)
because the components of (\ref{eq:tildegLApproxFixedPoint}) are smooth functions of the parameters
and $\det \!\left( I - M_{\cG^+[k,\Delta \ell]} \right) \ne 0$
on a dense subset of parameter space. 
\end{proof}


\section{An attracting invariant set}
\label{sec:Lambda}
\setcounter{equation}{0}

In this section we enlarge the one-dimensional fundamental domain $\Omega_{\Delta \ell} \subset W^c$
into an $N$-dimensional set $\Phi$.
The set $\Phi$ will have the property that iterations of any $x \in \Phi$ under $f$
will return to $\Phi$ after following $\cG^+[k,\Delta \ell]$, or a similar sequence,
with the assumption $\rho_{\rm max} < 1$.
For this reason $\Phi$ is referred to as a {\em recurrent set}.

We first define $\Phi$ in \S\ref{sub:Phi}.
We then study the return dynamics on $\Phi$ in \S\ref{sub:tildeg} and \S\ref{sub:continuity}.
Lastly we use $\Phi$ to identify an attracting invariant set in \S\ref{sub:homotopy}.

\subsection{The construction of a recurrent set}
\label{sub:Phi}

By the definition of $\Omega_{\Delta \ell}$, see (\ref{eq:Omega}), the surface
\begin{equation}
\left\{ x \in \mathbb{R}^N ~\middle|~
e_1^{\sf T} f^{\cX^{\overline{0}} \!\left( \cY^{\overline{0}} \cX \right)^{\Delta \ell}}(x) = 0 \right\}
\label{eq:H}
\end{equation}
intersects $\Omega_{\Delta \ell}$ at its right endpoint $v \!\left( h^R_{\Delta \ell} \right)$.

\begin{lemma}
Suppose $\det(J) \ne 0$.
Let $\Delta \ell \ge 0$ and suppose $\kappa^+_{\Delta \ell} \ne \kappa^+_{\Delta \ell-1}$ if $\Delta \ell \ge 1$.
Then (\ref{eq:H}) is a hyperplane not parallel to $\Omega_{\Delta \ell}$.
\label{le:H}
\end{lemma}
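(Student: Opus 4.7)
The plan is to reduce both claims to a single non-vanishing condition that has already been computed in Lemma \ref{le:denominator}. Since $f^L$ and $f^R$ are affine, so is any composition, and in particular
\[
f^{\cX^{\overline{0}} \left( \cY^{\overline{0}} \cX \right)^{\Delta \ell}}(x) = M_{\cX^{\overline{0}} \left( \cY^{\overline{0}} \cX \right)^{\Delta \ell}} x + b
\]
for some $b \in \mathbb{R}^N$ (a constant times $B$, via (\ref{eq:fS2})). Writing $n^{\sf T} \defeq e_1^{\sf T} M_{\cX^{\overline{0}} \left( \cY^{\overline{0}} \cX \right)^{\Delta \ell}}$ and $\beta \defeq e_1^{\sf T} b$, the set (\ref{eq:H}) is $\{ x \in \mathbb{R}^N \mid n^{\sf T} x + \beta = 0 \}$.

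Next I observe that this set is a hyperplane precisely when $n^{\sf T} \ne 0$, and that a hyperplane with normal $n$ is parallel to the line $\Omega_{\Delta \ell}$ (which has direction $\zeta_{-d}$ by (\ref{eq:centreManifold}) and (\ref{eq:Omega})) if and only if $n^{\sf T} \zeta_{-d} = 0$. Hence both claims of the lemma follow from the single inequality
\[
e_1^{\sf T} M_{\cX^{\overline{0}} \left( \cY^{\overline{0}} \cX \right)^{\Delta \ell}} \zeta_{-d} \ne 0,
\]
because this forces $n^{\sf T} \ne 0$ and simultaneously ensures non-parallelism.

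Finally I would invoke Lemma \ref{le:denominator}, which gives the leading-order value of this quantity as $\eta, \nu \to 0$. For $\Delta \ell = 0$ the leading term is $\tfrac{t_{(\ell-1)d}}{t_{-d}}$, which is nonzero: by the definition of $\cS$-shrinking point in (\ref{eq:shrPointGeneric}), $t_i = 0$ only for $i = 0$ and $i = \ell d$, so neither $t_{-d}$ nor $t_{(\ell-1)d}$ vanishes. For $\Delta \ell \ge 1$ the leading term is $\tfrac{c\, t_{(\ell-1)d}}{a\, t_{-d}}\bigl(\kappa^+_{\Delta \ell} - \kappa^+_{\Delta \ell-1}\bigr)$, which is nonzero by the hypothesis $\kappa^+_{\Delta \ell} \ne \kappa^+_{\Delta \ell-1}$ together with $a, c \ne 0$ (the assumption $\det(J) \ne 0$ together with earlier genericity ensures $a \ne 0$, and $c \ne 0$ since it is a product of $1 - \rho_j$, none of which vanish in the relevant regime). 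By continuity, the leading-order nonvanishing persists in a neighbourhood of the $\cS$-shrinking point, completing the proof.

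There is no serious obstacle here: the whole content of the lemma is packaged inside Lemma \ref{le:denominator}, and the argument is essentially a one-line translation between ``hyperplane with normal $n$'' and ``line with direction $\zeta_{-d}$'' geometric conditions.
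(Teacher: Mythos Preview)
Your proposal is correct and follows essentially the same route as the paper: write the composed map affinely via (\ref{eq:fS2}), identify the normal vector as $e_1^{\sf T} M_{\cX^{\overline{0}} \left( \cY^{\overline{0}} \cX \right)^{\Delta \ell}}$, and invoke Lemma~\ref{le:denominator} to conclude $n^{\sf T}\zeta_{-d}\ne 0$, which simultaneously gives both claims. You spell out in more detail why the leading terms in (\ref{eq:denominator}) are nonzero (the paper simply asserts the nonvanishing); one minor correction is that $a\ne 0$ comes directly from the shrinking-point genericity conditions (\ref{eq:shrPointGeneric}), not from $\det(J)\ne 0$, and $c\ne 0$ follows because the unit eigenvalue of $M_{\cS}$ has multiplicity one at a shrinking point.
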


\begin{proof}
By (\ref{eq:fS2}), $e_1^{\sf T} f^{\cX^{\overline{0}} \left( \cY^{\overline{0}} \cX \right)^{\Delta \ell}}(x) =
e_1^{\sf T} M_{\cX^{\overline{0}} \left( \cY^{\overline{0}} \cX \right)^{\Delta \ell}} \,x +
e_1^{\sf T} P_{\cX^{\overline{0}} \left( \cY^{\overline{0}} \cX \right)^{\Delta \ell}} \,B$.
This is the equation for a hyperplane with normal vector 
$e_1^{\sf T} M_{\cX^{\overline{0}} \left( \cY^{\overline{0}} \cX \right)^{\Delta \ell}}$.
By Lemma \ref{le:denominator},
$e_1^{\sf T} M_{\cX^{\overline{0}} \left( \cY^{\overline{0}} \cX \right)^{\Delta \ell}} \zeta_{-d} \ne 0$.
Thus (\ref{eq:H}) is a hyperplane (because the normal vector is nonzero)
and is not parallel to $\Omega_{\Delta \ell}$ (because $\Omega_{\Delta \ell}$
has direction $\zeta_{-d}$)\removableFootnote{
Originally I had a much longer proof of Lemma \ref{le:H} because I was doing things in terms of the decomposition
$x = x^{\rm int}_{-d} + h \zeta_{-d} + q$ which makes things unnecessarily more difficult,
although it did lead to the formula (\ref{eq:Hnormal3}).

By using the decomposition $x = x^{\rm int}_{-d} + h \zeta_{-d} + q$ (\ref{eq:xEigCoords}), we obtain
\begin{equation}
e_1^{\sf T} f^{\cX^{\overline{0}} \!\left( \cY^{\overline{0}} \cX \right)^{\Delta \ell}}(x) =
e_1^{\sf T} f^{\cX^{\overline{0}} \!\left( \cY^{\overline{0}} \cX \right)^{\Delta \ell}}
\!\left( x^{\rm int}_{-d} + h^R_{\Delta \ell} \zeta_{-d} \right) +
e_1^{\sf T} M_{\cX^{\overline{0}} \left( \cY^{\overline{0}} \cX \right)^{\Delta \ell}}
\left( \left( h-h^R_{\Delta \ell} \right) \zeta_{-d} + q \right) \;.
\label{eq:Hnormal1}
\end{equation}
Since $e_1^{\sf T} f^{\cX^{\overline{0}} \left( \cY^{\overline{0}} \cX \right)^{\Delta \ell}}
\!\left( x^{\rm int}_{-d} + h^R_{\Delta \ell} \zeta_{-d} \right) = 0$,
if $x \in H_{\Delta \ell}$, then by (\ref{eq:denominator})
we can solve for $h$ given $q$:
\begin{equation}
h = h^R_{\Delta \ell} - \frac{e_1^{\sf T} M_{\cX^{\overline{0}} \left( \cY^{\overline{0}} \cX \right)^{\Delta \ell}} q}
{e_1^{\sf T} M_{\cX^{\overline{0}} \left( \cY^{\overline{0}} \cX \right)^{\Delta \ell}} \zeta_{-d}} \;.
\label{eq:Hnormal2}
\end{equation}
We can therefore write
\begin{equation}
H_{\Delta \ell} = \left\{
x = x^{\rm int}_{-d} + h^R_{\Delta \ell} \zeta_{-d} +
\left( I - \frac{\zeta_{-d} e_1^{\sf T} M_{\cX^{\overline{0}} \left( \cY^{\overline{0}} \cX \right)^{\Delta \ell}}}
{e_1^{\sf T} M_{\cX^{\overline{0}} \left( \cY^{\overline{0}} \cX \right)^{\Delta \ell}} \zeta_{-d}} \right)
q ~\middle|~
\omega_{-d}^{\sf T} q = 0 \right\} \;.
\label{eq:Hnormal3}
\end{equation}
The matrix coefficient of $q$ in (\ref{eq:Hnormal3}) has rank $N-1$,
and $e_1^{\sf T} M_{\cX^{\overline{0}} \left( \cY^{\overline{0}} \cX \right)^{\Delta \ell}}$
is a left eigenvector for this matrix corresponding to the zero eigenvalue.
}.
\end{proof}

Given $k \in \mathbb{Z}^+$, $|\Delta \ell| < k$ and $Q > 0$, let
\begin{align}
H_{k,\Delta \ell,Q} \defeq
\bigg\{ x \in {\rm range} \!\left( f^{\cS^{(-d)}} \right) ~\bigg|~
e_1^{\sf T} f^{\cX^{\overline{0}} \left( \cY^{\overline{0}} \cX \right)^{\Delta \ell}}(x) = 0, \nonumber \\
\left\| \left( I - \zeta_{-d} \omega_{-d}^{\sf T} \right)
\left( x - x^{\rm int}_{-d} \right) \right\| \le Q \rho_{\rm max}^{k} \bigg\},
\label{eq:Hlocal}
\end{align}
be a subset of (\ref{eq:H}) that contains $v \!\left( h^R_{\Delta \ell} \right)$.
If we write any $x \in H_{k,\Delta \ell,Q}$
in the form $x = x^{\rm int}_{-d} + h \zeta_{-d} + q$ (\ref{eq:xEigCoords}),
then $\| q \| \le Q \rho_{\rm max}^{k}$\removableFootnote{
I think that $f^{\left( \cX \cY^{\overline{0}} \right)^{\Delta \ell} \hat{\cX}} \!\left( \Omega_{\Delta \ell} \right)$
actually lies within $\cO \!\left( \frac{1}{k} \right)$ of $W^c$
(rather than an $\cO(1)$ distance away)
thus $f^{\cG^+[k,\Delta \ell]} \!\left( \Omega_{\Delta \ell} \right)$ is actually within
$\cO \!\left( \frac{\rho_{\rm max}^{k}}{k} \right)$ of $W^c$.
The same is true for $\Phi$,
which explains why my two example values of $Q$ are rather small.
We don't need to discuss such complications (which seems difficult)
as we have shown that such a $Q$ exists, which is sufficient.
}.

If $0$ is an eigenvalue of $M_{\cS^{(-d)}}$,
then the range of $f^{\cS^{(-d)}}$ has dimension less than $N$
and the condition $H_{k,\Delta \ell,Q} \subset {\rm range} \!\left( f^{\cS^{(-d)}} \right)$ is helpful below.
It is tempting to ignore this complication, as it is a special case
within the space of maps of the form (\ref{eq:f}),
however it is in fact typical for $0$ to be an eigenvalue of $M_{\cS^{(-d)}}$ in the application to
grazing-sliding bifurcations \cite{DiKo02}\removableFootnote{
We want $\Phi$ in the range of $f^{\cS^{(-d)}}$
so that the left and right faces of $\Phi$ have the same dimension,
and so that $f^{\cS^{(-d)}}$ is a bijection between the faces
and so that $\Phi$ may be interpreted as a cylinder.
}.

Let
\begin{equation}
\Phi_{k,\Delta \ell,Q} \defeq \left\{ \alpha x_1 + (1-\alpha) x_2 ~\middle|~
x_1 \in H_{k,\Delta \ell,Q} ,\,
x_2 \in f^{\cS^{(-d)}} \!\left( H_{k,\Delta \ell,Q} \right) ,\,
0 \le \alpha < 1 \right\}.
\label{eq:Phi}
\end{equation}
For brevity we just write $\Phi$ when it is clear what values of
$k$, $\Delta \ell$ and $Q$ are being used.
The set $\Phi$ is the convex hull of $H_{k,\Delta \ell,Q}$ and its image under $f^{\cS^{(-d)}}$,
but not including $H_{k,\Delta \ell,Q}$\removableFootnote{
By including exactly one face in the definition of $\Phi$,
we have Lemma \ref{le:beta}.
By including the left face and not the right one,
this eventually corresponds to $0 \le z < 1$ in the skew sawtooth map.
}.
Fig.~\ref{fig:iteratePhi} shows the set $\Phi$ for two examples.


\begin{figure}[b!]
\begin{center}
\setlength{\unitlength}{1cm}
\begin{picture}(14.5,5.25)
\put(0,0){\includegraphics[height=5.25cm]{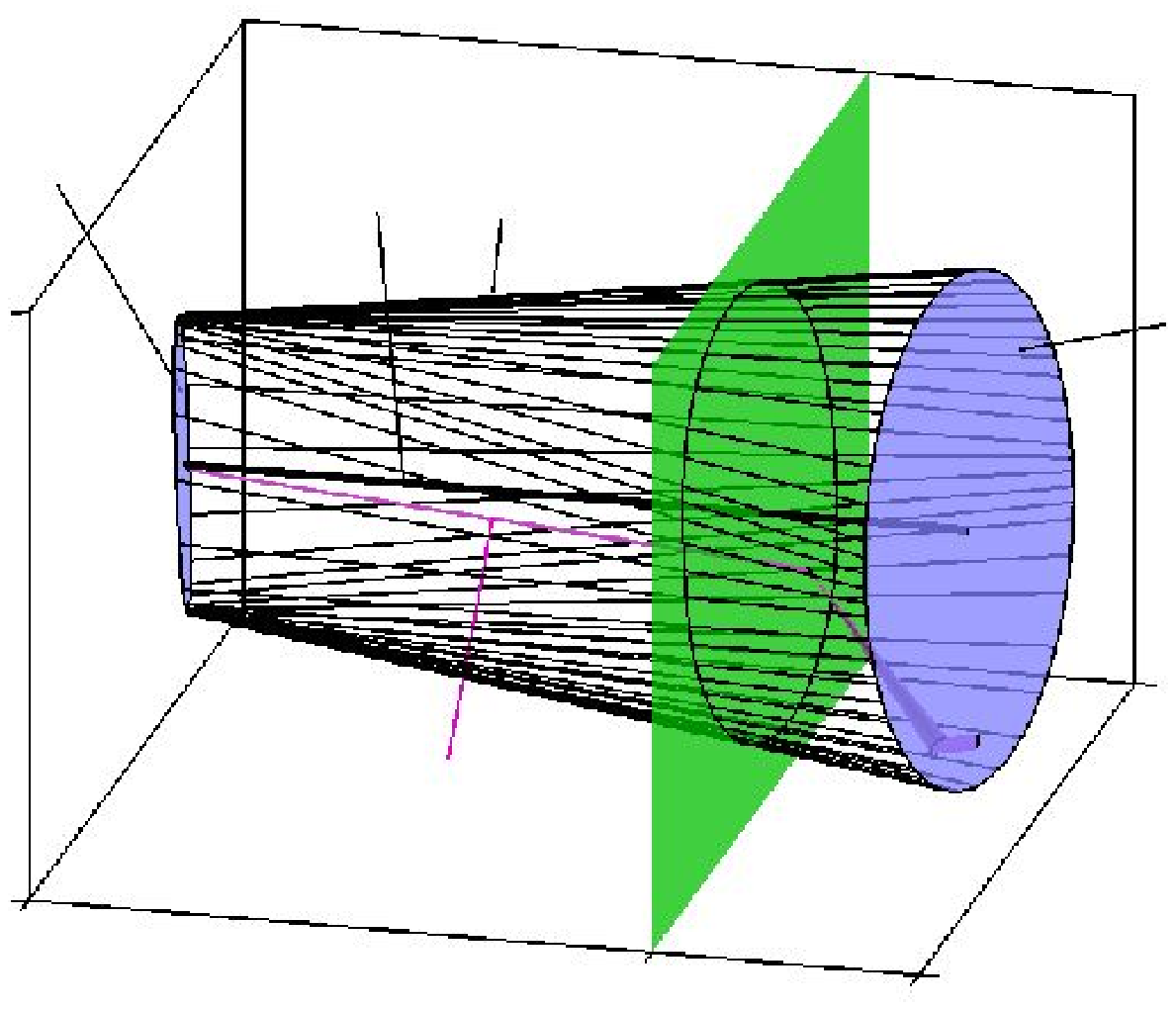}}
\put(7.5,0){\includegraphics[height=5.25cm]{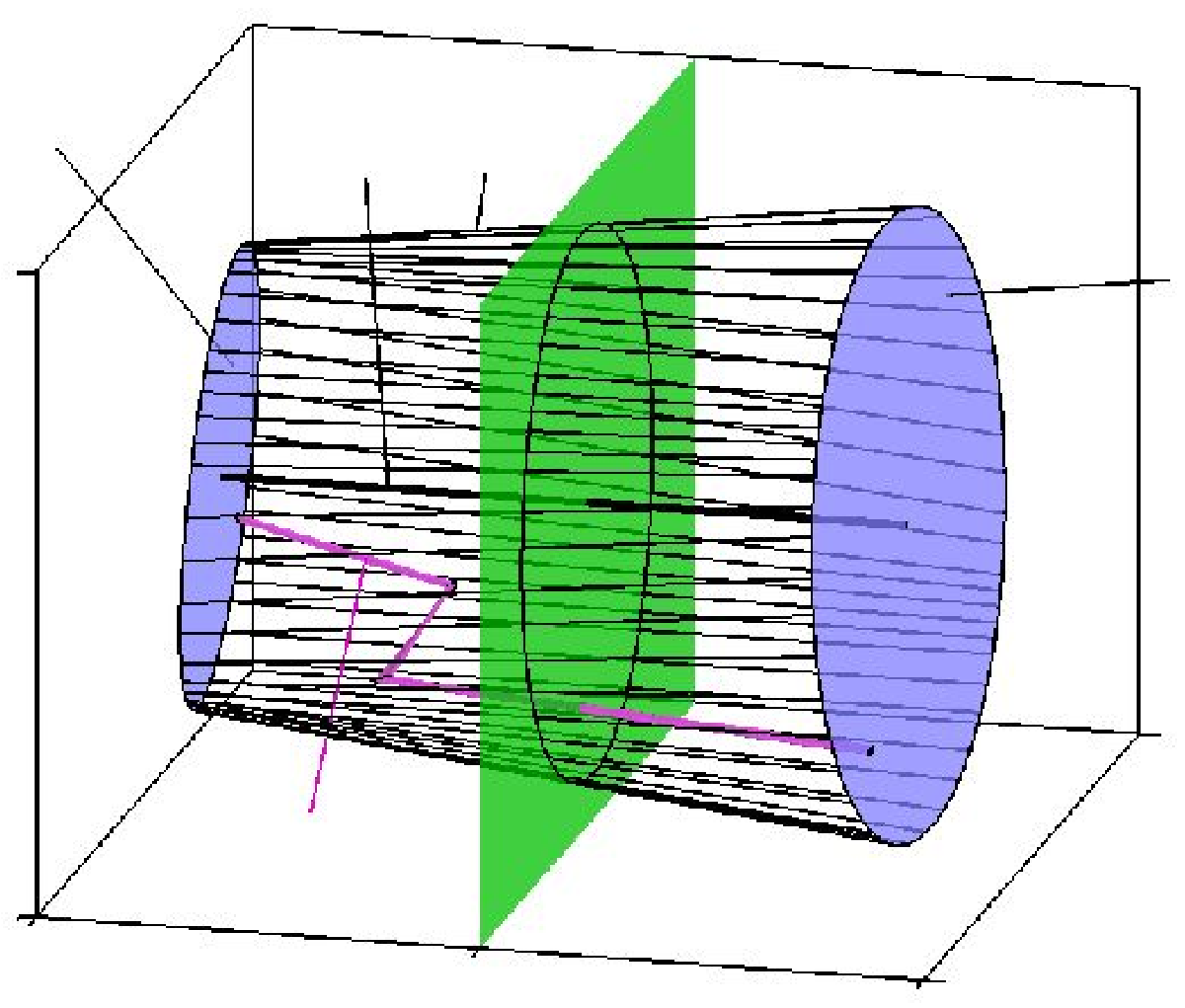}}
\put(1.2,5.1){\large \sf \bfseries A}
\put(8.7,5.1){\large \sf \bfseries B}
\put(2.6,.25){\small $s$}
\put(6,1.15){\small $\check{y}$}
\put(.5,2.5){\small $\check{z}$}
\put(.46,.56){\scriptsize $-0.7$}
\put(3.73,.29){\scriptsize $0$}
\put(4.95,.2){\scriptsize $0.3$}
\put(5.31,.4){\scriptsize $-0.006$}
\put(6.38,1.78){\scriptsize $0.006$}
\put(-.14,.79){\scriptsize $-0.006$}
\put(.06,3.58){\scriptsize $0.006$}
\put(6.38,3.54){\scriptsize $H$}
\put(-.02,4.43){\scriptsize $f^{\cS^{(-d)}}(H)$}
\put(2.41,4.25){\scriptsize $\Omega_{\Delta \ell}$}
\put(3.07,4.22){\scriptsize $\Phi$}
\put(2.62,1.27){\scriptsize $F(\Phi)$}
\put(10.17,.12){\small $s$}
\put(13.52,1.01){\small $\check{y}$}
\put(8,2.5){\small $\check{z}$}
\put(7.9,.49){\scriptsize $-0.2$}
\put(10.38,.34){\scriptsize $0$}
\put(12.45,.19){\scriptsize $0.2$}
\put(12.83,.39){\scriptsize $-0.005$}
\put(13.86,1.57){\scriptsize $0.005$}
\put(7.33,.74){\scriptsize $-0.005$}
\put(7.55,3.79){\scriptsize $0.005$}
\put(13.89,3.75){\scriptsize $H$}
\put(7.49,4.59){\scriptsize $f^{\cS^{(-d)}}(H)$}
\put(9.84,4.44){\scriptsize $\Omega_{\Delta \ell}$}
\put(10.47,4.44){\scriptsize $\Phi$}
\put(9.44,1.03){\scriptsize $F(\Phi)$}
\end{picture}
\caption{
\label{fig:iteratePhi}
The recurrent set $\Phi = \Phi_{k,\Delta \ell,Q}$ (\ref{eq:Phi}) and its image under $F$ (\ref{eq:tildef2}).
For clarity we have used coordinates representing the displacement from the centre manifold $W^c$.
Panel A shows $\Phi_{2,0,0.07}$ for the parameter values of Fig.~\ref{fig:modeLockApprox50}
with $\tau_R = -1.45$ and $\delta_L = 0.1$.
Panel B shows $\Phi_{5,0,0.13}$ for the parameter values of Fig.~\ref{fig:modeLockApprox20}
with $\tau_R = -2.12$ and $\delta_L = 0.17$.
Note, the label $H_{k,\Delta \ell,Q}$ is abbreviated as $H$.
}
\end{center}
\end{figure}

\subsection{Return dynamics}
\label{sub:tildeg}


Here we show that, under certain assumptions, $\Phi$ is a recurrent set for $f$
for parameter values throughout $\Sigma^+_{k,\Delta \ell}$.
Let $x \in \mathbb{R}^N$ and suppose $\rho_{\rm max} < 1$.
Then $f^{\cG^+[k,\Delta \ell]}(x)$ is an $\cO \!\left( \rho_{\rm max}^k \right)$ distance from $W^c$
(assuming $k \gg \Delta \ell$), see \S\ref{sub:centre}.
Also
\begin{equation}
f^{\cG^+[k+\Delta k,\Delta \ell]}(x) = f^{\left( \cS^{(-d)} \right)^{\Delta k}}
\!\left( f^{\cG^+[k,\Delta \ell]}(x) \right),
\label{eq:cGplusFormula2}
\end{equation}
for all $\Delta k \ge 0$ by (\ref{eq:cGplusFormula}).
Moreover, (\ref{eq:cGplusFormula2}) holds for all $\Delta k \ge -(k-\Delta \ell)$ where we take
$\left( f^{\cS^{(-d)}} \right)^{-1}(x)$
to be the unique inverse belonging to ${\rm range} \!\left( f^{\cS^{(-d)}} \right)$\removableFootnote{
We define $f^{\left( \cS^{(-d)} \right)^{-1}}(x)$
as the unique inverse that belongs to ${\rm range} \!\left( f^{\cS^{(-d)}} \right)$.
This inverse, call it $y$,
can be constructed from $x$ by using the Jordan normal form of $M_{\cS^{(-d)}}$.

Write $M_{\cS^{(-d)}} = P J P^{-1}$ where
$J = \left[ \begin{array}{@{}c|c@{}}
\check{J} & 0 \\ \hline 0 & 0 \end{array} \right]$,
and $\check{J}$ is non-singular.
Then any $z \in \mathbb{R}^N$ belongs to the column space of $M_{\cS^{(-d)}}$ if and only if,
in block form matching that of $J$,
the vector $P^{-1} z$ is zero in the bottom component,
i.e.~$P^{-1} z = \left[ \begin{array}{c} s_1 \\ \hline 0 \end{array} \right]$,
for some vector $s_1$ (of dimension equal to the rank of $M_{\cS^{(-d)}}$).

Given any $x \in {\rm range} \!\left( f^{\cS^{(-d)}} \right)$,
write $P^{-1}(x-x^{\rm int}_{-d}) = \left[ \begin{array}{c} s_1 \\ \hline 0 \end{array} \right]$.
Then the desired inverse is
\begin{equation}
y = x^{\rm int}_{-d} - \frac{s^{\rm step}_{-d}}{\lambda} \zeta_{-d} +
P \left[ \begin{array}{c} \check{J}^{-1} s_1 \\ \hline 0 \end{array} \right] \;,
\end{equation}
because $y \in {\rm range} \!\left( f^{\cS^{(-d)}} \right)$ and
\begin{align}
f^{\cS^{(-d)}}(y) &= f^{\cS^{(-d)}} \!\left( x^{\rm int}_{-d} - \frac{s^{\rm step}_{-d}}{\lambda} \zeta_{-d} \right) +
M_{\cS^{(-d)}} P \left[ \begin{array}{c} \check{J}^{-1} s_1 \\ \hline 0 \end{array} \right] \nonumber \\
&= x^{\rm int}_{-d} + P J \left[ \begin{array}{c} \check{J}^{-1} s_1 \\ \hline 0 \end{array} \right] \nonumber \\
&= x^{\rm int}_{-d} + P \left[ \begin{array}{c} s_1 \\ \hline 0 \end{array} \right] \nonumber \\
&= x \;.
\end{align}
}.
Therefore 
$\left\{ f^{\cG^+[k+\Delta k,\Delta \ell]}(x) \right\}_{\Delta k \gg -k}$
is a sequence of points near $W^c$ mapping from one to the next under $f^{\cS^{(-d)}}$.


For any $y \in {\rm range} \!\left( f^{\cS^{(-d)}} \right)$,
let $\beta(y)$ denote the smallest value of $\Delta k$ for which 
\begin{equation}
e_1^{\sf T} f^{\cX^{\overline{0}} \left( \cY^{\overline{0}} \cX \right)^{\Delta \ell}}
\!\left( \left( f^{\cS^{(-d)}} \right)^{\Delta k}(y) \right) > 0 \;,
\label{eq:betaDefn}
\end{equation}
and assume $\beta(y) \gg -k$.
Also let
\begin{equation}
T(y) \defeq \left( f^{\cS^{(-d)}} \right)^{\beta(y)}(y).
\label{eq:T}
\end{equation}


\begin{lemma}
Suppose $\det(J) \ne 0$ and $\rho_{\rm max} < 1$.
Let $\Delta \ell \ge 0$ and suppose $\Sigma^+_{k,\Delta \ell}$ is well-defined
for arbitrarily large values of $k$ with $\left( \kappa^+_{\Delta \ell} - \kappa^+_{\Delta \ell - 1} \right) a > 0$.
If $k \in \mathbb{Z}^+$ and $Q \in \mathbb{R}$ are sufficiently large,
then at any point in $\Sigma^+_{k,\Delta \ell}$ and any $x \in \Phi_{k,\Delta \ell,Q}$,
\begin{enumerate}
\setlength{\itemsep}{0pt}
\item
$\beta \!\left( f^{\cG^+[k,\Delta \ell]}(x) \right)$ is well-defined
and $f^{\cG^+[k + \Delta k,\Delta \ell]}(x) \in \Phi$ if and only if
$\Delta k = \beta \!\left( f^{\cG^+[k,\Delta \ell]}(x) \right)$, and similarly
\item
$\beta \!\left( f^{\cG^+[k,\Delta \ell]^{\overline{0}}}(x) \right)$ is well-defined
and $f^{\cG^+[k + \Delta k,\Delta \ell]^{\overline{0}}}(x) \in \Phi$ if and only if
$\Delta k = \beta \!\left( f^{\cG^+[k,\Delta \ell]^{\overline{0}}}(x) \right)$.
\end{enumerate}
\label{le:beta}
\end{lemma}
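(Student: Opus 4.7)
Both parts of the lemma reduce to a single scalar iteration on $W^c$, once we observe that the relevant compositions all end in a large power of $\cS^{(-d)}$. For part (i), let $x \in \Phi$ and write $y \defeq f^{\cG^+[k,\Delta \ell]}(x)$ in the decomposition (\ref{eq:xEigCoords}) as $y = x^{\rm int}_{-d} + h_0 \zeta_{-d} + q_0$. By Lemma \ref{le:cGplusFormula} the trailing block of $\cG^+[k,\Delta \ell]$ is $\left( \cS^{(-d)} \right)^{k-\Delta \ell}$, so the contraction estimate (\ref{eq:fastDyns}) gives $\| q_0 \| = \cO \!\left( \rho_{\rm max}^{k-\Delta \ell} \right)$ uniformly over $x \in \Phi$. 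Equation (\ref{eq:centreDyns}) then shows that the iterates $y_{\Delta k} \defeq \left( f^{\cS^{(-d)}} \right)^{\Delta k}(y)$ satisfy $y_{\Delta k} = x^{\rm int}_{-d} + h_{\Delta k} \zeta_{-d} + q_{\Delta k}$, where $h_{\Delta k}$ evolves under the affine scalar map $h \mapsto s^{\rm step}_{-d} + \lambda h$ and $\| q_{\Delta k} \|$ contracts geometrically at rate $\rho_{\rm max}$ for $\Delta k \ge 0$. For the finitely many negative $\Delta k$ that will prove relevant, $\| q_{\Delta k} \|$ grows only by a bounded factor, because $y \in {\rm range} \!\left( f^{\cS^{(-d)}} \right)$ and we may use the restricted inverse of $f^{\cS^{(-d)}}$ on this range.

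Throughout $\Sigma^+_{k,\Delta \ell}$ we have $s^{\rm step}_{-d} = \cO \!\left( \frac{1}{k} \right)$ and $1 - \lambda = \cO \!\left( \frac{1}{k} \right)$. The identity $h^L_{\Delta \ell} = s^{\rm step}_{-d} + \lambda h^R_{\Delta \ell}$, which is immediate from (\ref{eq:hL}) together with the one-step version of (\ref{eq:centreDyns}), combined with Lemma \ref{le:hLhR} and the hypothesis $\left( \kappa^+_{\Delta \ell} - \kappa^+_{\Delta \ell-1} \right) a > 0$, gives $h^L_{\Delta \ell} < h^R_{\Delta \ell}$ with $h^R_{\Delta \ell} - h^L_{\Delta \ell}$ of the same $\cO \!\left( \frac{1}{k} \right)$ order as one scalar step. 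Hence the sequence $\left\{ h_{\Delta k} \right\}$ is strictly monotonic in the direction from $h^R_{\Delta \ell}$ toward $h^L_{\Delta \ell}$, so there is a unique $\Delta k^\ast \in \mathbb{Z}$ for which $h_{\Delta k^\ast} \in \left[ h^L_{\Delta \ell}, h^R_{\Delta \ell} \right)$. Choosing $Q$ larger than the implied constant controlling $\| q_{\Delta k^\ast} \|$ then forces $y_{\Delta k^\ast} \in \Phi$, while $y_{\Delta k} \notin \Phi$ for every other $\Delta k$ because its projection onto $W^c$ lies outside $\Omega_{\Delta \ell}$.

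It remains to identify $\Delta k^\ast$ with $\beta(y)$. Set $\phi(h) \defeq e_1^{\sf T} f^{\cX^{\overline{0}} \left( \cY^{\overline{0}} \cX \right)^{\Delta \ell}}(v(h))$; by Lemma \ref{le:H} and the derivation of (\ref{eq:hLhRProof2}), $\phi$ is affine in $h$ with unique zero at $h^R_{\Delta \ell}$ and slope $e_1^{\sf T} M_{\cX^{\overline{0}} \left( \cY^{\overline{0}} \cX \right)^{\Delta \ell}} \zeta_{-d}$ supplied by Lemma \ref{le:denominator}. The correction to $e_1^{\sf T} f^{\cX^{\overline{0}} \left( \cY^{\overline{0}} \cX \right)^{\Delta \ell}}(y_{\Delta k})$ arising from the orthogonal part $q_{\Delta k}$ is $\cO \!\left( \rho_{\rm max}^k \right)$ and hence dwarfed by the $\cO \!\left( \frac{1}{k} \right)$ step size of $\phi(h_{\Delta k})$ once $k$ is large. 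A sign check combining (\ref{eq:fourtIdentity}), the leading-order expression for $s^{\rm step}_{-d}$, and the formulas for $\kappa^+_{\Delta \ell}$ in Appendix \ref{app:formulas} then shows that $\phi(h) > 0$ precisely on the side of $h^R_{\Delta \ell}$ containing $h^L_{\Delta \ell}$, so the smallest $\Delta k$ with $\phi \!\left( h_{\Delta k} \right) > 0$ is exactly $\Delta k^\ast$.

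Part (ii) follows by repeating the same three steps for $y' \defeq f^{\cG^+[k,\Delta \ell]^{\overline{0}}}(x)$, because flipping the first symbol of $\cG^+[k,\Delta \ell]$ leaves the trailing block of $\cS^{(-d)}$'s intact: a direct calculation from Lemma \ref{le:cGplusFormula} yields $\cG^+[k,\Delta \ell]^{\overline{0}} = \cX^{\overline{0}} \cY^{\overline{0}} \left( \cX \cY^{\overline{0}} \right)^{\Delta \ell-1} \hat{\cX} \left( \cS^{(-d)} \right)^{k-\Delta \ell}$ for $\Delta \ell \ge 1$, and $\cG^+[k,0]^{\overline{0}} = \hat{\cX}^{\overline{0}} \left( \cS^{(-d)} \right)^k$. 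The principal obstacle in the whole argument is the sign-tracking in the third paragraph: coordinating the signs of $s^{\rm step}_{-d}$, the slope of $\phi$, and the quantity $\kappa^+_{\Delta \ell} - \kappa^+_{\Delta \ell-1}$ requires pushing the relevant identities from \cite{Si17c} carefully through the leading-order expansions; the remaining steps amount to a methodical unpacking of the centre-manifold decomposition.
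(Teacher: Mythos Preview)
Your overall strategy --- reduce to the scalar dynamics on $W^c$, locate the unique $\Delta k^\ast$ for which $h_{\Delta k^\ast}\in[h^L_{\Delta\ell},h^R_{\Delta\ell})$, then identify $\Delta k^\ast$ with $\beta(y)$ --- is close in spirit to the paper's, but there is a genuine gap in the ``if and only if'' step.

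The problem is your characterisation of membership in $\Phi$. You argue that $y_{\Delta k^\ast}\in\Phi$ once $Q$ is large enough, and that $y_{\Delta k}\notin\Phi$ for all other $\Delta k$ because the projection $h_{\Delta k}$ lies outside $[h^L_{\Delta\ell},h^R_{\Delta\ell})$. Neither direction is correct as stated. The left and right faces of $\Phi$ are pieces of the \emph{tilted} hyperplane (\ref{eq:H}) and its $f^{\cS^{(-d)}}$-image, not level sets of $h$. Consequently a point with $h$ just below $h^R_{\Delta\ell}$ can lie outside $\Phi$ (if its $q$-component pushes it past the right face), and a point with $h$ just above $h^R_{\Delta\ell}$ can lie inside $\Phi$. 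Enlarging $Q$ only relaxes the norm constraint on $q$; it does not move the left and right faces, so it cannot repair this. Since the step size $h_{\Delta k}-h_{\Delta k+1}$ matches the interval length $h^R_{\Delta\ell}-h^L_{\Delta\ell}$ to leading order, $h_{\Delta k^\ast}$ can land arbitrarily close to either endpoint, and the $\cO(\rho_{\rm max}^k)$ boundary layer is exactly where your projection argument breaks down.

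The paper avoids this by observing that membership in $\Phi$ is \emph{exactly} equivalent to the $\beta$-defining inequalities: the right face $H_{k,\Delta\ell,Q}$ sits inside the zero set of $x\mapsto e_1^{\sf T}f^{\cX^{\overline{0}}(\cY^{\overline{0}}\cX)^{\Delta\ell}}(x)$, so (working with the full point $y_{\Delta k}$, not its projection) the sign of this function at $y_{\Delta k}$ and $y_{\Delta k-1}$ determines precisely whether $y_{\Delta k}$ can be written as $\alpha x_1+(1-\alpha)x_2$ with $x_1\in H_{k,\Delta\ell,Q}$, $x_2\in f^{\cS^{(-d)}}(H_{k,\Delta\ell,Q})$ and $0\le\alpha<1$. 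This gives the equivalence with $\Delta k=\beta(y)$ without any approximation. Your argument can be repaired along these lines: rather than first projecting to $W^c$ and then trying to recover the exact statement, work directly with the function $x\mapsto e_1^{\sf T}f^{\cX^{\overline{0}}(\cY^{\overline{0}}\cX)^{\Delta\ell}}(x)$ on the full iterates $y_{\Delta k}$ and use the convex-hull description of $\Phi$.
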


A proof of Lemma \ref{le:beta} is given in Appendix \ref{app:proofs}.
In Lemma \ref{le:beta} the assumption on the sign of
$\left( \kappa^+_{\Delta \ell} - \kappa^+_{\Delta \ell - 1} \right) a$
is provided as an alternative to a stronger condition of admissibility at $\cG^+[k,\Delta \ell]$-shrinking points
and is used to show that the left hand-side of (\ref{eq:betaDefn})
is an increasing function of $\Delta k$\removableFootnote{
Here we show that
\begin{equation}
e_1^{\sf T} M_{\cX^{\overline{0}} \left( \cY^{\overline{0}} \cX \right)^{\Delta \ell}} \zeta_{-d} < 0 \;,
\label{eq:betaProof1alt}
\end{equation}
given that $\cG^+[k,\Delta \ell]$-shrinking points exist for arbitrarily large values of $k$
in the case $\Delta \ell \ge 1$.

Let $\tilde{t}_i$ denote the first component of the $i^{\rm th}$ point
of the $\cG^+[k,\Delta \ell]^{\overline{0}}$-cycle, for each $i$.
Also let $\nu_{\cG^+[k,\Delta \ell]}$ denote the
$\nu$-value of the $\cG^+[k,\Delta \ell]$-shrinking point.
We have
\begin{equation}
\tilde{t}_{\left( \ell^+_k + \Delta \ell - 1 \right) d_k^+} =
\nu_{\cG^+[k,\Delta \ell]} \left( \kappa^+_{\Delta \ell-1} - \kappa^+_{\Delta \ell} \right) +
\cO \!\left( \frac{1}{k^2} \right),
\label{eq:betaProof2}
\end{equation}
which is equation (A.39) of \cite{Si17c}.
By (\ref{eq:denominator}) and (\ref{eq:betaProof2}) we can write
\begin{equation}
e_1^{\sf T} M_{\cX^{\overline{0}} \left( \cY^{\overline{0}} \cX \right)^{\Delta \ell}} \zeta_{-d} =
\frac{-c t_{(\ell-1)d} \tilde{t}_{\left( \ell^+_k + \Delta \ell - 1 \right) d_k^+}}
{a \nu_{\cG^+[k,\Delta \ell]} t_{-d}} +
\cO(\eta,\nu) \;.
\label{eq:betaProof3}
\end{equation}
The assumption that $\cG^+[k,\Delta \ell]$-shrinking points exist for arbitrarily large values of $k$
implies that $\tilde{t}_{\left( \ell^+_k + \Delta \ell - 1 \right) d_k^+} < 0$.
Throughout $\Sigma^+_{k,\Delta \ell}$ we have $a \nu > 0$.
Also $c > 0$ because $\rho_{\rm max} < 1$.
Therefore (\ref{eq:betaProof3}) implies (\ref{eq:betaProof1alt}) in the case $\Delta \ell \ge 1$.
}.

\subsection{Continuity of the return map in a cylindrical topology}
\label{sub:continuity}

Let
\begin{equation}
F(x) \defeq \begin{cases}
T \!\left( f^{\cG^+[k,\Delta \ell]}(x) \right), & s \le 0 \;, \\
T \!\left( f^{\cG^+[k,\Delta \ell]^{\overline{0}}}(x) \right), & s \ge 0 \;.
\end{cases}
\label{eq:tildef2}
\end{equation}
Equivalently
\begin{equation}
F(x) = \begin{cases}
f^{\cG^+[k + \Delta k,\Delta \ell]}(x), & s \le 0 \;, \\
f^{\cG^+[k + \Delta k,\Delta \ell]^{\overline{0}}}(x), & s \ge 0 \;,
\end{cases}
\label{eq:tildef3}
\end{equation}
where
\begin{equation}
\Delta k = \begin{cases}
\beta \!\left( f^{\cG^+[k,\Delta \ell]}(x) \right), & s \le 0 \;, \\
\beta \!\left( f^{\cG^+[k,\Delta \ell]^{\overline{0}}}(x) \right), & s \ge 0 \;.
\end{cases}
\end{equation}
By Lemma \ref{le:beta}, $F : \Phi \to \Phi$ throughout $\Sigma^+_{k,\Delta \ell}$\removableFootnote{
If $F$ is to represent admissible dynamics of $f$, then
\begin{equation}
F(x) = f^i(x) \;,
\label{eq:tildef}
\end{equation}
for the smallest $i \in \mathbb{Z}^+$ such that $f^i(x) \in \Phi$.
For the results to correspond to admissible dynamics,
we really need that (\ref{eq:tildef}) holds
for any point $x \in \Phi$ throughout $\Sigma^+_{k,\Delta \ell}$.
This is a very strong assumption,
and I think it would be difficult to weaken it
(perhaps I could argue that the centre manifolds are admissible?).
}.

Here we {\em identify} the left and right faces of $\Phi$
to create a topology in which $\Phi$ is cylindrical.
More specifically, we identify each $x^+ \in H_{k,\Delta \ell,Q}$
with $x^- = f^{\cS^{(-d)}}(x^+)$.

\begin{lemma}
The map $F : \Phi \to \Phi$ is continuous
in the cylindrical topology of $\Phi$.
\label{le:continuous}
\end{lemma}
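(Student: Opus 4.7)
The plan is to verify continuity of $F$ on the quotient $\Phi/\!\sim$, where $\sim$ is the identification $x^+ \sim f^{\cS^{(-d)}}(x^+)$ for $x^+ \in H_{k,\Delta\ell,Q}$. There are three potential sources of discontinuity: (i) the switching manifold $\{s=0\}$ across which the two cases of (\ref{eq:tildef2}) meet; (ii) the level sets in $\Phi$ where the integer-valued function $\beta$ jumps inside the definition of $T$; and (iii) interior points, where $F$ is a composition of affine maps and hence trivially continuous.

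For (i), suppose $x_0 \in \Phi$ with $s=0$. The words $\cG^+[k,\Delta\ell]$ and $\cG^+[k,\Delta\ell]^{\overline{0}}$ differ only in the zeroth symbol, so continuity of $f$ at $\{s=0\}$ gives $f^{\cG^+[k,\Delta\ell]}(x_0) = f^{\cG^+[k,\Delta\ell]^{\overline{0}}}(x_0)$. Applying $T$ to the common image yields a single value, so the two pieces of $F$ glue continuously across $\{s=0\}$.

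For (ii), write $\varphi(y) \defeq e_1^{\sf T} f^{\cX^{\overline{0}}\left(\cY^{\overline{0}}\cX\right)^{\Delta\ell}}(y)$, so that $H_{k,\Delta\ell,Q} \subset \{\varphi = 0\}$. By (\ref{eq:betaDefn}), $\beta$ is locally constant at $y_0$ unless some iterate $(f^{\cS^{(-d)}})^{\Delta k_0}(y_0)$ lies in $H_{k,\Delta\ell,Q}$. Writing $p_0 \defeq (f^{\cS^{(-d)}})^{\Delta k_0}(y_0) \in H_{k,\Delta\ell,Q}$, the two one-sided limits of $T$ as $y\to y_0$ are
\begin{equation*}
T(y) \to p_0 \in H_{k,\Delta\ell,Q} \quad \text{when } \varphi\bigl((f^{\cS^{(-d)}})^{\Delta k_0}(y)\bigr) > 0,
\end{equation*}
and
\begin{equation*}
T(y) \to f^{\cS^{(-d)}}(p_0) \in f^{\cS^{(-d)}}(H_{k,\Delta\ell,Q}) \quad \text{when } \varphi\bigl((f^{\cS^{(-d)}})^{\Delta k_0}(y)\bigr) < 0.
\end{equation*}
These are precisely the two points identified under $\sim$, so $T$ extends continuously across such level sets in the cylindrical topology. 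Pre-composing with the affine maps $f^{\cG^+[k,\Delta\ell]}$ or $f^{\cG^+[k,\Delta\ell]^{\overline{0}}}$ preserves continuity of $F$, and Lemma~\ref{le:beta} ensures that $F(\Phi) \subseteq \Phi$, so the argument takes place entirely inside the quotient.

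The main obstacle is the careful bookkeeping at the exceptional level sets: one must show that $\beta$ jumps by exactly one (not more) across each such set, so that the single identification $x^+ \leftrightarrow f^{\cS^{(-d)}}(x^+)$ suffices. This amounts to ruling out two distinct iterates of $y_0$ lying in $H_{k,\Delta\ell,Q}$ simultaneously, which follows from the affine structure of $\varphi \circ (f^{\cS^{(-d)}})^{\Delta k}$ together with the sign-monotonicity of $\varphi$ along forward iterates of $f^{\cS^{(-d)}}$ near the hyperplane, as used in the proof of Lemma~\ref{le:beta}.
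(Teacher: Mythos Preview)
Your treatment of (i) and (ii) is correct and matches the paper. However, you have omitted the most substantive case: continuity across the seam of the cylinder itself, i.e.\ at points $x^- \in f^{\cS^{(-d)}}(H_{k,\Delta\ell,Q})$. In the quotient topology a neighbourhood of such an $x^-$ contains both points close to $x^-$ in the ordinary sense and points close to its identified partner $x^+ \in H_{k,\Delta\ell,Q}$. On the $x^-$ side one has $s \le 0$ and $F = T \circ f^{\cG^+[k,\Delta\ell]}$, whereas on the $x^+$ side one has $s \ge 0$ and $F = T \circ f^{\cG^+[k,\Delta\ell]^{\overline{0}}}$. Your item (iii) dismisses this as ``interior, hence trivially continuous,'' but that only handles approach from the $x^-$ side; showing the two sides agree is exactly what it means for $F$ to descend to the quotient, and it is not automatic.

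The paper closes this gap with the symbolic identity of Lemma~\ref{le:cGcSmdIdentity}, $\cS^{(-d)}\,\cG^+[k,\Delta\ell] = \cG^+[k,\Delta\ell]^{\overline{0}\,\overline{(\ell^+_k+\Delta\ell)d_k^+}}\,\cS^{(-d)}$. Applying the corresponding composition of affine maps to $x^+$ and using $x^- = f^{\cS^{(-d)}}(x^+)$ gives $f^{\cG^+[k,\Delta\ell]}(x^-) = f^{\cS^{(-d)}}\bigl(f^{\cG^+[k,\Delta\ell]^{\overline{0}\,\overline{(\ell^+_k+\Delta\ell)d_k^+}}}(x^+)\bigr)$. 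Since $x^+ \in H_{k,\Delta\ell,Q}$, the image $f^{\cX^{\overline{0}}(\cY^{\overline{0}}\cX)^{\Delta\ell}}(x^+)$ lies on the switching manifold, so by continuity of $f$ the doubly-flipped word may be replaced by $\cG^+[k,\Delta\ell]^{\overline{0}}$; then $T \circ f^{\cS^{(-d)}} = T$ yields $F(x^-) = F(x^+)$. Without this step the lemma is incomplete.
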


\begin{proof}
$F$ is continuous on the switching manifold
because if $s=0$ then $f^{\cG^+[k,\Delta \ell]}(x) = f^{\cG^+[k,\Delta \ell]^{\overline{0}}}(x)$ by the continuity of $f$.
Also $F$ is continuous at any $x \in \Phi$ with $F(x) \in f^{\cS^{(-d)}}(H_{k,\Delta \ell,Q})$,
because for any small $\Delta x$ the point $F(x + \Delta x)$
will lie near either $F(x)$ (if the value of $\beta(y)$ in $T$ is unchanged)
or its preimage in $H_{k,\Delta \ell,Q}$ under $f^{\cS^{(-d)}}$ (if the value of $\beta(y)$ in $T$ decreases by $1$)
and these two points are identical in the cylindrical topology of $\Phi$.

Choose any $x^- \in f^{\cS^{(-d)}}(H_{k,\Delta \ell,Q})$,
and let $x^+$ be the preimage of $x^-$ in $H_{k,\Delta \ell,Q}$ under $f^{\cS^{(-d)}}$.
It remains to show that the points
\begin{align}
F(x^-) &= T \!\left( f^{\cG^+[k,\Delta \ell]}(x^-) \right), \label{eq:Fminus} \\
F(x^+) &= T \!\left( f^{\cG^+[k,\Delta \ell]^{\overline{0}}}(x^+) \right), \label{eq:Fplus} 
\end{align}
are identical.

Since $x^- = f^{\cS^{(-d)}}(x^+)$, by (\ref{eq:cGcSmdIdentity}) we have
\begin{equation}
f^{\cG^+[k,\Delta \ell]}(x^-) = f^{\cS^{(-d)}} \!\left(
f^{\cG^+[k,\Delta \ell]^{\overline{0} \,\overline{\left( \ell^+_k + \Delta \ell \right) d^+_k}}}(x^+) \right).
\nonumber
\end{equation}
Since $x^+ \in H_{k,\Delta \ell,Q}$, by the continuity of $f$ this is the same as
\begin{equation}
f^{\cG^+[k,\Delta \ell]}(x^-) = f^{\cS^{(-d)}} \!\left( f^{\cG^+[k,\Delta \ell]^{\overline{0}}}(x^+) \right).
\nonumber
\end{equation}
Therefore (\ref{eq:Fminus}) can be rewritten as
\begin{equation}
F(x^-) = T \!\left( f^{\cS^{(-d)}} \!\left( f^{\cG^+[k,\Delta \ell]^{\overline{0}}}(x^+) \right) \right),
\nonumber
\end{equation}
which is identical to (\ref{eq:Fplus}) because $T \circ f^{\cS^{(-d)}} = T$ by the definition of $T$.
\end{proof}

\subsection{An attracting invariant set and its topology}
\label{sub:homotopy}

Fig.~\ref{fig:iteratePhi} shows the set $F(\Phi)$ for two examples.
If we continue to iterate $\Phi$ under $F$ we obtain the attracting invariant set
\begin{equation}
\Lambda_{\Delta \ell} \defeq \bigcap_{i=0}^{\infty} F^i \!\left( \Phi \right).
\label{eq:Lambda}
\end{equation}

\begin{lemma}
The sets $\Lambda_{\Delta \ell}$ and $\Omega_{\Delta \ell}$ are homotopic
in the cylindrical topology of $\Phi$.
\label{le:homotopy2}
\end{lemma}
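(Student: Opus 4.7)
My plan is to exploit the cylinder structure of $\Phi$: under the identification that glues $H_{k,\Delta\ell,Q}$ to $f^{\cS^{(-d)}}(H_{k,\Delta\ell,Q})$, the set $\Phi$ is naturally homeomorphic to $S^1 \times D$, where the $S^1$-factor is parametrised by the centre coordinate $h$ running over $[h^L_{\Delta\ell},h^R_{\Delta\ell})$ with endpoints identified, and $D$ is a compact convex cross-section of dimension $N-1$ in the directions orthogonal to $\omega_{-d}$. Under this identification, $\Omega_{\Delta\ell}$ is exactly the core circle $S^1 \times \{0\}$, and the linear homotopy $r_t(h,q) = (h,(1-t)q)$ is a deformation retraction of $\Phi$ onto $\Omega_{\Delta\ell}$. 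Thus the lemma reduces to showing that $\Lambda_{\Delta\ell}$ is itself a deformation retract of (or at least homotopy equivalent to) this core circle.

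The main ingredient is that $F: \Phi \to \Phi$ acts as a degree-one map on $\Phi$ in the cylindrical topology. I would establish this as follows. On the closed face $H_{k,\Delta\ell,Q}$ (where $s \le 0$ for points near the centre manifold in the relevant chart), $F$ is defined by applying $f^{\cG^+[k,\Delta\ell]}$ followed by enough iterates of $f^{\cS^{(-d)}}$ to return to $\Phi$. The identity (\ref{eq:cGcSmdIdentity}) used in the proof of Lemma~\ref{le:continuous} already encodes the fact that traversing $\Omega_{\Delta\ell}$ once under the cylindrical identification corresponds to exactly one full turn in the $h$-direction, and hence the map $F$ does not unwind the loop class $[\Omega_{\Delta\ell}] \in \pi_1(\Phi) \cong \mathbb{Z}$.

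Next I would combine this topological fact with the strong transverse contraction provided by (\ref{eq:fastDyns}). Each application of $F$ contains a large power of $f^{\cS^{(-d)}}$ (of order $kn$ iterates), so the diameter of $F^i(\Phi)$ measured by $\|(I-\zeta_{-d}\omega_{-d}^{\sf T})(x-x^{\rm int}_{-d})\|$ decays like $\rho_{\max}^{ik}$. In particular the sets $F^i(\Phi)$ form a nested decreasing sequence of compact sets, each of which wraps once around $\Phi$ (by the degree-one property and continuity, Lemma~\ref{le:continuous}) and whose transverse diameters shrink to zero. Standard intersection arguments then show that $\Lambda_{\Delta\ell}$ is a compact connected subset of $\Phi$ whose projection along the transverse coordinate onto $\Omega_{\Delta\ell}$ is surjective, so that the composition $\Lambda_{\Delta\ell} \hookrightarrow \Phi \xrightarrow{r_1} \Omega_{\Delta\ell}$ is a homotopy equivalence.

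The step I expect to be the main obstacle is verifying rigorously that $F$ has degree one on the cylindrical $\Phi$, or equivalently that each $F^i(\Phi)$ truly contains a loop freely homotopic to $\Omega_{\Delta\ell}$. The identification of the faces is essential here: one must check that applying the full return (including the wrap-around by $T$) neither collapses nor multiply covers the generator of $\pi_1(\Phi)$. Once this topological book-keeping is in place, the contraction estimate (\ref{eq:fastDyns}) and the Hausdorff convergence of $F^i(\Phi)$ to $\Lambda_{\Delta\ell}$ give the homotopy equivalence essentially for free, for instance via the retraction $r_1$ restricted to $\Lambda_{\Delta\ell}$.
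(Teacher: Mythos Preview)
Your approach is genuinely different from the paper's. The paper proves the lemma via an auxiliary result (Lemma~\ref{le:homotopy}): for any connected $\Psi\subset\Phi$ with $F(\Psi)\subset\Psi$, the inclusion $\iota_\Psi$ and $F|_\Psi$ are homotopic. The homotopy is written down explicitly as
\[
K(x,\alpha)=
\begin{cases}
T\!\left((1-\alpha)x+\alpha f^{\cG^+[k,\Delta\ell]}(x)\right), & s\le 0,\\
T\!\left((1-\alpha)x+\alpha f^{\cG^+[k,\Delta\ell]^{\overline{0}}}(x)\right), & s\ge 0,
\end{cases}
\]
i.e.\ straight-line interpolation in $\mathbb{R}^N$ (legitimate because each piece of $F$ is affine) followed by the wrap-around $T$. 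Continuity across the glued faces uses exactly the identity~(\ref{eq:cGcSmdIdentity}) and $T\circ f^{\cS^{(-d)}}=T$, as in Lemma~\ref{le:continuous}. Applying this with $\Psi=\Phi,F(\Phi),F^2(\Phi),\ldots$ gives $\Phi\simeq\Lambda_{\Delta\ell}$, and combining with the obvious retraction $\Phi\simeq\Omega_{\Delta\ell}$ finishes.

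By contrast, you aim to establish separately that $F$ has degree one on the cylinder and then combine this with the transverse contraction~(\ref{eq:fastDyns}) to control the nested intersection. The paper's route is shorter precisely because the explicit $K$ makes the degree-one fact automatic: $F\simeq\mathrm{id}_\Phi$ immediately, so no separate bookkeeping with $\pi_1$ is needed, and the argument never appeals to contraction rates at all.

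Your plan could in principle be made to work, but there is a real gap in your step~(iii). Surjectivity of the projection $\Lambda_{\Delta\ell}\to\Omega_{\Delta\ell}$ together with vanishing transverse diameter of the $F^i(\Phi)$ does \emph{not} on its own imply that $r_1|_{\Lambda_{\Delta\ell}}$ is a homotopy equivalence: a nested intersection of sets each homotopy equivalent to $S^1$ can fail to be homotopy equivalent to $S^1$ (think of inverse-limit pathologies). You would need either that $\Lambda_{\Delta\ell}$ is a graph over $\Omega_{\Delta\ell}$, or a shape-theoretic argument, or---simplest---the very homotopy $K$ the paper constructs, which shows each $F^i(\Phi)$ deformation-retracts onto $F^{i+1}(\Phi)$ within $\Phi$ and hence makes the limit behave.
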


Roughly speaking this means that $\Lambda_{\Delta \ell}$ can be transformed to
$\Omega_{\Delta \ell}$ by distorting it in a continuous fashion \cite{Ar11,Br93}\removableFootnote{
Two continuous functions $F_1, F_2 : X \to Y$ are said to be {\it homotopic},
and we write $F_1 \simeq F_2$, if there exists a continuous function
$K : X \times [0,1] \to Y$ such that for all $x \in X$,
$K(0,x) = F_1(x)$ and $K(1,x) = F_2(x)$.

Two topological spaces $X$ and $Y$ are said to be {\it homotopic},
and we write $X \simeq Y$, if there exist continuous functions $F : X \to Y$ and $G : Y \to X$
such that $F \circ G$ and $G \circ F$ are both homotopic to the corresponding identity function
(i.e. $F \circ G \simeq \id_G$ and $G \circ F \simeq \id_F$).
Note, the functions $F$ and $G$ are called {\it homotopy equivalences}.

Let $A, B \subset X$ be connected subsets.
I cannot find a reference that gives a definition of what it means for
$A$ to be ``homotopic'' to $B$.
Nevertheless Chris Tuffley devised the following definition which seems
to be what we want and what Jim Meiss had in mind:

$A$ and $B$ are said to be {\it homotopic} if there exist continuous functions
$F : A \to B$ and $G : B \to A$
such that (i) $F \circ G \simeq \id_B$, (ii) $G \circ F \simeq \id_A$
(thus are $F$ and $G$ are homotopy equivalences),
(iii) $\iota_A \simeq F$, and (iv) $\iota_B \simeq G$
(in (iii) technically $F$ should be treated as a function from $A$ to $X$,
and similarly for $G$ in (iv), but I think we can just ignore this because
I think it is obvious and because I do not know of a good notation to indicate this).

The map $\iota_A$ is the {\it inclusion} of $A$ to $X$
and is a function from $A$ to $X$ defined by $\iota_A(x) = x$ for all $x \in A$
(i.e.~$\iota_A$ is identity map on $A$ except the set that is maps to has been enlarged).
Thus the condition $\iota_A \simeq F$ ensures that $A$ can be continuously transformed into $B$
through the ambient space $X$ (and similarly $\iota_B \simeq G$ ensures the reverse can be achieved).

As shown by the following proposition,
my use of this definition is greatly simplified by the following two facts.
First, I only need worry about subsets $B$ given by $f(A)$
for a continuous function $f : X \to X$ (the map $G$ will be used for $f$).
We will see that I will be able to use $f$ for $F$.
Second, I can always assume $B \subset A$
(because I am studying images of $\tilde{\Phi}_{\Delta \ell,Q,k}$
which repeatedly map to within one another).
We will see that I will be able to use the identity map for $G$,
or more accurately the map $\iota_B : B \to A$.

\begin{proposition}
Let $f : X \to X$ be continuous.
Let $A \subset X$ be connected and suppose $f(A) \subset A$.
Suppose $f|_A \simeq \iota_A$.
Then $f(A) \simeq A$.
\end{proposition}

\begin{proof}
We simply show that our definition of homotopic subsets
is satisfied using $F = f|_A$ and $G = \iota_{f(A)}$
(really I want $G$ to be the inclusion from $f(A)$ to $A$).
Certainly these $F$ and $G$ are (or can be viewed as) functions
from $A$ to $f(A)$ and from $f(A)$ to $A$ respectively.

Let $K : A \times [0,1] \to X$ be a homotopy for $f|_A$ and $\iota_A$.

Condition (i): here $F \circ G = f|_{f(A)}$.
So $K|_{f(A)}$ is a homotopy for $f|_{f(A)}$ and $\id_{f(A)}$ as required.
Condition (ii): here $G \circ F = f|_A$ and is homotopic to $\id_A$ using $K$.
Condition (iii): satisfied immediately in view of $K$.
Condition (iv): here $G = \iota_{f(A)}$ so trivially it is homotopic to $\iota_{f(A)}$.
\end{proof}

A related concept that I don't think I need is that of deformation retracts.
Given a connected subset $A \subset X$,
a continuous function $K : X \times [0,1] \to X$ is called a {\it deformation retraction}
(of $X$ onto $A$) if
(i) $K(x,0) = x$ for all $x \in X$,
(ii) $K(x,1) \in A$ for all $x \in X$,
(iii) $K(a,1) = a$ for all $a \in A$.
Moreover, if such a $K$ exists then $A$ is called a {\it deformation retract} of $X$.
Note, the restriction of $K(\cdot,1)$ to $A$ is the identity map.
This does not seem helpful to me because I want to use $G$ for my map,
and this is not the identity map.

Another related concept that I don't think I need is that of the fundamental group.
A {\it loop} on $X$ is a continuous function $f : [0,1] \to X$ such that $f(0) = f(1)$.
The definition of homotopic functions defines equivalence classes for
the collection of all loops on $X$.
Loops can be ``multiplied'' and hence the equivalence classes may be thought of
as elements of a group.
This group is known as the {\it fundamental group}, denoted $\pi_1(X)$
(for path-connected spaces $X$).
Since $\tilde{\Phi}_{\Delta \ell,Q,k}$ is like a cylinder,
loops are characterised (up to homotopy) by how many times they wind around $\tilde{\Phi}_{\Delta \ell,Q,k}$.
It follows that the fundamental group of $\tilde{\Phi}_{\Delta \ell,Q,k}$ is isomorphic to $(\mathbb{Z},+)$.
Moreover, if I could show that the number of times a loop winds around $\tilde{\Phi}_{\Delta \ell,Q,k}$
is the same as for the image of the loop under $G$ (this seems obvious),
I could say that the {\it induced homomorphism}
$G^* : \pi_1 \!\left( \tilde{\Phi}_{\Delta \ell,Q,k} \right) \to
\pi_1 \!\left( \tilde{\Phi}_{\Delta \ell,Q,k} \right)$ is the identity map.
Consequently the {\it degree} of $G$, which is defined as $G^*(1)$, is equal to $1$.
}.
Below we prove Lemma \ref{le:homotopy2} by using the following result.

\begin{lemma}
Let $\Psi \subset \Phi$ be connected.
If $F(\Psi) \subset \Psi$, then $\Psi$ and $F(\Psi)$ are homotopic
in the cylindrical topology of $\Phi$.
\label{le:homotopy}
\end{lemma}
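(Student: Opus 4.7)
The plan is to invoke the principle implicit in the previous footnote's proposition (taking $f = F$, $X = \Phi$, $A = \Psi$): it suffices to show that $F|_\Psi \simeq \iota_\Psi$ as continuous maps $\Psi \to \Phi$, since taking $F|_\Psi$ and $\iota_{F(\Psi)}$ as the two homotopy equivalences then verifies every compatibility in the definition of homotopic subsets. In fact, I claim the stronger statement that $F : \Phi \to \Phi$ is homotopic to $\iota_\Phi$ on all of $\Phi$, from which the restricted homotopy follows immediately by restriction to $\Psi \times [0,1]$.

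To prove $F \simeq \iota_\Phi$, pass to the universal cover $\pi : \tilde{\Phi} \to \Phi$. Concretely, $\tilde{\Phi}$ is built by taking countably many copies $\Phi_n$ ($n \in \mathbb{Z}$) of the convex prism (\ref{eq:Phi}) without the face identification, and gluing the right face $f^{\cS^{(-d)}}(H_{k,\Delta \ell,Q})$ of $\Phi_n$ to the left face $H_{k,\Delta \ell,Q}$ of $\Phi_{n+1}$ via $f^{\cS^{(-d)}}$. The resulting $\tilde{\Phi}$ is homeomorphic to $H_{k,\Delta \ell,Q} \times \mathbb{R}$, hence contractible, inherits a natural stratification into convex prisms $\Phi_n$, and carries a $\mathbb{Z}$-action by deck transformations shifting $\Phi_m \mapsto \Phi_{m+1}$. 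Since $F$ is continuous on $\Phi$ in the cylindrical topology by Lemma \ref{le:continuous}, standard covering space theory produces a continuous equivariant lift $\tilde{F} : \tilde{\Phi} \to \tilde{\Phi}$.

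The equivariance of $\tilde{F}$ forces $F$ to induce the identity on $\pi_1(\Phi) = \mathbb{Z}$; that is, $F$ has cylindrical winding degree one, because the lift of any loop of winding number one has endpoint displacement $\sigma$ and $\tilde{F}(\sigma \tilde{x}) = \sigma \tilde{F}(\tilde{x})$ also has displacement $\sigma$. Define $\tilde{K} : \tilde{\Phi} \times [0,1] \to \tilde{\Phi}$ by taking, for each $\tilde{x}$, the piecewise-linear path from $\tilde{x}$ to $\tilde{F}(\tilde{x})$ consisting of straight segments within each traversed prism $\Phi_n$ and bending across the shared hyperplane faces at the transitions. Since each $\Phi_n$ is convex and $\tilde{\Phi}$ is homeomorphic to a Euclidean product, $\tilde{K}$ is well-defined and jointly continuous; moreover $\tilde{K}$ is $\mathbb{Z}$-equivariant because the identity and $\tilde{F}$ are equivariant and the straight-line interpolation commutes with the natural identification between the abstract copies $\Phi_n$. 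Hence $\tilde{K}$ descends via $\pi$ to a continuous homotopy $K : \Phi \times [0,1] \to \Phi$ with $K(\cdot, 0) = \iota_\Phi$ and $K(\cdot, 1) = F$, whose restriction to $\Psi \times [0,1]$ is the required homotopy.

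The main obstacle is justifying the joint continuity and equivariance of $\tilde{K}$ rigorously: the piecewise-linear path must depend continuously on its endpoints even though it may cross several prism faces (continuity of the crossing points follows from the transversality of the hyperplane faces to the relevant segments), and the $\mathbb{Z}$-action must be shown to preserve the construction (which is routine once each $\Phi_n$ is treated as an abstract isometric copy of $\Phi$). Once these standard topological verifications are in place, the descent through $\pi$ and the restriction to $\Psi$ are purely formal.
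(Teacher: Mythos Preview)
Your argument has a genuine gap: it is circular on the question of the degree of $F$. You assert that ``standard covering space theory produces a continuous equivariant lift $\tilde{F}$'' and then use the equivariance $\tilde{F}(\sigma\tilde{x}) = \sigma\tilde{F}(\tilde{x})$ to conclude that $F$ induces the identity on $\pi_1(\Phi)\cong\mathbb{Z}$. But covering space theory only guarantees a lift satisfying $\tilde{F}\circ\sigma = \sigma^{d}\circ\tilde{F}$, where $d$ is the integer $F_*(1)\in\mathbb{Z}$. Strong equivariance (your displayed relation) is \emph{equivalent} to $d=1$; it is not a free consequence of lifting. So you have assumed exactly what needs to be proved. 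Without $d=1$ the straight-line interpolation $\tilde K$ is not $\mathbb{Z}$-equivariant and does not descend to $\Phi$.

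Establishing $d=1$ is the real content of the lemma, and it relies on the specific structure of $F$ that your abstract argument never touches. The paper builds the homotopy explicitly,
\[
K(x,\alpha)=
\begin{cases}
T\!\left((1-\alpha)x+\alpha\,f^{\cG^+[k,\Delta\ell]}(x)\right), & s\le 0,\\
T\!\left((1-\alpha)x+\alpha\,f^{\cG^+[k,\Delta\ell]^{\overline{0}}}(x)\right), & s\ge 0,
\end{cases}
\]
and checks that $K(x^-,\alpha)=K(x^+,\alpha)$ across the identified faces. That check uses three concrete facts: the symbolic identity $\cS^{(-d)}\cG^+[k,\Delta\ell]=\cG^+[k,\Delta\ell]^{\overline{0}\,\overline{(\ell_k^++\Delta\ell)d_k^+}}\cS^{(-d)}$ from Lemma~\ref{le:cGcSmdIdentity} (together with $x^+\in H_{k,\Delta\ell,Q}$ so that the flipped symbol is immaterial), the affinity of $f^{\cS^{(-d)}}$ (so it commutes with convex combinations), and the defining relation $T\circ f^{\cS^{(-d)}}=T$. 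These ingredients are what pin down the degree; a purely topological argument on an abstract cylinder cannot supply them. If you want to salvage your route, you must first prove $d=1$ directly---for instance by tracking a single loop such as $\Omega_{\Delta\ell}$ through $F$ and showing its image still winds once---but doing so honestly will lead you back to essentially the same computation the paper performs.
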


To say that $\Psi$ and $F(\Psi)$ are {\em homotopic}
means that there exists a continuous function $K : \Phi \times [0,1] \to \Phi$
with $K(x,0) = x$ and $K(x,1) = F(x)$ for all $x \in \Psi$.
The function $K$ is said to be a {\em homotopy} between
$\iota_{\Psi}$ (the inclusion of $\Psi$) and $F|_{\Psi}$ (the restriction of $F$ to $\Psi$).

\begin{proof}[Proof of Lemma \ref{le:homotopy}]
Define $K : \Phi \times [0,1] \to \Phi$ by
\begin{equation}
K(x,\alpha) \defeq \begin{cases}
T \!\left( (1-\alpha) x + \alpha f^{\cG^+[k,\Delta \ell]}(x) \right), & s \le 0 \;, \\
T \!\left( (1-\alpha) x + \alpha f^{\cG^+[k,\Delta \ell]^{\overline{0}}}(x) \right), & s \ge 0 \;.
\end{cases}
\label{eq:K}
\end{equation}
By (\ref{eq:T}), $K(x,0) = x$ for all $x \in \Psi$.
Also, by (\ref{eq:tildef2}), $K(x,1) = F(x)$ for all $x \in \Psi$.


Choose any $x^- \in f^{\cS^{(-d)}}(H_{k,\Delta \ell,Q})$,
and let $x^+$ be the preimage of $x^-$ in $H_{k,\Delta \ell,Q}$ under $f^{\cS^{(-d)}}$.
We wish to show that 
\begin{align}
K(x^-,\alpha) &= T \!\left( (1-\alpha) x^- +
\alpha f^{\cG^+[k,\Delta \ell]}(x^-) \right), \label{eq:Kminus} \\
K(x^+,\alpha) &= T \!\left( (1-\alpha) x^+ +
\alpha f^{\cG^+[k,\Delta \ell]^{\overline{0}}}(x^+) \right), \label{eq:Kplus} 
\end{align}
are identical for all $\alpha \in [0,1]$.
Following the steps of the proof of Lemma \ref{le:continuous}, we obtain
\begin{equation}
K(x^-,\alpha) = T \!\left( (1-\alpha) f^{\cS^{(-d)}}(x^+) +
\alpha f^{\cS^{(-d)}} \!\left( f^{\cG^+[k,\Delta \ell]^{\overline{0}}}(x^+) \right) \right).
\nonumber
\end{equation}
Since $f^{\cS^{(-d)}}$ is affine, this is the same as
\begin{equation}
K(x^-,\alpha) = T \!\left( f^{\cS^{(-d)}} \!\left( (1-\alpha) x^+ +
\alpha f^{\cG^+[k,\Delta \ell]^{\overline{0}}}(x^+) \right) \right),
\nonumber
\end{equation}
which is identical to (\ref{eq:Kplus}) because $T \circ f^{\cS^{(-d)}} = T$.

This shows that $K$ is continuous at any $x^- \in f^{\cS^{(-d)}}(H_{k,\Delta \ell,Q})$.
As with $F$, the function $K$ is continuous on the switching manifold
and at any $x \in \Phi$ with $K(x,\alpha) \in f^{\cS^{(-d)}}(H_{k,\Delta \ell,Q})$.
Thus $K$ is continuous throughout $\Phi$ for any $\alpha \in [0,1]$.
Thus $K$ is indeed a homotopy and hence $\Psi$ and $F(\Psi)$ are homotopic.
\end{proof}


\begin{proof}[Proof of Lemma \ref{le:homotopy2}]
By applying Lemma \ref{le:homotopy} to $\Psi = \Phi$, $\Psi = F(\Phi)$,
$\Psi = F^2(\Phi)$, and so on, we deduce that $\Phi$ and $\Lambda_{\Delta \ell}$ are homotopic.
It is clear that $\Omega_{\Delta \ell}$ and $\Phi$ are homotopic in view of the definition of $\Phi$,
thus $\Lambda_{\Delta \ell}$ and $\Omega_{\Delta \ell}$ are homotopic.
\end{proof}

\section{The skew sawtooth map as an approximate return map}
\label{sec:return}
\setcounter{equation}{0}

\subsection{One-dimensional approximation}
\label{sub:slopes}

Recall, for any $x \in \mathbb{R}^N$ written as
$x = x^{\rm int}_{-d} + h \zeta_{-d} + q$ (\ref{eq:xEigCoords}),
the function $u(x)$ extracts the value of $h$,
and for any $h \in \mathbb{R}$ the function $v(h)$ returns the point $x^{\rm int}_{-d} + h \zeta_{-d} \in W^c$.
Thus for any $x \in W^c$ we have $v(u(x)) = x$
and for any $x$ near $W^c$ we have $v(u(x)) \approx x$.

The set $\Phi = \Phi_{k,\Delta \ell,Q}$ is an $\cO \!\left( \rho_{\rm max}^k \right)$
fattening of the fundamental domain $\Omega_{\Delta \ell} \subset W^c$.
Thus for any $x \in \Phi$ we have $v(u(x)) \approx x$
and $v(u(F(x))) \approx F(x)$ because $F(x) \in \Phi$.
In this way the $N$-dimensional map $F$ is well-approximated by the one-dimensional map taking $u(x)$ to $u(F(x))$.
This is the map
\begin{equation}
G \defeq u \circ F \circ v \;.
\label{eq:tildeg}
\end{equation}

The map $G$ is piecewise-linear but over the interval
$\left[ h^L_{\Delta \ell}, h^R_{\Delta \ell} \right)$ can involve many different linear pieces
corresponding to different values of $\Delta k$.
Next we show that, with fixed values of $\Delta \ell$ and $\theta$,
the leading order components of the slopes of these pieces only take two different values.

\begin{lemma}
Suppose $\det(J) \ne 0$ and $\rho_{\rm max} < 1$.
Let $\Delta \ell \ge 0$ and suppose $\Sigma^+_{k,\Delta \ell}$ is well-defined
for arbitrarily large values of $k$ with $\left( \kappa^+_{\Delta \ell} - \kappa^+_{\Delta \ell - 1} \right) a > 0$.
Then at any point in $\Sigma^+_{k,\Delta \ell}$ and any $h \in \mathbb{R}$,
\begin{align}
\frac{d}{d h} \,u \!\left( f^{\cG^+[k,\Delta \ell]} (v(h)) \right) &=
\frac{t_{-d} \kappa^+_{\Delta \ell} \tan(\theta)}{t_d} +
\cO \!\left( \frac{1}{k} \right), \label{eq:tildegLSlope} \\
\frac{d}{d h} \,u \!\left( f^{\cG^+[k,\Delta \ell]^{\overline{0}}} (v(h)) \right) &= \begin{cases}
\frac{t_{(\ell-1)d} \kappa^+_{-1} \tan(\theta)}{t_{(\ell+1)d}} + 
\cO \!\left( \frac{1}{k} \right), & \Delta \ell = 0 \;, \\
\frac{t_{-d} \kappa^+_{\Delta \ell-1} \tan(\theta)}{t_d} + 
\cO \!\left( \frac{1}{k} \right), & \Delta \ell \ge 1 \;,
\end{cases} \label{eq:tildegRSlope}
\end{align}
where $k \in \mathbb{Z}^+$.
\label{le:tildegSlopes}
\end{lemma}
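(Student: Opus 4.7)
The plan is to express each slope as the bilinear form $\omega_{-d}^{\sf T}M_{\cS'}\zeta_{-d}$, factor the matrix $M_{\cS'}$ using Lemma \ref{le:cGplusFormula}, pull out a large power of the simple eigenvalue $\lambda$ of $M_{\cS^{(-d)}}$ using the left eigenvector $\omega_{-d}^{\sf T}$, evaluate the remaining constant factor at the shrinking point using identities already used in \cite{Si17c}, and identify the $\tan\theta$ factor as the leading order of $\lambda^{k-\Delta \ell}$ via the defining identity (\ref{eq:Gamma}) for $\Gamma$.

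For (\ref{eq:tildegLSlope}), affineness of $f^{\cS'}$ immediately gives $\frac{d}{dh}u(f^{\cS'}(v(h))) = \omega_{-d}^{\sf T}M_{\cS'}\zeta_{-d}$, which is constant in $h$ (confirming piecewise linearity). By (\ref{eq:cGplusFormula}) and $\cX\cY^{\overline{0}} = \cS^{\overline{\ell d}}$,
\[ M_{\cG^+[k,\Delta \ell]} = M_{\cS^{(-d)}}^{k-\Delta \ell}\,M_{\hat{\cX}}\,M_{\cS^{\overline{\ell d}}}^{\Delta \ell}, \]
so the left eigenvector relation $\omega_{-d}^{\sf T}M_{\cS^{(-d)}} = \lambda\omega_{-d}^{\sf T}$ yields $\omega_{-d}^{\sf T}M_{\cG^+[k,\Delta \ell]}\zeta_{-d} = \lambda^{k-\Delta \ell}\omega_{-d}^{\sf T}M_{\hat{\cX}}M_{\cS^{\overline{\ell d}}}^{\Delta \ell}\zeta_{-d}$. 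The identity $\cS\hat{\cX} = \hat{\cX}\cS^{(-d)}$ used in the proof of Lemma \ref{le:cGplusFormula} implies $\omega_{-d}^{\sf T}M_{\hat{\cX}}M_\cS = \lambda\,\omega_{-d}^{\sf T}M_{\hat{\cX}}$, so $\omega_{-d}^{\sf T}M_{\hat{\cX}}$ is a left $\lambda$-eigenvector of $M_\cS$, and at the shrinking point ($\lambda = 1$) it is a scalar multiple of $u_0^{\sf T}$. A telescoping application of the identity $u_0^{\sf T}(I - M_{\cS^{\overline{\ell d}}}) = \frac{bt_d}{ct_{(\ell+1)d}}e_1^{\sf T}M_\cX$ from (A.34) of \cite{Si17c} (already used in the proof of Lemma \ref{le:denominator}) then expresses $u_0^{\sf T}M_{\cS^{\overline{\ell d}}}^{\Delta \ell}\zeta_{-d}$ in a form matching the definition of $\kappa^+_{\Delta \ell}$ in Appendix \ref{app:formulas}.

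For the $\tan\theta$ factor I expand $\lambda(\eta,\nu) = 1 + \lambda_\eta\eta + \lambda_\nu\nu + \cO((\eta,\nu)^2)$ and compute $\lambda_\eta,\lambda_\nu$ by implicit differentiation of the characteristic equation of $M_{\cS^{(-d)}}$ at the shrinking point, using identities from \cite{Si17c}. Substituting the polar expression (\ref{eq:polarCoords}) with $r = \Gamma(\theta)/k + \cO(1/k^2)$ inside $\Sigma^+_{k,\Delta \ell}$, the exponent $(k-\Delta \ell)\ln\lambda$ tends to a linear combination $\alpha\Gamma(\theta)\cos\theta + \beta\Gamma(\theta)\sin\theta$; the computed values of $\lambda_\eta,\lambda_\nu$ force $\alpha = -\beta$ with precisely the right normalization so that (\ref{eq:Gamma}) gives
\[ \alpha\Gamma(\theta)(\cos\theta - \sin\theta) = \alpha(\ln\cos\theta - \ln\sin\theta) = -\alpha\ln\tan\theta, \]
and hence $\lambda^{k-\Delta \ell}$ converges to a fixed constant times $\tan\theta$ with $\cO(1/k)$ error. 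Multiplying this by the constant factor from the previous paragraph yields $\frac{t_{-d}\kappa^+_{\Delta \ell}\tan\theta}{t_d} + \cO(1/k)$.

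For (\ref{eq:tildegRSlope}) with $\Delta \ell \ge 1$, Lemma \ref{le:cGplus0bar} and the cyclic shift by $d^+_k = n$ give $M_{\cG^+[k,\Delta \ell]^{\overline{0}}} = M_{\cS^{(-d)}}^{k-\Delta \ell}M_{\hat{\cX}}M_{\cS^{\overline{\ell d}}}^{\Delta \ell-1}M_{\cS^{(-d)}}$; two applications of the eigenvector relation reduce the slope to the same computation with $\Delta \ell$ replaced by $\Delta \ell-1$ (the extra factor of $\lambda$ is absorbed in the $\cO(1/k)$ error), producing $\frac{t_{-d}\kappa^+_{\Delta \ell-1}\tan\theta}{t_d} + \cO(1/k)$. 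The $\Delta \ell = 0$ case lies outside Lemma \ref{le:cGplusFormula}: instead $\cG^+[k,0]^{\overline{0}} = \hat{\cX}^{\overline{0}}(\cS^{(-d)})^k$, so the constant factor becomes $\omega_{-d}^{\sf T}M_{\hat{\cX}^{\overline{0}}}\zeta_{-d}$, which must be evaluated separately via a companion identity tied to $\kappa^+_{-1}$, yielding the distinct prefactor $\frac{t_{(\ell-1)d}\kappa^+_{-1}}{t_{(\ell+1)d}}$. The main obstacle will be the asymptotic step: one must pin down $\lambda_\eta$ and $\lambda_\nu$ precisely and verify that they combine with (\ref{eq:polarCoords}) and (\ref{eq:Gamma}) to produce exactly $\ln\tan\theta$, tracking signs and normalization constants carefully, and the $\Delta \ell = 0$ $\overline{0}$-case genuinely requires a separate (structurally different) computation to recover its distinct coefficient.
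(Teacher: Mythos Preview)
Your approach is essentially the paper's: write the slope as $\omega_{-d}^{\sf T}M_{\cG^+[k,\Delta\ell]}\zeta_{-d}$, factor out $\lambda^{k-\Delta\ell}$ via the eigenvector relation, evaluate the remaining scalar at the shrinking point in terms of $\kappa^+_{\Delta\ell}$, and replace $\lambda^{k-\Delta\ell}$ by $-\tan\theta$. Two technical points are worth tightening.

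First, you propose to derive $\lambda^{k-\Delta\ell}\to-\tan\theta$ from scratch by computing $\lambda_\eta,\lambda_\nu$ and invoking the functional form of $\Gamma$. This is unnecessary: the identity $\lambda^k = -\tan\theta + \cO(1/k)$ in $\Sigma^+_{k,\Delta\ell}$ is already recorded as equation~(A.24) of \cite{Si17c}, and the paper simply cites it. Your outline would reproduce that result, but there is no need to reinvent it here.

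Second, the constant factor needs a different identity than the one you cite. You reduce to $C\,u_0^{\sf T}M_{\cS^{\overline{\ell d}}}^{\Delta\ell}\zeta_{-d}$ with $\omega_{-d}^{\sf T}M_{\hat{\cX}} = C\,u_0^{\sf T}$ at the shrinking point; since $\zeta_{-d}=v_{-d}$ there, the bilinear form is $\kappa^+_{\Delta\ell}$ directly from (\ref{eq:kappaPlus}), so no telescoping via (A.34) is required. What you do need is $C$: apply both sides to $v_0$ and use Lemma~6.6 of \cite{Si17c}, namely $M_{\hat{\cX}}v_0 = -\tfrac{t_{-d}}{t_d}v_{-d}$, to obtain $C = -\tfrac{t_{-d}}{t_d}$. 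The paper packages both the passage from the $(\eta,\nu)$-dependent eigenvectors $\omega_{-d},\zeta_{-d}$ to $u_0,v_0$ and the accompanying $\cO(1/k)$ error into a single citation of Lemma~7.1 of \cite{Si17c}, after which Lemma~6.6 of \cite{Si17c} and (\ref{eq:kappaPlus}) finish the job exactly as you outline. Your handling of the $\overline{0}$ cases (shift by $d_k^+=n$ for $\Delta\ell\ge1$, and the separate $\hat{\cX}^{\overline{0}}$ computation for $\Delta\ell=0$) matches the paper's, which notes only that (\ref{eq:tildegRSlope}) ``can be derived in a similar fashion''.
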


Lemma \ref{le:tildegSlopes} is proved in Appendix \ref{app:proofs}.
Since $G$ is given by (\ref{eq:tildeg}), where $F$ is equal to
$f^{\cG^+[k + \Delta k,\Delta \ell]}$ or $f^{\cG^+[k + \Delta k,\Delta \ell]^{\overline{0}}}$
for some $\cO(1)$ value of $\Delta k$,
the slopes of $G$ are given by (\ref{eq:tildegLSlope}) and (\ref{eq:tildegRSlope}) to leading order.

\subsection{Circle map formulation}
\label{sub:parameters}





The function
\begin{equation}
\varphi(h) \defeq \begin{cases}
\frac{h - h^L_{\Delta \ell}}{h^R_{\Delta \ell} - h^L_{\Delta \ell}} \;, & a < 0 \;, \\
\frac{-h}{h^R_{\Delta \ell} - h^L_{\Delta \ell}} {\rm \,mod\,} 1 \;, & a > 0 \;.
\end{cases}
\label{eq:varphi}
\end{equation}
transforms the interval $\left[ h^L_{\Delta \ell}, h^R_{\Delta \ell} \right)$ into $[0,1)$ (the domain
of the skew sawtooth map (\ref{eq:g})).
We use this particular $a$-dependent transformation
because it leads to convenient $a$-independent formulas for $w$ and $\Delta k$\removableFootnote{
Our $a$-dependent definition of $F$ is particularly nice because then the formulas for $w$ and $k$ are independent of $a$.
This is helpful because to apply the results of this paper
one needs to be able to compute $w$ in order to evaluate $g$,
and one wants to compute $k$ in order to relate the dynamics of $g$ to those of $f$.
One ought to be less interested in the exact values of $x$ that $z$-values correspond to.

If we instead define $F(h) = \frac{h - h^L_{\Delta \ell}}{h^R_{\Delta \ell} - h^L_{\Delta \ell}}$ for $a > 0$, then
\begin{align}
w &= 1 - \frac{(1-a_L)(a_R-1)}{a_R-a_L} - k^2 \delta \;, \\
\Delta k &= -\left( G(z) - g(z) \right) \;.
\end{align}
This alternate correct set of formulas associated with $g$ is due to the fact that $g$
is unchanged by the transformation
\begin{align}
w &\mapsto 1 - \frac{(1-a_L)(a_R-1)}{a_R-a_L} - w \;, \\
z &\mapsto \left( z_{\rm sw} - z \right) {\rm \,mod\,} 1 \;.
\end{align}

Despite the piecewise nature of (\ref{eq:varphi}),
for either sign of $a$,
$0 \le z \le z_{\rm sw}$ corresponds to $h^L_{\Delta \ell} \le h \le 0$ and
$z_{\rm sw} \le z < 1$ corresponds to $0 \le h < h^R_{\Delta \ell}$.
The key difference created by (\ref{eq:varphi}) is that
for $a < 0$ the kink $z = z_{\rm sw}$ corresponds to $h = 0$,
whereas for $a > 0$ this $z$-value corresponds to $h = h^L_{\Delta \ell}$.
}.

\begin{theorem}
Suppose $\det(J) \ne 0$ and $\rho_{\rm max} < 1$.
Let $\Delta \ell \ge 0$ and suppose $\Sigma^+_{k,\Delta \ell}$ is well-defined
for arbitrarily large values of $k$ with $\left( \kappa^+_{\Delta \ell} - \kappa^+_{\Delta \ell - 1} \right) a > 0$.
Let $\theta_{\rm min} = \min \!\left( \theta^+_{\Delta \ell} ,\, \theta^+_{\Delta \ell-1} \right)$
and $\theta_{\rm max} = \max \!\left( \theta^+_{\Delta \ell} ,\, \theta^+_{\Delta \ell-1} \right)$.
Then at any point in $\Sigma^+_{k,\Delta \ell}$,
there exists $\Xi \subset \Phi$ with $\frac{{\rm meas}(\Xi)}{{\rm meas}(\Phi)} = \cO \!\left( \frac{1}{k} \right)$
such that throughout $\Phi \setminus \Xi$,
\begin{equation}
F = v \circ \varphi^{-1} \circ g \circ \varphi \circ u +
\cO \!\left( \frac{1}{k^2} \right),
\label{eq:F}
\end{equation}
where $g$ is given by (\ref{eq:g}) with
\begin{equation}
a_L = \begin{cases}
\frac{\tan(\theta)}{\tan \left( \theta_{\rm min} \right)} \;, &
\kappa^+_{\Delta \ell} > 0 \;, \kappa^+_{\Delta \ell - 1} > 0 \;, \\
-\frac{\tan(\theta)}{\tan \left( \theta_{\rm min} \right)} \;, &
\kappa^+_{\Delta \ell} \kappa^+_{\Delta \ell - 1} < 0 \;,
\end{cases} \qquad
a_R = \frac{\tan(\theta)}{\tan \!\left( \theta_{\rm max} \right)} \;, \qquad
w = k^2 \delta \;.
\label{eq:aLaRw2}
\end{equation}
Moreover, for any $x \in \Phi \setminus \Xi$,
\begin{equation}
F(x) = \begin{cases}
f^{\cG^+[k + \Delta k,\Delta \ell]}(x), & 0 \le z \le z_{\rm sw} \;, \\
f^{\cG^+[k + \Delta k,\Delta \ell]^{\overline{0}}}(x), & z_{\rm sw} \le z < 1 \;,
\end{cases}
\label{eq:F2}
\end{equation}
where $\Delta k = g_{\rm lift}(z) - g(z)$ and $z = \varphi(u(x))$.
\label{th:main}
\end{theorem}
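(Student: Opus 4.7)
The plan is to reduce the $N$-dimensional return map $F$ on $\Phi$ to the one-dimensional map $G \defeq u \circ F \circ v$ on $[h^L_{\Delta \ell}, h^R_{\Delta \ell})$, and then conjugate $G$ by $\varphi$ to obtain a map on $[0,1)$ that matches $g$. Because $\Phi$ is an $\cO\!\left(\rho_{\rm max}^k\right)$-fattening of $\Omega_{\Delta \ell} \subset W^c$ and $F(\Phi) \subset \Phi$ (Lemma \ref{le:beta}), applying $v$ to $G \circ u$ recovers $F$ up to a displacement in directions orthogonal to $\zeta_{-d}$, which by (\ref{eq:fastDyns}) is $\cO\!\left(\rho_{\rm max}^k\right)$. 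Since $\rho_{\rm max} < 1$ this is $o(1/k^2)$, so it is enough to match $\varphi \circ G \circ \varphi^{-1}$ with $g$ to order $\cO(1/k^2)$ on $\Phi \setminus \Xi$.

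Next I would identify the slopes. By (\ref{eq:tildef3}), on each branch of $F$ there is a fixed value of $\Delta k$ for which $F$ equals $f^{\cG^+[k+\Delta k,\Delta \ell]}$ or $f^{\cG^+[k+\Delta k,\Delta \ell]^{\overline{0}}}$. Writing $\cG^+[k+\Delta k,\Delta \ell] = \cG^+[k,\Delta \ell]\left(\cS^{(-d)}\right)^{\Delta k}$ via Lemma \ref{le:cGplusFormula}, and noting that $M_{\cS^{(-d)}}$ has $\lambda = 1 + \cO(1/k)$ on its centre direction, each extra $f^{\cS^{(-d)}}$-composition changes the slope of $G$ by only an $\cO(1/k)$ factor. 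The leading-order slopes are therefore those in Lemma \ref{le:tildegSlopes}. Since $\varphi$ is piecewise linear with constant derivative $\pm 1/(h^R_{\Delta \ell} - h^L_{\Delta \ell})$, conjugation by $\varphi$ preserves slopes. Using the Appendix formulas for $\theta^+_{\Delta \ell}$ (which give $\tan(\theta^+_{\Delta \ell}) \propto t_d/(t_{-d}\kappa^+_{\Delta \ell})$, and analogously in the $\Delta \ell = 0$ case) to rewrite the expressions in Lemma \ref{le:tildegSlopes} yields $\tan(\theta)/\tan(\theta^+_{\Delta \ell})$ and $\tan(\theta)/\tan(\theta^+_{\Delta \ell - 1})$. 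Sorting by $\theta_{\min}, \theta_{\max}$ and tracking signs according to whether $\kappa^+_{\Delta \ell}, \kappa^+_{\Delta \ell - 1}$ have the same or opposite signs recovers (\ref{eq:aLaRw2}).

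For the geometric data of $g$, the kink of $G$ is at the unique $h$ with $e_1^{\sf T} v(h) = 0$, i.e., $h = 0$ (using $e_1^{\sf T} x^{\rm int}_{-d} = 0$ and $e_1^{\sf T} \zeta_{-d} = 1$). Under $\varphi$ this becomes $-h^L_{\Delta \ell}/(h^R_{\Delta \ell} - h^L_{\Delta \ell})$, which by Lemma \ref{le:hLhR} and (\ref{eq:polarCoords}) simplifies, to leading order, to $(a_R - 1)/(a_R - a_L) = z_{\rm sw}$, as required. For the vertical offset $w$, I would use Lemma \ref{le:tildegJApproxFixedPoint}: the outer boundary of $\Sigma^+_{k,\Delta \ell}$ is a boundary of the $\cG^+_k$-mode-locking region, where $\det(P_{\cG^+[k,\Delta \ell]}) = 0$, hence $f^{\cG^+[k,\Delta \ell]}(x^{\rm int}_{-d}) = x^{\rm int}_{-d} + \cO(\rho_{\rm max}^k)$; applying $u$ and then $\varphi$ shows that $G$ fixes the kink there, so $w = 0$ at $\delta = 0$. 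The linear dependence $w = k^2 \delta$ then follows by differentiating $u \circ f^{\cG^+[k,\Delta \ell]} \circ v$ with respect to $\delta$ at $\delta = 0$ and combining with the normalisation (\ref{eq:delta}) and the gap estimate (\ref{eq:outerInnerDifference}).

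Finally, the excluded set $\Xi$ consists of those $x \in \Phi$ whose image $F(x)$ lands within an $\cO(1/k)$-neighbourhood of the switching manifold (equivalently, of $z = z_{\rm sw}$) or of an endpoint of $\Omega_{\Delta \ell}$; on such $x$, the integer $\beta$ in $T$ and the integer $g_{\rm lift}(z) - g(z)$ may differ by one, producing an $\cO(1)$ discrepancy. Since $a_R > 1$ and the branches of $G$ have bounded slope, the preimage of this neighbourhood has measure $\cO(1/k)$ in $\Phi$. Outside $\Xi$ the definition of $T$ directly gives $\Delta k = \beta(f^{\cG^+[k,\Delta \ell]}(x))$, and using that $f^{\cS^{(-d)}}$ shifts $u$-coordinate by approximately $h^R_{\Delta \ell} - h^L_{\Delta \ell}$ (one period of the lift), this integer matches exactly $g_{\rm lift}(z) - g(z)$, establishing (\ref{eq:F2}). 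The main obstacle will be accumulating the error estimates carefully: up to $\cO(k)$ applications of $f^{\cS^{(-d)}}$ are absorbed into $T$, but because $\lambda = 1 + \cO(1/k)$ on $\Sigma^+_{k,\Delta \ell}$, the bound $\lambda^{\cO(k)} = \cO(1)$ keeps the total error within $\cO(1/k^2)$; the off-centre decay (\ref{eq:fastDyns}) does the rest.
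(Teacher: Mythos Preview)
Your proposal is correct and follows essentially the same route as the paper: reduce $F$ to $G = u \circ F \circ v$, conjugate by $\varphi$, identify slopes via Lemma~\ref{le:tildegSlopes} and the Appendix formulas for $\theta^+_{\Delta\ell}$, locate the kink at $h=0$ mapping to $z_{\rm sw}$, and handle $\Delta k$ through the shift action of $f^{\cS^{(-d)}}$ under $\varphi$. The one methodological difference is in obtaining $w=k^2\delta$: you propose to fix $w=0$ at $\delta=0$ via Lemma~\ref{le:tildegJApproxFixedPoint} and then differentiate in $\delta$, whereas the paper instead applies Lemma~\ref{le:tildegJApproxFixedPoint} a second time on the \emph{inner} boundary (where $\det(P_{\cG^+[k+1,\Delta\ell]})=0$), obtaining $f^{\cG^+[k,\Delta\ell]}(x^{\rm int}_{-d}) = x^{\rm int}_{-d} - s^{\rm step}_{-d}\zeta_{-d} + \cO(\rho_{\rm max}^k)$ there, and then linearly interpolates between the two boundary values. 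The paper's two-point interpolation is slightly cleaner because it avoids differentiating a $k$-fold composition with respect to parameters and immediately yields the error bound $\cO(1/k^2)$; your differentiation approach would work but requires a separate smoothness argument to control higher-order terms in $\delta$. Also, a minor imprecision: the number of $f^{\cS^{(-d)}}$-applications absorbed into $T$ is $\Delta k = \cO(1)$, not $\cO(k)$; the $\cO(k)$ compositions occur inside $f^{\cG^+[k,\Delta\ell]}$ itself, where your bound $\lambda^{\cO(k)}=\cO(1)$ is indeed the relevant control.
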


Theorem \ref{th:main} explicitly indicates how $g$ (\ref{eq:g}) approximates the dynamics of $f$ (\ref{eq:f}).
Specifically, $g$ can be used to approximate the return map $F$ via the formula (\ref{eq:F}),
where the parameters of $g$ are given by (\ref{eq:aLaRw2})
and the relationship of $F$ to $f$ is given by (\ref{eq:F2}).
For $x \in \Xi$, either $u(x) \notin \Omega_{\Delta \ell}$, in which case (\ref{eq:F}) is not well-defined,
or $F(x)$ corresponds to a different sequence of iterates of $f$ than that given by (\ref{eq:F2}).
These discrepancies have not been seen to cause qualitatively different dynamics for $f$ and $g$ because
these are continuous maps and $\Xi$ constitutes a vanishingly small fraction of $\Phi$ as $k \to \infty$.

\section{Discussion}
\label{sec:conc}
\setcounter{equation}{0}

This paper continues the work of \cite{Si17c} to rigorously characterise the
dynamics of (\ref{eq:f}) near an $\cS$-shrinking point (where $\cS = \cF[\ell,m,n]$).
Relative to the distance from an $\cS$-shrinking point,
of all nearby mode-locking regions the $\cG^\pm_k$-mode-locking regions are the largest.
This is because, symbolically, they belong to the first level of complexity relative to $\cS$.
The $\cG^\pm_k$-mode-locking regions, and their shrinking points, were analysed in \cite{Si17c}.
The results of the present paper help explain the dynamics that occurs between the $\cG^\pm_k$-mode-locking regions:
higher period solutions, quasiperiodic dynamics, and chaos.

We have shown that such dynamics is captured by a skew sawtooth map (\ref{eq:g})
with three parameters: the slopes $a_L$ and $a_R$, and the vertical displacement $w$.
To relate (\ref{eq:f}) to (\ref{eq:g}) quantitatively,
we defined an array of sectors $\Sigma^\pm_{k,\Delta \ell}$ near the $\cS$-shrinking point.
Within each $\Sigma^\pm_{k,\Delta \ell}$ the dynamics of (\ref{eq:f}) is well-approximated by
that of (\ref{eq:g}) with fixed formulas for $a_L$, $a_R$ and $w$ in terms of $(\delta,\theta)$-coordinates.
We identified a region $\Phi$ to which forward orbits regularly return,
and considered the first return map $F : \Phi \to \Phi$ that
equals the composition of (\ref{eq:f}) with itself many times (proportional to $k n$).
Theorem \ref{th:main} indicates precisely how (\ref{eq:g}) can be used to approximate $F$.

The results of \S\ref{sub:homotopy} describe the nature of the attracting invariant set $\Lambda_{\Delta \ell}$
on which the dynamics is well approximated by (\ref{eq:g}).
By Lemma \ref{le:homotopy2}, $\Lambda_{\Delta \ell}$ is homotopic to a loop
with a cylindrical topology on $\Phi$.
This tells us that the invariant set $\cup_{i \ge 0} f^i(\Lambda_{\Delta \ell})$ of (\ref{eq:f})
is either an invariant loop or a complicated geometric object that is homotopic to an invariant loop in $\mathbb{R}^N$.

For the map (\ref{eq:g}), the value of $w$ varies linearly from $0$ to $1$ as we move from
the outer boundary of $\Sigma^\pm_{k,\Delta \ell}$ to the inner boundary of $\Sigma^\pm_{k,\Delta \ell}$,
and so is given by $w = k^2 \delta$.
Lemma \ref{le:tildegSlopes} essentially tells us that the slopes $a_L$ and $a_R$ are proportional to $\lambda^k$,
where $\lambda$ is the critical eigenvalue of $M_{\cS}$ (that is, $\lambda = 1$ at the $\cS$-shrinking point).
By equations (7.43)--(7.44) of \cite{Si17c},
in $\Sigma^+_{k,\Delta \ell}$ we have $\lambda^k = \tan(\theta) + \cO \!\left( \frac{1}{k} \right)$,
whereas in $\Sigma^-_{k,\Delta \ell}$ we have $\lambda^k = \frac{1}{\tan(\theta)} + \cO \!\left( \frac{1}{k} \right)$.
The slopes $a_L$ and $a_R$ are also such that $a_R = 1$
on one of the two linear boundaries of $\Sigma^\pm_{k,\Delta \ell}$
and either $a_L = 1$ or $a_L = -1$ on the other linear boundary of $\Sigma^\pm_{k,\Delta \ell}$,
see Table \ref{tb:aLaR}.

The dynamics in $\Sigma^\pm_{k,\Delta \ell}$
corresponds to a two-dimensional cross-section of the three-dimensional parameter space of (\ref{eq:g})
as characterised by the value of $\frac{a_R}{a_L}$ (which is constant throughout $\Sigma^\pm_{k,\Delta \ell}$).
Qualitatively, there are two distinct scenarios as determined by the sign of $\frac{a_R}{a_L}$.
If $\frac{a_R}{a_L} > 0$, then (\ref{eq:g}) is a homeomorphism and has a unique rotation number.
Periodic dynamics occurs within mode-locking regions that exhibit shrinking points
and non-periodic dynamics is quasiperiodic.
The value of $\frac{a_R}{a_L}$ dictates the overall
fatness of the mode-locking regions, see Fig.~\ref{fig:modeLockSkewSaw50}.
This explains the presence of the roughly horizontal strip in the left half of Fig.~\ref{fig:modeLockApprox50}
where mode-locking regions are relatively sparse.
Here $\cG^+[k,0]$ and $\cG^+[k,1]$-shrinking points are relatively close together,
hence $\theta^+_0 - \theta^+_1$ is relatively small.
For sectors between these two sequences of shrinking points we have
$\theta_{\rm min} = \theta^+_1$ and $\theta_{\rm max} = \theta^+_0$,
thus the value of $\frac{a_R}{a_L} = \frac{\tan(\theta_{\rm min})}{\tan(\theta_{\rm max})}$
is relatively small and so the mode-locking regions are relatively narrow.
It remains to quantify this observation by calculating, say, the fraction of
the parameter space of (\ref{eq:g}), with $\frac{a_R}{a_L} > 0$ fixed,
for which (\ref{eq:g}) has a rational rotation number.

If $\frac{a_R}{a_L} < 0$, then (\ref{eq:g}) is not invertible.
There may be coexisting attractors and chaotic dynamics \cite{CaGa96}.
Mode-locking regions do not have shrinking points but rather terminate
due to a loss of stability by attaining a stability multiplier of $-1$.
It remains to further understand these dynamics
to explain how curves connecting shrinking points can form a boundary for chaotic dynamics,
as identified numerically in \cite{SiMe08b}.

\appendix

\section{Additional proofs}
\label{app:proofs}
\setcounter{equation}{0}

\begin{proof}[Proof of Lemma \ref{le:hLhR}]
For brevity we derive (\ref{eq:hL2}) and (\ref{eq:hR2}) only for $\Delta \ell \ge 1$.
The result for $\Delta \ell = 0$ can be obtained in a similar fashion (and is a little simpler).

By (\ref{eq:centreDyns}) and (\ref{eq:hL}),
\begin{equation}
h^L_{\Delta \ell} = s^{\rm step}_{-d} + h^R_{\Delta \ell} \lambda \;.
\label{eq:hLhRProof4}
\end{equation}
Then (\ref{eq:hL2}) is obtained by combining (\ref{eq:hR2}), 
$\lambda = 1 + \cO \!\left( \eta, \nu \right)$, and equation (A.9) of \cite{Si17c}:
\begin{equation}
s^{\rm step}_{-d} = \frac{a t_{-d}}{c t_{(\ell-1)d}} \nu +
\cO \!\left( \left( \eta,\nu \right)^2 \right).
\label{eq:gammamd}
\end{equation}
It therefore just remains for us to derive (\ref{eq:hR2}).

We begin by manipulating the numerator of (\ref{eq:hLhRProof2}),
where $\left\{ x^{\cS^{\overline{\ell d}}}_i \right\}$
denotes the unique $\cS^{\overline{\ell d}}$-cycle:
\begin{align}
e_1^{\sf T} f^{\cX^{\overline{0}} \left( \cY^{\overline{0}} \cX \right)^{\Delta \ell}} \!\left( x^{\rm int}_{-d} \right)
&= e_1^{\sf T} f^{\cX \left( \cY^{\overline{0}} \cX \right)^{\Delta \ell}} \!\left( x^{\rm int}_{-d} \right) \nonumber \\
&= e_1^{\sf T} f^{\left( \cS^{\overline{\ell d}} \right)^{\Delta \ell} \cX} \!\left( x^{\rm int}_{-d} \right) \nonumber \\
&= e_1^{\sf T} f^{\left( \cS^{\overline{\ell d}} \right)^{\Delta \ell} \cX}
\!\left( x^{\cS^{\overline{\ell d}}}_0 \right) +
e_1^{\sf T} M_{\left( \cS^{\overline{\ell d}} \right)^{\Delta \ell} \cX}
\left( x^{\rm int}_{-d} - x^{\cS^{\overline{\ell d}}}_0 \right) \nonumber \\
&= e_1^{\sf T} f^{\cX}
\!\left( x^{\cS^{\overline{\ell d}}}_0 \right) +
e_1^{\sf T} M_{\cX} M_{\cS^{\overline{\ell d}}}^{\Delta \ell}
\left( x^{\rm int}_{-d} - x^{\cS^{\overline{\ell d}}}_0 \right) \nonumber \\
&= s^{\cS^{\overline{\ell d}}}_{\ell d} +
e_1^{\sf T} M_{\cX} M_{\cS^{\overline{\ell d}}}^{\Delta \ell}
\left( x^{\rm int}_{-d} - x^{\cS^{\overline{\ell d}}}_0 \right).
\label{eq:hLhRProof11}
\end{align}

In a neighbourhood of the $\cS$-shrinking point,
one component of its corresponding mode-locking region is where $\eta \ge 0$ and $\nu \ge 0$,
while the other component is where $\eta \le \psi_1(\nu)$ and $\nu \le \psi_2(\eta)$,
for some $C^K$ functions $\psi_1$ and $\psi_2$ with
$\psi_1(0) = \psi_1'(0) = \psi_2(0) = \psi_2'(0) = 0$, see Theorem 6.9 of \cite{Si17c}.

First suppose $\eta = \psi_1(\nu)$.
Then $s^{\cS^{\overline{\ell d}}}_{\ell d} = 0$
and so by (\ref{eq:hLhRProof11}) the numerator of (\ref{eq:hLhRProof2}) is
\begin{align}
e_1^{\sf T} f^{\cX^{\overline{0}} \left( \cY^{\overline{0}} \cX \right)^{\Delta \ell}} \!\left( x^{\rm int}_{-d} \right)
&= e_1^{\sf T} M_{\cX} M_{\cS^{\overline{\ell d}}}^{\Delta \ell}
\left( x^{\rm int}_{-d} - x^{\cS^{\overline{\ell d}}}_0 \right).
\label{eq:hLhRProof12}
\end{align}
By (\ref{eq:u0Identity}),
\begin{equation}
e_1^{\sf T} M_{\cX} M_{\cS^{\overline{\ell d}}}^{\Delta \ell} \big|_{(\eta,\nu) = (0,0)} =
\frac{c t_{(\ell+1)d}}{b t_d} \,u_0^{\sf T}
M_{\cS^{\overline{\ell d}}}^{\Delta \ell} \left( I - M_{\cS^{\overline{\ell d}}} \right) \big|_{(\eta,\nu) = (0,0)} \;.
\label{eq:hLhRProof13}
\end{equation}
Since $x^{\cS^{\overline{\ell d}}}_0$ is a fixed point of $f^{\cS^{\overline{\ell d}}}$,
we have $x^{\cS^{\overline{\ell d}}}_0 = \left( I - M_{\cS^{\overline{\ell d}}} \right)^{-1}
P_{\cS^{\overline{\ell d}}} B \mu$,
from which we obtain
\begin{align}
\left( I - M_{\cS^{\overline{\ell d}}} \right) \left( x^{\rm int}_{-d} - x^{\cS^{\overline{\ell d}}}_0 \right)
&= \left( I - M_{\cS^{\overline{\ell d}}} \right) x^{\rm int}_{-d} - P_{\cS^{\overline{\ell d}}} B \mu \nonumber \\
&= x^{\rm int}_{-d} - \left( M_{\cS^{\overline{\ell d}}} x^{\rm int}_{-d} + P_{\cS^{\overline{\ell d}}} B \mu \right) \nonumber \\
&= x^{\rm int}_{-d} - f^{\cS^{\overline{\ell d}}} \!\left( x^{\rm int}_{-d} \right) \nonumber \\
&= -s^{\rm step}_{-d} \zeta_{-d} \;,
\label{eq:hLhRProof14}
\end{align}
using also (\ref{eq:centreDyns}) in the last line.
By combining (\ref{eq:fourtIdentity}), (\ref{eq:gammamd}), (\ref{eq:hLhRProof12}),
(\ref{eq:hLhRProof13}), (\ref{eq:hLhRProof14})
and the formula for $\kappa^+_{\Delta \ell}$ given in Appendix \ref{app:formulas}, we obtain
\begin{equation}
e_1^{\sf T} f^{\cX^{\overline{0}} \left( \cY^{\overline{0}} \cX \right)^{\Delta \ell}} \!\left( x^{\rm int}_{-d} \right) =
\kappa^+_{\Delta \ell} \nu + \cO \!\left( \nu^2 \right),
\label{eq:hLhRProof15}
\end{equation}
under the assumption $\eta = \psi_1(\nu)$.

Now suppose $\nu = \psi_2(\eta)$.
Then $s^{\cS^{\overline{\ell d}}}_0 = 0$ and the $\cS$-cycle is unique with
$x^{\cS}_i = x^{\cS^{\overline{\ell d}}}_{i+d}$, for each $i$.
Thus by (\ref{eq:varphimdgammamd}),
\begin{equation}
x^{\rm int}_{-d} = \left( I - \zeta_{-d} e_1^{\sf T} \right) x^{\cS^{\overline{\ell d}}}_0
= x^{\cS^{\overline{\ell d}}}_0 \;.
\label{eq:hLhRProof16}
\end{equation}
The numerator of (\ref{eq:hLhRProof2}) is then
\begin{align}
e_1^{\sf T} f^{\cX^{\overline{0}} \left( \cY^{\overline{0}} \cX \right)^{\Delta \ell}} \!\left( x^{\rm int}_{-d} \right)
&= e_1^{\sf T} f^{\cX \left( \cY^{\overline{0}} \cX \right)^{\Delta \ell}} \!\left( x^{\cS^{\overline{\ell d}}}_0 \right) \nonumber \\
&= e_1^{\sf T} f^{\cX} \!\left( x^{\cS^{\overline{\ell d}}}_0 \right) \nonumber \\
&= e_1^{\sf T} x^{\cS^{\overline{\ell d}}}_{\ell d} \nonumber \\
&= s^{\cS^{\overline{\ell d}}}_{\ell d} \nonumber \\
&= \frac{t_{(\ell-1)d}}{t_{-d}} \eta + \cO \!\left( \eta^2 \right),
\label{eq:hLhRProof17}
\end{align}
where in the last step we have used (\ref{eq:fourtIdentity}) and equation (58) of \cite{SiMe10}.

By combining (\ref{eq:hLhRProof15}) and (\ref{eq:hLhRProof17}) we obtain
\begin{equation}
e_1^{\sf T} f^{\cX^{\overline{0}} \left( \cY^{\overline{0}} \cX \right)^{\Delta \ell}} \!\left( x^{\rm int}_{-d} \right) =
\frac{t_{(\ell-1)d}}{t_{-d}} \eta + \kappa^+_{\Delta \ell} \nu + \cO \!\left( \left( \eta, \nu \right)^2 \right).
\label{eq:hLhRProof18}
\end{equation}
Then (\ref{eq:hR2}) is given by (\ref{eq:hLhRProof18}) divided by (\ref{eq:denominator}).
\end{proof}

\begin{proof}[Proof of Lemma \ref{le:beta}]
For brevity we prove the result only for $\cG^+[k,\Delta \ell]$.
The result for $\cG^+[k,\Delta \ell]^{\overline{0}}$ can be proved in the same fashion.

\myStep{1}
We first use Lemma \ref{le:denominator} to show that
\begin{equation}
e_1^{\sf T} M_{\cX^{\overline{0}} \left( \cY^{\overline{0}} \cX \right)^{\Delta \ell}} \zeta_{-d} < 0 \;,
\label{eq:betaProof1}
\end{equation}
throughout $\Sigma^+_{k,\Delta \ell}$.

At the $\cS$-shrinking point the $\cS^{\overline{0}}$-cycle, denoted $\{ y_i \}$,
is assumed to be admissible with only $y_0$ and $y_{\ell d}$ on the switching manifold.
Therefore $t_{(\ell-1)d} < 0$ and $t_{-d} > 0$.
Since $\rho_{\rm max} < 1$, we have $c > 0$.
Also $\left( \kappa^+_{\Delta \ell} - \kappa^+_{\Delta \ell - 1} \right) a > 0$ by assumption.
By applying these inequalities to (\ref{eq:denominator}) we obtain (\ref{eq:betaProof1}).

\myStep{2}
Next we show that $h^L_{\Delta \ell} < h^R_{\Delta \ell}$.

From now on we only consider parameter values in $\Sigma^+_{k,\Delta \ell}$.
Then ${\rm sgn}(\nu) = {\rm sgn}(a)$, see \S\ref{sub:nearby}.
Also $\eta$ and $\nu$ are $\cO \!\left( \frac{1}{k} \right)$.
Thus, by (\ref{eq:gammamd}), $s^{\rm step}_{-d} < 0$ for sufficiently large values of $k$.

By (\ref{eq:uv}), (\ref{eq:hL}), and (\ref{eq:centreDyns}),
\begin{equation}
h^L_{\Delta \ell} = s^{\rm step}_{-d} + h^R_{\Delta \ell} \lambda \;.
\label{eq:betaProof10}
\end{equation}
Since $s^{\rm step}_{-d} = \cO \!\left( \frac{1}{k} \right)$,
$\lambda = 1 + \cO \!\left( \frac{1}{k} \right)$,
and $h^L_{\Delta \ell}$ and $h^R_{\Delta \ell}$ are $\cO \!\left( \frac{1}{k} \right)$ by (\ref{eq:hL2})--(\ref{eq:hR2}), we have
\begin{equation}
h^R_{\Delta \ell} - h^L_{\Delta \ell} = -s^{\rm step}_{-d} + \cO \!\left( \frac{1}{k^2} \right).
\label{eq:hRhLDiff}
\end{equation}
Therefore $h^L_{\Delta \ell} < h^R_{\Delta \ell}$
(throughout $\Sigma^+_{k,\Delta \ell}$) for sufficiently large values of $k$.

\myStep{3}
We have thus shown that $\Omega_{\Delta \ell}$ is well-defined by (\ref{eq:Omega}).
Moreover, $\Omega_{\Delta \ell}$ is an $\cO \!\left( \frac{1}{k} \right)$ distance from
the switching manifold, and so is $\Phi$.
In this step we show that $f^{\cG^+[k,\Delta \ell]}(\Phi)$ is also an
$\cO \!\left( \frac{1}{k} \right)$ distance from the switching manifold.

On the outer boundary of $\Sigma^+_{k,\Delta \ell}$,
the matrix $P_{\cG^+[k,\Delta \ell]^{(i)}}$ is singular
for either $i = 0$ or $i = (\ell^+_k + \Delta \ell - 1) d_k^+$.
For brevity we just treat the case $i=0$.
Then by (\ref{eq:tildegLApproxFixedPoint}), on the outer boundary of $\Sigma^+_{k,\Delta \ell}$
the point $f^{\cG^+[k,\Delta \ell]} \!\left( x^{\rm int}_{-d} \right)$ is an $\cO \!\left( \rho_{\rm max}^k \right)$
distance from the switching manifold.

In order to describe $f^{\cG^+[k,\Delta \ell]} \!\left( x^{\rm int}_{-d} \right)$
for parameter values on the inner boundary of $\Sigma^+_{k,\Delta \ell}$,
we first note that on this boundary $P_{\cG^+[k+1,\Delta \ell]}$ is singular.
Thus by (\ref{eq:tildegLApproxFixedPoint})
\begin{equation}
f^{\cG^+[k+1,\Delta \ell]} \!\left( x^{\rm int}_{-d} \right) = x^{\rm int}_{-d} + \cO \!\left( \rho_{\rm max}^k \right).
\label{eq:betaProof5}
\end{equation}
By (\ref{eq:cGplusFormula2}), $f^{\cG^+[k,\Delta \ell]} \!\left( x^{\rm int}_{-d} \right)$ is the unique inverse of 
$f^{\cG^+[k+1,\Delta \ell]} \!\left( x^{\rm int}_{-d} \right)$ under $f^{\cS^{(-d)}}$
that belongs to the range of $f^{\cS^{(-d)}}$.
Thus by (\ref{eq:centreDyns})
\begin{equation}
f^{\cG^+[k,\Delta \ell]} \!\left( x^{\rm int}_{-d} \right) =
x^{\rm int}_{-d} - \frac{s^{\rm step}_{-d}}{\lambda} \zeta_{-d} + \cO \!\left( \rho_{\rm max}^k \right),
\label{eq:betaProof6}
\end{equation}
on the inner boundary of $\Sigma^+_{k,\Delta \ell}$.
Therefore $f^{\cG^+[k,\Delta \ell]} \!\left( x^{\rm int}_{-d} \right)$ is an $\cO \!\left( \frac{1}{k} \right)$
distance from the switching manifold on the inner boundary of $\Sigma^+_{k,\Delta \ell}$.
It follows that $f^{\cG^+[k,\Delta \ell]} \!\left( x^{\rm int}_{-d} \right)$ is an $\cO \!\left( \frac{1}{k} \right)$
distance from the switching manifold throughout $\Sigma^+_{k,\Delta \ell}$.

Now we show that for any $x \in \Phi$ we have
\begin{equation}
f^{\cG^+[k,\Delta \ell]}(x) - f^{\cG^+[k,\Delta \ell]}
\!\left( x^{\rm int}_{-d} \right) = \cO \!\left( \frac{1}{k} \right),
\label{eq:betaProof7}
\end{equation}
which will complete our demonstration that $f^{\cG^+[k,\Delta \ell]}(\Phi)$ is an
$\cO \!\left( \frac{1}{k} \right)$ distance from the switching manifold.
In view of (\ref{eq:cGplusFormula}), since
$x - x^{\rm int}_{-d} = \cO \!\left( \frac{1}{k} \right)$ we have
$f^{\left( \cX \cY^{\overline{0}} \right)^{\Delta \ell} \hat{\cX}}(x) -
f^{\left( \cX \cY^{\overline{0}} \right)^{\Delta \ell} \hat{\cX}}
\!\left( x^{\rm int}_{-d} \right) = \cO \!\left( \frac{1}{k} \right)$
because $f^{\left( \cX \cY^{\overline{0}} \right)^{\Delta \ell} \hat{\cX}}$ is independent of $k$.
Since $\rho_{\rm max} < 1$, all eigenvalues of the matrix part of $f^{\cS^{(-d)}}$ have modulus less than $1$ except possibly $\lambda$.
But $\lambda^k = \cO(1)$ because $\lambda = 1 + \cO(\eta,\nu)$ and $\eta$ and $\nu$ are $\cO \!\left( \frac{1}{k} \right)$.
Therefore $\left( f^{\cS^{(-d)}} \right)^{k-\Delta \ell}$ is not expanding by more than an $\cO(1)$ factor,
which verifies (\ref{eq:betaProof7}) by (\ref{eq:cGplusFormula}).

\myStep{4}
Next we show that $\beta(y)$ is well-defined with $y = f^{\cG^+[k,\Delta \ell]}(x)$ and any $x \in \Phi$.

For any $y = v(h) \in W^c$, by (\ref{eq:centreDyns}) we have
\begin{align}
e_1^{\sf T} f^{\cX^{\overline{0}} \left( \cY^{\overline{0}} \cX \right)^{\Delta \ell}}
\!\left( f^{\cS^{(-d)}}(y) \right) -
e_1^{\sf T} f^{\cX^{\overline{0}} \left( \cY^{\overline{0}} \cX \right)^{\Delta \ell}}(y) &=
e_1^{\sf T} M_{\cX^{\overline{0}} \left( \cY^{\overline{0}} \cX \right)^{\Delta \ell}}
\left( f^{\cS^{(-d)}}(y) - y \right) \nonumber \\
&=
\left( s^{\rm step}_{-d} + (\lambda-1) h \right)
e_1^{\sf T} M_{\cX^{\overline{0}} \left( \cY^{\overline{0}} \cX \right)^{\Delta \ell}} \zeta_{-d} \;.
\label{eq:betaProof20}
\end{align}
Since $s^{\rm step}_{-d} < 0$, by (\ref{eq:betaProof1}) we have that
(\ref{eq:betaProof20}) is negative for sufficiently small values of $h$.
Moreover, since $s^{\rm step}_{-d}$ and $(\lambda-1)$ are $\cO \!\left( \frac{1}{k} \right)$,
(\ref{eq:betaProof20}) is negative if $h = \cO \!\left( \frac{1}{k} \right)$,
and this is also true for points $y$ sufficiently close to $W^c$.

For any $x \in \Phi$, $f^{\cG^+[k,\Delta \ell]}(x)$
is an $\cO \!\left( \frac{1}{k} \right)$ distance from the switching manifold
and an $\cO \!\left( \rho_{\rm max}^{k} \right)$ distance from $W^c$,
and the same is true for $f^{\cG^+[k + \Delta k,\Delta \ell]}(x)$ with $\Delta k = \cO(1)$.
This shows that (\ref{eq:betaProof20}) is negative using
$y = f^{\cG^+[k + \Delta k,\Delta \ell]}(x)$.
Therefore the left hand-side of (\ref{eq:betaDefn}) is an increasing function of $\Delta k$.
Thus $\beta(y)$ is well-defined.

\myStep{5}
Now we show that if $\Delta k = \beta \!\left( f^{\cG^+[k,\Delta \ell]}(x) \right)$ for some $x \in \Phi$,
then $f^{\cG^+[k + \Delta k,\Delta \ell]}(x) \in \Phi$.

Let $\omega_{-d}^\perp$ denote the orthogonal complement of $\omega_{-d}$.
We define a function $\psi : \omega_{-d}^\perp \to \mathbb{R}$ by
\begin{equation}
\psi(q) \defeq h^R_{\Delta \ell} -
\frac{e_1^{\sf T} M_{\cX^{\overline{0}} \left( \cY^{\overline{0}} \cX \right)^{\Delta \ell}} q}
{e_1^{\sf T} M_{\cX^{\overline{0}} \left( \cY^{\overline{0}} \cX \right)^{\Delta \ell}} \zeta_{-d}} \;.
\label{eq:psi}
\end{equation}
Given any $q \in \omega_{-d}^\perp$, let $x = x^{\rm int}_{-d} + \psi(q) \zeta_{-d} + q$.
It is straight-forward to show that\linebreak				
$e_1^{\sf T} f^{\cX^{\overline{0}} \!\left( \cY^{\overline{0}} \cX \right)^{\Delta \ell}}(x) = 0$.
Thus if $q$ is sufficiently small and $x \in {\rm range} \!\left( f^{\cS^{(-d)}} \right)$,
then $x \in H_{k,\Delta \ell,Q}$.

Choose any $x \in \Phi$ and let $\Delta k = \beta \!\left( f^{\cG^+[k,\Delta \ell]}(x) \right)$.
By the definition of $\beta$ we have 
\begin{align}
e_1^{\sf T} f^{\cX^{\overline{0}} \left( \cY^{\overline{0}} \cX \right)^{\Delta \ell}}
\!\left( f^{\cG^+[k + \Delta k,\Delta \ell]}(x) \right) &> 0 \;, \label{eq:betaProof30} \\
e_1^{\sf T} f^{\cX^{\overline{0}} \left( \cY^{\overline{0}} \cX \right)^{\Delta \ell}}
\!\left( f^{\cG^+[k + \Delta k - 1,\Delta \ell]}(x) \right) &\le 0 \label{eq:betaProof31} \;.
\end{align}
Write $f^{\cG^+[k + \Delta k,\Delta \ell]}(x) = x^{\rm int}_{-d} + h \zeta_{-d} + q$.
Let $h_1 = \psi(q)$ and $x_1 = x^{\rm int}_{-d} + h_1 \zeta_{-d} + q$.
Then $x_1 \in H_{k,\Delta \ell,Q}$, assuming $Q$ is sufficiently large.
Moreover,
\begin{equation}
e_1^{\sf T} f^{\cX^{\overline{0}} \left( \cY^{\overline{0}} \cX \right)^{\Delta \ell}}
\!\left( f^{\cG^+[k + \Delta k,\Delta \ell]}(x) \right) =
(h-h_1) e_1^{\sf T} M_{\cX^{\overline{0}} \left( \cY^{\overline{0}} \cX \right)^{\Delta \ell}} \zeta_{-d} \;,
\nonumber
\end{equation}
thus $h < h_1$ by (\ref{eq:betaProof1}) and (\ref{eq:betaProof30}).

Similarly write $f^{\cG^+[k + \Delta k - 1,\Delta \ell]}(x) = x^{\rm int}_{-d} + \hat{h} \zeta_{-d} + \hat{q}$.
Let $\hat{h}_1 = \psi(\hat{q})$ and
$\hat{x}_1 = x^{\rm int}_{-d} + \hat{h}_1 \zeta_{-d} + \hat{q}$.
Then $\hat{x}_1 \in H_{k,\Delta \ell,Q}$, assuming $Q$ is sufficiently large.
Let $h_2 = s^{\rm step}_{-d} + \hat{h}_1 \lambda$
and $x_2 = x^{\rm int}_{-d} + h_2 \zeta_{-d} + q$.
Then $x_2 \in f^{\cS^{(-d)}} \!\left( H_{k,\Delta \ell,Q} \right)$ because
$x_2 = f^{\cS^{(-d)}} \!\left( \hat{x}_1 \right)$.
Moreover,
\begin{equation}
e_1^{\sf T} f^{\cX^{\overline{0}} \left( \cY^{\overline{0}} \cX \right)^{\Delta \ell}}
\!\left( f^{\cG^+[k + \Delta k,\Delta \ell]}(x) \right) =
(h-h_2) e_1^{\sf T} M_{\cX^{\overline{0}} \left( \cY^{\overline{0}} \cX \right)^{\Delta \ell}} \zeta_{-d} \;,
\nonumber
\end{equation}
thus $h \ge h_2$ by (\ref{eq:betaProof1}) and (\ref{eq:betaProof31}).

Let $\alpha = \frac{h - h_2}{h_1 - h_2}$.
Then $f^{\cG^+[k + \Delta k,\Delta \ell]}(x) = \alpha x_1 + (1-\alpha) x_2$ and $0 \le \alpha < 1$, hence
$f^{\cG^+[k + \Delta k,\Delta \ell]}(x) \in \Phi$.

\myStep{6}
Choose any $x \in \Phi$ and suppose $f^{\cG^+[k + \Delta k,\Delta \ell]}(x) \in \Phi$
for some $\Delta k = \cO(1)$.
To complete the proof it remains to show that $\Delta k = \beta \!\left( f^{\cG^+[k,\Delta \ell]}(x) \right)$.

Write $f^{\cG^+[k + \Delta k,\Delta \ell]}(x) = \alpha x_1 + (1-\alpha) x_2$
where $x_1 \in H_{k,\Delta \ell,Q}$, 
$x_2 \in f^{\cS^{(-d)}} \!\left( H_{k,\Delta \ell,Q} \right)$
and $0 \le \alpha < 1$.
Since $x_1 = x^{\rm int}_{-d} + h^R_{\Delta \ell} \zeta_{-d} + \cO \!\left( \rho_{\rm max}^k \right)$
and $x_2 = x^{\rm int}_{-d} + h^L_{\Delta \ell} \zeta_{-d} + \cO \!\left( \rho_{\rm max}^k \right)$,
we have $x_1 - x_2 = \left( h^R_{\Delta \ell} - h^L_{\Delta \ell} \right) \zeta_{-d}
+ \cO \!\left( \rho_{\rm max}^k \right)$.
Thus by (\ref{eq:betaProof1}),
\begin{equation}
e_1^{\sf T} M_{\cX^{\overline{0}} \left( \cY^{\overline{0}} \cX \right)^{\Delta \ell}} (x_1-x_2) < 0 \;.
\label{eq:betaProof40}
\end{equation}
By writing
$f^{\cG^+[k + \Delta k,\Delta \ell]}(x) = x_1 - (1-\alpha)(x_1-x_2)$, we obtain
\begin{equation}
e_1^{\sf T} M_{\cX^{\overline{0}} \left( \cY^{\overline{0}} \cX \right)^{\Delta \ell}}
\left( f^{\cG^+[k + \Delta k,\Delta \ell]}(x) \right) =
-(1-\alpha) e_1^{\sf T} M_{\cX^{\overline{0}} \left( \cY^{\overline{0}} \cX \right)^{\Delta \ell}} (x_1-x_2) \;,
\end{equation}
which is positive by (\ref{eq:betaProof40}).
This verifies (\ref{eq:betaProof30}).
Equation (\ref{eq:betaProof31}) can be verified in the same fashion which
shows that $\Delta k = \beta \left( f^{\cG^+[k,\Delta \ell]}(x) \right)$ by the definition of $\beta$.
\end{proof}

\begin{proof}[Proof of Lemma \ref{le:tildegSlopes}]
By (\ref{eq:fS2}) and (\ref{eq:uv}),
\begin{equation}
u \!\left( f^{\cG^+[k,\Delta \ell]}(v(h)) \right) =
\omega_{-d}^{\sf T} \Big( M_{\cG^+[k,\Delta \ell]} \left( x^{\rm int}_{-d} + h \zeta_{-d} \right) +
P_{\cG^+[k,\Delta \ell]} B - x^{\rm int}_{-d} \Big),
\nonumber
\end{equation}
thus
\begin{equation}
\frac{d}{d h} \,u \!\left( f^{\cG^+[k,\Delta \ell]}(v(h)) \right) =
\omega_{-d}^{\sf T} M_{\cG^+[k,\Delta \ell]} \zeta_{-d} \;.
\label{eq:tildegSlopesProof1}
\end{equation}
By (\ref{eq:cGplusFormula}) and (\ref{eq:fastDyns}),
\begin{equation}
M_{\cG^+[k,\Delta \ell]} = M_{\cS^{(-d)}}^{k - \Delta \ell} \zeta_{-d} \omega_{-d}^{\sf T}
M_{\hat{\cX}} M_{\cS^{\overline{\ell d}}}^{\Delta \ell} + \cO \!\left( \rho_{\rm max}^k \right).
\label{eq:tildegSlopesProof2}
\end{equation}
Thus using $M_{\cS^{(-d)}} \zeta_{-d} = \lambda \zeta_{-d}$
and $\omega_{-d}^{\sf T} \zeta_{-d} = 1$ we obtain
\begin{equation}
\frac{d}{d h} \,u \!\left( f^{\cG^+[k,\Delta \ell]}(v(h)) \right) =
\lambda^{k - \Delta \ell} \omega_{-d}^{\sf T}
M_{\hat{\cX}} M_{\cS^{\overline{\ell d}}}^{\Delta \ell} \zeta_{-d} + \cO \!\left( \rho_{\rm max}^k \right).
\label{eq:tildegSlopesProof3}
\end{equation}
By Lemma 7.1 of \cite{Si17c},
\begin{equation}
\frac{d}{d h} \,u \!\left( f^{\cG^+[k,\Delta \ell]}(v(h)) \right) =
\lambda^{k - \Delta \ell} u_0^{\sf T}
M_{\cS^{\overline{\ell d}}}^{\Delta \ell} M_{\hat{\cX}} v_0 \big|_{(\eta,\nu) = (0,0)} +
\cO \!\left( \frac{1}{k} \right),
\label{eq:tildegSlopesProof4}
\end{equation}
where $u_0^{\sf T}$ and $v_0$ are the left and right eigenvectors of $M_{\cS}$
with $u_0^{\sf T} v_0 = 1$ and $e_1^{\sf T} v_0 = 1$.

In $\Sigma^+_{k,\Delta \ell}$ we have
$\lambda^{k - \Delta \ell} = \lambda^k + \cO \!\left( \frac{1}{k} \right)
= -\tan(\theta) + \cO \!\left( \frac{1}{k} \right)$ (see equation (A.24) of \cite{Si17c}).
Then by the identity $v_{-d} = \frac{-t_d M_{\hat{\cX}} v_0}{t_{-d}}$
(Lemma 6.6 of \cite{Si17c}) and the formula for $\kappa^+_{\Delta \ell}$ (\ref{eq:kappaPlus}),
equation (\ref{eq:tildegSlopesProof4}) reduces to (\ref{eq:tildegLSlope}).
Equation (\ref{eq:tildegRSlope}) can be derived in a similar fashion\removableFootnote{
Towards the end of the Proof of Lemma \ref{le:tildegSlopes} (where?)
we require the formula $\omega_{-d}^{\sf T} M_{\hat{\cX}^{\overline{0}}} \zeta_{-d} =
\omega_{\ell d}^{\sf T} M_{\check{\cX}} \zeta_{\ell d}$,
which can be derived as follows.

Write
\begin{equation}
\zeta_{-d} = C_1 M_{\hat{\cX}} M_{\cY} \zeta_{\ell d} \;, \qquad
\omega_{-d}^{\sf T} = C_2 \omega_{\ell d}^{\sf T} M_{\check{\cY}} M_{\cX^{\overline{0}}} \;,
\end{equation}
for some $C_1, C_2 \in \mathbb{R}$.
Then
\begin{align}
1 &= \omega_{-d}^{\sf T} \zeta_{-d} \nonumber \\
&= C_1 C_2 \omega_{\ell d}^{\sf T} M_{\check{\cY}} M_{\cX^{\overline{0}}}
M_{\hat{\cX}} M_{\cY} \zeta_{\ell d} \nonumber \\
&= C_1 C_2 \omega_{\ell d}^{\sf T} M_{\check{\cY}}
M_{\check{\cX}} M_{\cX} M_{\cY} \zeta_{\ell d} \nonumber \\
&= \lambda C_1 C_2 \omega_{\ell d}^{\sf T} M_{\check{\cY}} M_{\check{\cX}} \zeta_{\ell d} \nonumber \\
&= \lambda^2 C_1 C_2 \omega_{\ell d}^{\sf T} \zeta_{\ell d} \nonumber \\
&= \lambda^2 C_1 C_2 \;.
\end{align}
Thus
\begin{align}
\omega_{-d}^{\sf T} M_{\hat{\cX}^{\overline{0}}} \zeta_{-d}
&= \frac{1}{\lambda^2} \omega_{\ell d}^{\sf T} M_{\check{\cY}} M_{\cX^{\overline{0}}}
M_{\hat{\cX}^{\overline{0}}} M_{\hat{\cX}} M_{\cY} \zeta_{\ell d} \nonumber \\
&= \frac{1}{\lambda^2} \omega_{\ell d}^{\sf T} M_{\check{\cY}} M_{\check{\cX}}
M_{\cX^{\overline{0}}} M_{\hat{\cX}} M_{\cY} \zeta_{\ell d} \nonumber \\
&= \frac{1}{\lambda} \omega_{\ell d}^{\sf T}
M_{\cX^{\overline{0}}} M_{\hat{\cX}} M_{\cY} \zeta_{\ell d} \nonumber \\
&= \frac{1}{\lambda} \omega_{\ell d}^{\sf T} M_{\check{\cX}} M_{\cX} M_{\cY} \zeta_{\ell d} \nonumber \\
&= \omega_{\ell d}^{\sf T} M_{\check{\cX}} \zeta_{\ell d} \;.
\end{align}
}.
\end{proof}

\begin{proof}[Proof of Theorem \ref{th:main}]
For brevity we just prove the result in the case $a < 0$.

\myStep{1}
Define $g_{\rm lift,approx} : [0,1) \to \mathbb{R}$ by
\begin{equation}
g_{\rm lift,approx} \defeq \begin{cases}
\varphi \circ u \circ f^{\cG^+[k,\Delta \ell]} \circ v \circ \varphi^{-1} \;, &
{\rm on~} \left[ 0, \frac{-h^L_{\Delta \ell}}{h^R_{\Delta \ell} - h^L_{\Delta \ell}} \right], \\
\varphi \circ u \circ f^{\cG^+[k,\Delta \ell]^{\overline{0}}} \circ v \circ \varphi^{-1} \;, &
{\rm on~} \left[ \frac{-h^L_{\Delta \ell}}{h^R_{\Delta \ell} - h^L_{\Delta \ell}}, 1 \right).
\end{cases}
\label{eq:gliftApprox}
\end{equation}
We first show that
\begin{equation}
g_{\rm lift,approx} = g_{\rm lift} + \cO \!\left( \frac{1}{k} \right).
\label{eq:gliftApprox2}
\end{equation}

Both $g_{\rm lift,approx}$ and $g_{\rm lift}$ are
real-valued, piecewise-linear continuous functions on $[0,1)$ that are comprised of two pieces.
The slopes of $g_{\rm lift,approx}$ are given by (\ref{eq:tildegLSlope}) and (\ref{eq:tildegRSlope}).
The slopes of $g_{\rm lift}$ are $a_L$ and $a_R$ (\ref{eq:aLaRw2}).
To show that the leading order components of $g_{\rm lift,approx}$ and $g_{\rm lift}$ are the same,
observe that $\theta^+_{\Delta \ell} < \theta^+_{\Delta \ell - 1}$
because $a < 0$ and $\left( \kappa^+_{\Delta \ell} - \kappa^+_{\Delta \ell - 1} \right) a > 0$ by assumption.
Thus $\theta_{\rm min} = \theta^+_{\Delta \ell}$ and $\theta_{\rm max} = \theta^+_{\Delta \ell - 1}$.
Then by (\ref{eq:kappaPlus}), $a_L$ and $a_R$
match (\ref{eq:tildegLSlope}) and (\ref{eq:tildegRSlope}) to leading order.
The kink of $g_{\rm lift,approx}$ is located at $z = \frac{-h^L_{\Delta \ell}}{h^R_{\Delta \ell} - h^L_{\Delta \ell}}$.
Using (\ref{eq:hL2}), (\ref{eq:hR2}), and
$\frac{\nu}{\eta} = \frac{t_{(\ell-1)d}}{t_d} \,\tan(\theta)$ (\ref{eq:polarCoords}),
we see that this matches $z_{\rm sw} = \frac{a_R - 1}{a_R - a_L}$ (the kink of $h_{\rm lift}$) to leading order.
To complete our demonstration of (\ref{eq:gliftApprox2}), we show
that the values of $g_{\rm lift,approx}$ and $g_{\rm lift}$
differ by $\cO \!\left( \frac{1}{k} \right)$ at their kink points.

The value of $g_{\rm lift}$ at its kink point is
\begin{equation}
g_{\rm lift}(z_{\rm sw}) = k^2 \delta + z_{\rm sw} \;.
\label{eq:gliftKinkValue}
\end{equation}
To evaluate $g_{\rm lift,approx}$ at $z = \frac{-h^L_{\Delta \ell}}{h^R_{\Delta \ell} - h^L_{\Delta \ell}}$, notice that
\begin{equation}
v \!\left( \varphi^{-1} \!\left( \frac{-h^L_{\Delta \ell}}{h^R_{\Delta \ell} - h^L_{\Delta \ell}} \right) \right) =
v(0) = x^{\rm int}_{-d} \;.
\label{eq:mainProof10}
\end{equation}
At any point on the outer boundary of $\Sigma^+_{k,\Delta \ell}$, where $\delta = 0$,
we have $\det \!\left( P_{\cG^+[k,\Delta \ell]} \right) = 0$.
This is because $a < 0$ and is a straight-forward
consequence of Theorem 2.4 of \cite{Si17c}\removableFootnote{
\begin{lemma}
For any well-defined $\Sigma^+_{k,\Delta \ell}$,
if $a < 0$ then $\delta = 0$ corresponds to 
$\det \!\left( P_{\cG^+[k,\Delta \ell]} \right) = 0$,
whereas if $a > 0$ then $\delta = 0$ corresponds to 
$\det \!\left( P_{\cG^+[k,\Delta \ell]^{\left( \left(
\tilde{\ell}-1 \right) d_k^+ \right)}} \right) = 0$.
\label{le:deltaZero}
\end{lemma}

\begin{proof}[Proof of Lemma \ref{le:deltaZero}]
Since $\Sigma^+_{k,\Delta \ell}$ is well-defined,
either $\cG^+[k,\Delta \ell]$ or $\cG^+[k,\Delta \ell-1]$-shrinking points exist (or both).
Suppose for simplicity that $\cG^+[k,\Delta \ell]$-shrinking points exist
(the result can be shown similarly with instead $\Delta \ell-1$).

We are working in $(\eta,\nu)$-coordinates about an $\cS$-shrinking point,
Fig.~\ref{fig:shrPointSchem}.
The $\nu$-axis corresponds to $\det \!\left( P_{\cS} \right) = 0$
and the $\eta$-axis corresponds to
$\det \!\left( P_{\cS^{((\ell-1)d)}} \right) = 0$.
Such coordinates can be computed for any shrinking point
which is unfolded by the parameters in a generic fashion.
As in \cite{Si17c}, we denote the analogous
coordinates about a $\cG^+[k,\Delta \ell]$-shrinking point by $\left( \tilde{\eta}, \tilde{\nu} \right)$.
The $\tilde{\nu}$-axis corresponds to
$\det \!\left( P_{\cG^+[k,\Delta \ell]} \right) = 0$,
and the $\tilde{\eta}$-axis corresponds to
$\det \!\left( P_{\cG^+[k,\Delta \ell]^{\left( \left(
\tilde{\ell}-1 \right) d_k^+ \right)}} \right) = 0$.

Theorem 2.4 of \cite{Si17c} tells us that the
$\left( \tilde{\eta}, \tilde{\nu} \right)$-coordinates are well-defined
and have the same orientation as the $(\eta,\nu)$-coordinates.
Moreover, in the proof of the theorem it was shown that both
$\tilde{\eta}$ and $\tilde{\nu}$-axes point in the direction of the first
quadrant of $(\eta,\nu)$-coordinates.

If $a < 0$, then $\theta^+_{\Delta \ell} \in \left( \frac{3 \pi}{2}, 2 \pi \right)$,
that is the $\cG^+[k,\Delta \ell]$-shrinking point lies in the fourth quadrant of
$(\eta,\nu)$-coordinates, as in Fig.~\ref{fig:shrPointSchem}.
Therefore the $\tilde{\nu}$-axis lies closer to $(\eta,\nu) = (0,0)$
than the $\tilde{\eta}$-axis.
Therefore the inner boundary of $\Sigma^+_{k,\Delta \ell}$ corresponds
to the $\tilde{\nu}$-axis and hence to $\det \!\left( P_{\cG^+[k,\Delta \ell]} \right) = 0$.

If instead $a > 0$, then $\theta^+_{\Delta \ell} \in \left( \frac{\pi}{2}, \pi \right)$,
so the $\cG^+[k,\Delta \ell]$-shrinking point lies in the second quadrant of
$(\eta,\nu)$-coordinates.
In this case the $\tilde{\eta}$-axis lies closer to $(\eta,\nu) = (0,0)$
and so the inner boundary of $\Sigma^+_{k,\Delta \ell}$ corresponds to
$\det \!\left( P_{\cG^+[k,\Delta \ell]^{\left( \left(
\tilde{\ell}-1 \right) d_k^+ \right)}} \right) = 0$.
\end{proof}
}.
Thus by Lemma \ref{le:tildegJApproxFixedPoint},
$f^{\cG^+[k,\Delta \ell]} \!\left( x^{\rm int}_{-d} \right) = x^{\rm int}_{-d} +
\cO \!\left( \rho_{\rm max}^k \right)$ when $\delta = 0$.

At any point on the inner boundary of $\Sigma^+_{k,\Delta \ell}$,
where $\delta = \frac{1}{k^2} + \cO \!\left( \frac{1}{k^3} \right)$ (see \S\ref{sub:sectors}),
we similarly have $\det \!\left( P_{\cG^+[k+1,\Delta \ell]} \right) = 0$.
Thus by Lemma \ref{le:tildegJApproxFixedPoint},
$f^{\cG^+[k+1,\Delta \ell]} \!\left( x^{\rm int}_{-d} \right) = x^{\rm int}_{-d} +
\cO \!\left( \rho_{\rm max}^k \right)$.
Since $f^{\cG^+[k+1,\Delta \ell]} = f^{\cS^{(-d)}} \circ f^{\cG^+[k,\Delta \ell]}$,
by (\ref{eq:centreDyns}) we have
$f^{\cG^+[k,\Delta \ell]} \!\left( x^{\rm int}_{-d} \right) = x^{\rm int}_{-d} - s^{\rm step}_{-d} \zeta_{-d} +
\cO \!\left( \rho_{\rm max}^k \right)$,
where here $\delta = \frac{1}{k^2} + \cO \!\left( \frac{1}{k^3} \right)$.

Linear interpolation of these two values of $f^{\cG^+[k,\Delta \ell]} \!\left( x^{\rm int}_{-d} \right)$ gives
\begin{equation}
f^{\cG^+[k,\Delta \ell]} \!\left( x^{\rm int}_{-d} \right) = x^{\rm int}_{-d} - k^2 \delta s^{\rm step}_{-d} \zeta_{-d} +
\cO \!\left( \frac{1}{k^2} \right),
\label{eq:mainProof20}
\end{equation}
where the error term can be justified from the smoothness of $f^{\cG^+[k,\Delta \ell]}$.
Then
\begin{equation}
\varphi \!\left( u \!\left( f^{\cG^+[k,\Delta \ell]} \!\left( x^{\rm int}_{-d} \right) \right) \right) =
\frac{-k^2 \delta s^{\rm step}_{-d} - h^L_{\Delta \ell}}{h^R_{\Delta \ell} - h^L_{\Delta \ell}} +
\cO \!\left( \frac{1}{k} \right),
\label{eq:mainProof21}
\end{equation}
and so by (\ref{eq:hRhLDiff}) and (\ref{eq:mainProof10}) we obtain
\begin{equation}
g_{\rm lift,approx} \!\left( \frac{-h^L_{\Delta \ell}}{h^R_{\Delta \ell} - h^L_{\Delta \ell}} \right) =
k^2 \delta - \frac{h^L_{\Delta \ell}}{h^R_{\Delta \ell} - h^L_{\Delta \ell}} +
\cO \!\left( \frac{1}{k} \right),
\nonumber
\end{equation}
matching the value of $g_{\rm lift}(z_{\rm sw})$ (\ref{eq:gliftKinkValue}) to leading order.

\myStep{2}
Next we derive a formula that is used below to show that $\Delta k = g_{\rm lift}(z) - g(z)$.

By (\ref{eq:centreDyns}),
$u \!\left( f^{\cS^{(-d)}} (v(h)) \right) = s^{\rm step}_{-d} + h \lambda$.
Using (\ref{eq:hRhLDiff}), since $\lambda = 1 + \cO \!\left( \frac{1}{k} \right)$,
for any $\cO \!\left( \frac{1}{k} \right)$ value of $h$ we can write
\begin{equation}
u \!\left( f^{\cS^{(-d)}}(v(h)) \right) =
h^L_{\Delta \ell} - h^R_{\Delta \ell} + h + \cO \!\left( \frac{1}{k^2} \right).
\nonumber
\end{equation}
Then
\begin{equation}
\varphi \!\left( u \!\left( f^{\cS^{(-d)}}(v(h)) \right) \right) = \varphi(h) - 1 +
\cO \!\left( \frac{1}{k} \right).
\nonumber
\end{equation}
Repeating this result yields the desired formula
\begin{equation}
\varphi(u(T(v(h)))) = \varphi(h) - \Delta k + \cO \!\left( \frac{1}{k} \right),
\label{eq:mainProof30}
\end{equation}
where $T = \left( f^{\cS^{(-d)}} \right)^{\Delta k}$.

\myStep{3}
Define
\begin{equation}
g_{\rm approx} \defeq \varphi \circ G \circ \varphi^{-1} \;,
\label{eq:gApprox}
\end{equation}
where $G$ given by (\ref{eq:tildeg}).
Here we show that
\begin{equation}
g_{\rm approx}(z) = g(z) + \cO \!\left( \frac{1}{k} \right),
\label{eq:gApprox2}
\end{equation}
for all $z \in [0,1) \setminus \Xi_1$, for some set $\Xi_1 \subset [0,1)$
with ${\rm meas}(\Xi_1) = \cO \!\left( \rho_{\rm max}^k \right)$.

By (\ref{eq:tildef2}) and (\ref{eq:tildeg}),
\begin{equation}
g_{\rm approx} = \begin{cases}
\varphi \circ u \circ T \circ f^{\cG^+[k,\Delta \ell]} \circ v \circ \varphi^{-1} \;, &
{\rm on~} \left[ 0, \frac{-h^L_{\Delta \ell}}{h^R_{\Delta \ell} - h^L_{\Delta \ell}} \right], \\
\varphi \circ u \circ T \circ f^{\cG^+[k,\Delta \ell]^{\overline{0}}} \circ v \circ \varphi^{-1} \;, &
{\rm on~} \left[ \frac{-h^L_{\Delta \ell}}{h^R_{\Delta \ell} - h^L_{\Delta \ell}}, 1 \right).
\end{cases}
\label{eq:gApprox3}
\end{equation}
Since $v(u(x)) = x + \cO \!\left( \rho_{\rm max}^k \right)$ for any $x$ located
an $\cO \!\left( \rho_{\rm max}^k \right)$ distance from $W^c$,
we can insert $v \circ u$ into (\ref{eq:gApprox3}) to produce
\begin{equation}
g_{\rm approx} = \begin{cases}
\varphi \circ u \circ T \circ v \circ u \circ f^{\cG^+[k,\Delta \ell]} \circ v \circ \varphi^{-1} +
\cO \!\left( \rho_{\rm max}^k \right), &
{\rm on~} \left[ 0, \frac{-h^L_{\Delta \ell}}{h^R_{\Delta \ell} - h^L_{\Delta \ell}} \right], \\
\varphi \circ u \circ T \circ v \circ u \circ f^{\cG^+[k,\Delta \ell]^{\overline{0}}} \circ v \circ \varphi^{-1} +
\cO \!\left( \rho_{\rm max}^k \right), &
{\rm on~} \left[ \frac{-h^L_{\Delta \ell}}{h^R_{\Delta \ell} - h^L_{\Delta \ell}}, 1 \right).
\end{cases}
\label{eq:gApprox4}
\end{equation}
Then by (\ref{eq:mainProof30}),
$g_{\rm approx} = g_{\rm lift,approx} - \Delta k + \cO \!\left( \frac{1}{k} \right)$, 
and so by (\ref{eq:gliftApprox2}),
\begin{equation}
g_{\rm approx} = g_{\rm lift} - \Delta k + \cO \!\left( \frac{1}{k} \right).
\label{eq:mainProof40}
\end{equation}
But $F \!\left( v \!\left( \varphi^{-1}(z) \right) \right) \in \Phi$ for all $z \in [0,1)$.
Thus $u \!\left( F \!\left( v \!\left( \varphi^{-1}(z) \right) \right) \right) \in
\left[ h^L_{\Delta \ell}, h^R_{\Delta \ell} \right)$ for all $z \in [0,1) \setminus \Xi_1$,
where $\Xi_1 \subset [0,1)$ contains points near the left and right faces of $\Phi$ and
${\rm meas}(\Xi_1) = \cO \!\left( \rho_{\rm max}^k \right)$.
Thus $g_{\rm approx}(z) \in [0,1)$ for all $z \in [0,1) \setminus \Xi_1$.
Therefore
$g_{\rm approx}(z) = g_{\rm lift}(z) {\rm \,mod\,} 1 + \cO \!\left( \frac{1}{k} \right)$
for all $z \in [0,1) \setminus \Xi_1$, which verifies (\ref{eq:gApprox2}).

\myStep{4}
For any $x \in \Phi$, we have $x - v(u(x)) = \cO \!\left( \rho_{\rm max}^k \right)$.
It is straight-forward to show that similarly 
$F(x) - F(v(u(x))) = \cO \!\left( \rho_{\rm max}^k \right)$
because iterates approach $W^c$ under $f^{\cS^{(-d)}}$ and $\lambda^k = \cO(1)$.
Since $F(v(u(x))) \in \Phi$, we also have
$F(v(u(x))) - v(G(u(x))) = \cO \!\left( \rho_{\rm max}^k \right)$.
Thus $F(x) - v(G(u(x))) = \cO \!\left( \rho_{\rm max}^k \right)$.
Thus by (\ref{eq:gApprox}) and (\ref{eq:gApprox2}),
\begin{equation}
F = v \circ \varphi^{-1} \circ g \circ \varphi \circ u + \cO \!\left( \frac{1}{k^2} \right),
\label{eq:mainProof60}
\end{equation}
for all $x \in \Phi$ with $z \in [0,1) \setminus \Xi_1$ where $z = \varphi(u(x))$.

By (\ref{eq:gApprox2}) and (\ref{eq:mainProof40}),
$\Delta k = g_{\rm lift}(z) - g(z)$ for all but a measure $\cO \!\left( \frac{1}{k} \right)$ subset of $[0,1)$
due to the error terms in these expressions.
In view of (\ref{eq:tildef3}),
equation (\ref{eq:F2}) holds unless $z = \varphi(u(x))$ belongs to an $\cO \!\left( \frac{1}{k} \right)$
subset of $[0,1)$ where (\ref{eq:F2}) is not satisfied due to the $\cO \!\left( \frac{1}{k} \right)$ difference
in the kink points $\frac{-h^L_{\Delta \ell}}{h^R_{\Delta \ell} - h^L_{\Delta \ell}}$ and $z_{\rm sw}$.
\end{proof}

\section{Additional formulas}
\label{app:formulas}
\setcounter{equation}{0}


Suppose (\ref{eq:f}) is at an $\cS$-shrinking point where $\cS = \cF[\ell,m,n]$.
For each $j = 0$, $(\ell-1)d$, $\ell d$ and $-d$ (taken modulo $n$),
let $u_j^{\sf T}$ and $v_j$ be the left and right eigenvectors of $M_{\cS^{(j)}}$
corresponding to the eigenvalue $1$ and normalised by $u_j^{\sf T} v_j = 1$ and $e_1^{\sf T} v_j = 1$.
Then for all $\Delta \ell \in \mathbb{Z}$, let
\begin{align}
\kappa^+_{\Delta \ell} &\defeq \begin{cases}
u_{\ell d}^{\sf T} M_{\cS^{\overline{0} (\ell d)}}^{-\Delta \ell-1} v_{(\ell-1)d} \;, &
\Delta \ell \le -1 \;, \\
u_0^{\sf T} M_{\cS^{\overline{\ell d}}}^{\Delta \ell} v_{-d} \;, &
\Delta \ell \ge 0 \;,
\end{cases} \label{eq:kappaPlus} \\
\kappa^-_{\Delta \ell} &\defeq \begin{cases}
u_{-d}^{\sf T} M_{\cS^{\overline{0}}}^{-\Delta \ell} v_0 \;, &
\Delta \ell \le 0 \;, \\
u_{(\ell-1)d}^{\sf T} M_{\cS^{\overline{\ell d} (\ell d)}}^{\Delta \ell - 1} v_{\ell d} \;, &
\Delta \ell \ge 1 \;,
\end{cases} \label{eq:kappaMinus}
\end{align}
and assuming $\kappa^{\pm}_{\Delta \ell} \ne 0$, let
\begin{align}
\theta^+_{\Delta \ell} &\defeq \begin{cases}
\tan^{-1} \!\left( \frac{t_{(\ell+1)d}}
{t_{(\ell-1)d} \left| \kappa^+_{\Delta \ell} \right|} \right), &
\Delta \ell \le -1 \;, \\
\tan^{-1} \!\left( \frac{t_d}
{t_{-d} \left| \kappa^+_{\Delta \ell} \right|} \right), &
\Delta \ell \ge 0 \;,
\end{cases} \label{eq:thetaPlus} \\
\theta^-_{\Delta \ell} &\defeq \begin{cases}
\tan^{-1} \!\left( \frac{t_d \left| \kappa^-_{\Delta \ell} \right|}
{t_{-d}} \right), &
\Delta \ell \le 0 \;, \\
\tan^{-1} \!\left( \frac{t_{(\ell+1)d} \left| \kappa^-_{\Delta \ell} \right|}
{t_{(\ell-1)d}} \right), &
\Delta \ell \ge 1 \;,
\end{cases} \label{eq:thetaMinus}
\end{align}
where $\theta^+_{\Delta \ell} \in \left( \frac{3 \pi}{2}, 2 \pi \right)$
and $\theta^-_{\Delta \ell} \in \left( \frac{\pi}{2}, \pi \right)$ if $a < 0$,
and $\theta^+_{\Delta \ell} \in \left( \frac{\pi}{2}, \pi \right)$
and $\theta^-_{\Delta \ell} \in \left( \frac{3 \pi}{2}, 2 \pi \right)$ if $a > 0$.

\section*{Acknowledgements}
The author thanks James Meiss and Chris Tuffley for invaluable discussions regarding homotopies.

{\small

}


\end{document}